\numberwithin{equation}{section}
\newcommand{\figcaption}[1]{\def\@captype{figure}\caption{#1}}
\newcommand{\tblcaption}[1]{\def\@captype{table}\caption{#1}}
\def\rpkern{\mathchoice{\kern-1.45em}{\kern-1.11em}{}{}}%
\def\grpkern{\mathchoice{\kern-1.013em}{\kern-0.825em}{}{}}%
\def\rpkern{\mathchoice{\kern-1.44em}{\kern-1.11em}{}{}}%
\def\grpkern{\mathchoice{\kern-1.00em}{\kern-0.81em}{}{}}%
\def\rpkern{\mathchoice{\kern-1.472em}{\kern-1.14em}{}{}}%
\def\grpkern{\mathchoice{\kern-1.00em}{\kern-0.815em}{}{}}%
\def\minibullet{\mathchoice%
{\raise0.2ex\hbox{$\scriptstyle\bullet$}}%
{\raise0.26ex\hbox{$\scriptscriptstyle\bullet$}}{}{}}
\def\butabullet{\mathchoice%
{\raise0.8ex\hbox{$\scriptstyle\bullet$}{\kern-0.365em}%
\lower0.4ex\hbox{$\scriptstyle\bullet$}}%
{\raise0.75ex\hbox{$\scriptscriptstyle\bullet$}{\kern-0.335em}%
\lower0.25ex\hbox{$\scriptscriptstyle\bullet$}}{}{}}
\def\customprod#1#2%
\renewcommand{\d}{\delta}
\renewcommand{\Re}{\mathrm{Re}\,}
\renewcommand{\Im}{\mathrm{Im}\,}
\newcommand{\Aut}{\mathop{\mathrm{Aut}}\,}
\newcommand{\tr}{\mathop{\mathrm{tr}}\,}
\renewcommand{\det}{\mathop{\mathrm{det}}\,}
\newtheorem*{multitheorem}{\variable@name}
\theoremstyle{definition}
\newcommand{\variable@name}{Theorem}
\newtheorem*{multiproclaim}{\variable@name}
\theoremstyle{plain}
\newtheorem{thm}{Theorem}[section]
\newtheorem{prop}[thm]{Proposition}
\newtheorem{lem}[thm]{Lemma}
\newtheorem{cor}[thm]{Corollary}
\newtheorem{conj}[thm]{Conjecture}
\newtheorem{prob}[thm]{Problem}
\theoremstyle{definition}
\newtheorem{dfn}[thm]{Definition}
\newtheorem{rmk}[thm]{Remark}
\begin{document}
\title{Multivariate circular Jacobi polynomials}
\author{Genki Shibukawa}
\date{\empty}
\pagestyle{plain}

\maketitle

% \begin{abstract}
% We introduce a new type of multiple zeta functions, which we call bilateral zeta functions.  
% We prove that the bilateral zeta function has a nice Fourier series expansion and the Barnes zeta function can be expressed as a finite sum of bilateral zeta functions. 
% By these properties of the bilateral zeta functions, we obtain simple proofs of some formulas, for example the reflection formula for the multiple gamma function, the inversion formula of the Dedekind $\eta$-function, Ramanujan's formula, Fourier expansion of the Barnes zeta function and multiple Iseki's formula. 
% \end{abstract}

\begin{abstract}
We introduce a new multivariate orthogonal polynomial which is a 2-parameter deformation of the spherical polynomial by harmonic analysis on symmetric cone. 
This is also regarded as a multivariate analogue of the circular Jacobi polynomial. 
%Hence, we call this polynomial the multivariate circular Jacobi (MCJ) polynomial that degenerates to a 1-parameter deformation of the usual circular Jacobi polynomial in the one variable case. 
Further, the weight function of its orthogonality relation coincides with the circular Jacobi ensemble defined by Bourgade et al.. 
We also obtain its main properties\,:\,generating function, pseudo differential equation and determinant expression. 
\end{abstract}

\section{Introduction}
%The one variable circular Jacobi polynomials were defined by R.\,Askey \cite{As} in 1982 as 
The one variable circular Jacobi (orthogonal) polynomials which are named by M.\,Ismail (see \cite{Is}\,p.\,229) are defined by the Gaussian hypergeometric representation as  
\begin{align}
\phi_{m}^{(\alpha)}(x)&:=\frac{(\alpha)_{m}}{m!}{_{2}F_1}\left(\begin{matrix}-m,\frac{\alpha+1}{2}\\{\alpha} \end{matrix};1-x\right) 
=\frac{\left(\frac{\alpha -1}{2}\right)_{m}}{m!}{_{2}F_1}\left(\begin{matrix}-m,\frac{\alpha+1}{2}\\{-m-\frac{\alpha -3}{2}} \end{matrix};x\right) \nonumber \\
&=\frac{(\alpha)_{m}}{m!}\sum_{k=0}^{m}(-1)^{k}\binom{m}{k}\frac{\left(\frac{\alpha+1}{2}\right)_{k}}{(\alpha)_{k}}(1-x)^{k}.
\end{align}
Here, $(\alpha)_{m}:=\frac{\Gamma(\alpha+m)}{\Gamma(\alpha)}=\alpha(\alpha+1)\cdots(\alpha+m-1)$ and $\binom{m}{k}:=(-1)^{k}\frac{(-m)_{k}}{k!}$. 
%and  ${_{2}F_1}$ is the Gauss hypergeometric series. 
For $\alpha >0$, these polynomials $\phi_{m}^{(\alpha)}$ satisfy with the following orthogonality which were given by R.\,Askey \cite{A} in 1982 (he prove more general results).  
\begin{align}
\label{eq:circular Jacobi orthogonality}
\frac{1}{2\pi}\int_{0}^{2\pi}\!\!\!\!\phi_{m}^{(\alpha)}(e^{i\theta})\overline{\phi_{n}^{(\alpha)}(e^{i\theta})}
|(1-e^{i\theta})^{\frac{\alpha-1}{2}}|^{2}\,d\theta
=\frac{\Gamma(\alpha+m)}{m!}\frac{1}{\Gamma(\frac{\alpha+1}{2})^{2}}\delta_{mn}.
\end{align}
% \begin{align}
% \label{eq:circular Jacobi orthogonality}
% \frac{1}{2\pi}&\int_{0}^{2\pi}\!\!\!\!\phi_{m}^{(\alpha)}(e^{i\theta})\overline{\phi_{n}^{(\alpha)}(e^{i\theta})}
% (1-e^{i\theta})^{\frac{\alpha-1}{2}}(1-e^{-i\theta})^{\frac{\alpha-1}{2}}d\theta \nonumber \\
% &=\frac{\Gamma(\alpha+m)}{m!}\frac{1}{\Gamma(\frac{\alpha+1}{2})^{2}}\delta_{mn},\,\,(\alpha >0).
% \end{align}
Li-Chien Shen \cite{She} provided a useful framework for introducing the circular Jacobi polynomials. 
Let us describe Shen's picture %in the one variable case 
more precisely. 
%We note that the real parameter
We put $\mathcal{D}:=\{w \in \mathbb{C}\mid \vert w \vert<1\}$, $H:=\{z \in \mathbb{C}\mid \Im{z}>0\}$, $\Sigma :=\{\sigma \in \mathbb{C} \mid \sigma^{-1}=\overline{\sigma} \}$, $\mu$ is the measure associated with the Riemannian structure on $\Sigma$. 
Further, we consider the following function spaces and their complete orthogonal bases. 
\\
{\bf{(1)}}\,\,$\psi_{m}^{(\alpha)}$\,;\,exponential multiplied by Laguerre polynomials 
\begin{align}
L^{2}_{\alpha}(\mathbb{R}_{>0})&:=\{\psi:\mathbb{R}_{>0} \longrightarrow \mathbb{C} \mid \|\psi\|_{\alpha,\mathbb{R}_{>0}}^{2}<\infty\}, \nonumber \\
\|\psi\|_{\alpha,\mathbb{R}_{>0}}^{2}&:=\frac{2^{\alpha}}{\Gamma(\alpha)}\int_{0}^{\infty}|\psi(u)|^{2}u^{\alpha -1}\,du, \nonumber \\
\psi_{m}^{(\alpha)}(u)&:=e^{-u}L_{m}^{(\alpha-1)}(2u)%=\frac{(\alpha)_{m}}{m!}e^{-u}{_{1}F_1}\left(\begin{matrix}-m\\ \alpha  \end{matrix};2u\right)
=\frac{(\alpha)_{m}}{m!}e^{-u}\sum_{k=0}^{m}(-1)^{k}\binom{m}{k}\frac{1}{(\alpha)_{k}}(2u)^{k}.\nonumber
\end{align}
{\bf{(2)}}\,\,$\Psi_{m}^{(\alpha)}$\,;\,Modified Fourier transform of the Laguerre polynomials
\begin{align}
H^{2}_{\alpha}(\mathbb{R})&:=\left\{\Psi:\mathbb{R} \longrightarrow \mathbb{C} \mid \text{$\|\Psi\|_{\alpha,\mathbb{R}}^{2}<\infty$ and $\Psi$ is continued analytically to $H$} \right. \nonumber \\
{} & \quad \quad \quad \quad \quad \quad \quad \quad \quad \left. \text{as a holomorphic function which satisfies with} \right. \nonumber \\
{} & \quad \quad \quad \quad \quad \quad \quad \quad \quad \left. \sup_{0<y<\infty}\frac{1}{2\pi}\int_{0}^{\infty}|\Psi(x+iy)|^{2}\,dx<\infty \right\}, \nonumber \\
% H^{2}_{\alpha}(\partial{H})&:=\{\Psi:\mathbb{R} \longrightarrow \mathbb{C} \mid \text{$\Psi$ is continued analytically to $H$ as a holomorphic function} \nonumber \\
% {} & \quad \quad \quad \quad \quad \quad \quad \quad \quad  \text{and $\|\Psi\|_{\alpha,\partial{H}}^{2}<\infty$}\}, \nonumber \\
\|\Psi\|_{\alpha,\mathbb{R}}^{2}&:=\frac{\Gamma\left(\frac{\alpha+1}{2}\right)^{2}}{2\pi}\frac{2^{\alpha}}{\Gamma(\alpha)}\int_{-\infty}^{\infty}
|\Psi(t)|^{2}\,dt, \nonumber \\
\Psi_{m}^{(\alpha)}(t)&%:=(1-it)^{-\frac{\alpha +1}{2}}\frac{(\alpha)_{m}}{m!}{_{2}F_1}\left(\begin{matrix}-m,\frac{\alpha+1}{2}\\ \alpha  \end{matrix};\frac{2}{1-it}\right)
:=(1-it)^{-\frac{\alpha +1}{2}}\frac{(\alpha)_{m}}{m!}\sum_{k=0}^{m}(-1)^{k}\binom{m}{k}\frac{\left(\frac{\alpha+1}{2}\right)_{k}}{(\alpha)_{k}}\left(\frac{2}{1-it}\right)^{k}. \nonumber
\end{align}
{\bf{(3)}}\,\,$\phi_{m}^{(\alpha)}$\,;\,circular Jacobi polynomials
\begin{align}
H^{2}_{\alpha}(\Sigma)&:=\{\phi:\Sigma \longrightarrow \mathbb{C} \mid \text{$\phi$ is continued analytically to $\mathcal{D}$ as a holomorphic function} \nonumber \\
{} & \quad \quad \quad \quad \quad \quad \quad \quad \quad  \text{and $\|\phi\|_{\alpha,\Sigma}^{2}<\infty$}\}, \nonumber \\
\|\phi\|_{\alpha,\Sigma}^{2}&:=\frac{\Gamma\left(\frac{\alpha+1}{2}\right)^{2}}{2\pi{i}}\frac{1}{\Gamma(\alpha)}
\int_{\Sigma}|\phi(\sigma)|^{2}|(1-\sigma)^{\frac{\alpha-1}{2}}|^{2}\,d\mu(\sigma), \nonumber \\
\phi_{m}^{(\alpha)}(\sigma)%&:=%e^{-im\frac{\theta}{2}}P_{m}^{(\frac{\alpha}{2})}\left(\frac{1}{2i};\frac{\theta}{2}\right)
%\frac{(\alpha)_{m}}{m!}{_{2}F_1}\left(\begin{matrix}-m,\frac{\alpha+1}{2}\\ \alpha  \end{matrix};1-e^{i\theta}\right)
&:=\frac{(\alpha)_{m}}{m!}\sum_{k=0}^{m}(-1)^{k}\binom{m}{k}\frac{\left(\frac{\alpha+1}{2}\right)_{k}}{(\alpha)_{k}}(1-\sigma)^{k}.\nonumber
\end{align}
We remark that 
\begin{equation}
\|\psi_{m}^{(\alpha)}\|_{\alpha,\mathbb{R}_{>0}}^{2}=\|\Psi_{m}^{(\alpha)}\|_{\alpha,\mathbb{R}}^{2}=\|\phi_{m}^{(\alpha)}\|_{\alpha,\Sigma}^{2}=\frac{(\alpha)_{m}}{m!}. \nonumber
\end{equation}
Furthermore, the following unitary isomorphisms are known. \\
\underline{Modified (inverse) Fourier transform}
\begin{align}
\mathcal{F}_{\alpha}^{-1}:L^{2}_{\alpha}(\mathbb{R}_{>0}) \xrightarrow{\simeq}  H^{2}_{\alpha}(\mathbb{R}),
%\,\,\,(\mathcal{F}_{\alpha}^{-1}\psi)(t):=\frac{(1-it)^{\frac{\alpha+1}{2}}}{\Gamma\left(\frac{\alpha+1}{2}\right)}\int_{-\infty}^{\infty}e^{itu}\psi(u)u^{\frac{\alpha-1}{2}}du.
\,\,\,(\mathcal{F}_{\alpha}^{-1}\psi)(t):=\frac{1}{\Gamma\left(\frac{\alpha+1}{2}\right)}\int_{0}^{\infty}e^{itu}u^{\frac{\alpha-1}{2}}\psi(u)\,du.\nonumber
\end{align}
\underline{Modified Cayley transform}
\begin{align}
\mathcal{C}_{\alpha}^{-1}:H^{2}_{\alpha}(\mathbb{R}) \xrightarrow{\simeq} H^{2}_{\alpha}(\Sigma),
\,\,\,(\mathcal{C}_{\alpha}^{-1}\Psi)(\sigma):=\left(\frac{1-\sigma}{2}\right)^{-\frac{\alpha+1}{2}}\Psi\left(i\frac{1+\sigma}{1-\sigma}\right). \nonumber
\end{align}

To summarize, we can describe the result of Shen. 
\begin{prop}[\cite{She}] 
\label{prop:She}
\begin{align}
\begin{array}{cccccc}
L^{2}_{\alpha}(\mathbb{R}_{\geq 0}) & \xrightarrow[\mathcal{F}_{\alpha}^{-1}]{\simeq} & H^{2}_{\alpha}(\mathbb{R}) & \xrightarrow[\mathcal{C}_{\alpha}^{-1}]{\simeq} & H^{2}_{\alpha}(\Sigma),& \,\,\,(\text{\rm{unitary}}). \nonumber \\
\rotatebox{90}{$\in$}  & & \rotatebox{90}{$\in$} & & \rotatebox{90}{$\in$} & \\ 
\psi_{m}^{(\alpha)} & \longmapsto  & \Psi_{m}^{(\alpha)} & \longmapsto  & \phi_{m}^{(\alpha)} & \\
{\bf{(1)}} &  & {\bf{(2)}} &  & {\bf{(3)}} &
\end{array}
\end{align}
\end{prop}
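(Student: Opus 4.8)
The plan is to separate the two assertions: that the systems $\{\psi_m^{(\alpha)}\}$, $\{\Psi_m^{(\alpha)}\}$, $\{\phi_m^{(\alpha)}\}$ are carried to one another by the two transforms, and that $\mathcal{F}_\alpha^{-1}$ and $\mathcal{C}_\alpha^{-1}$ are unitary. As a preliminary I would record that $\{\psi_m^{(\alpha)}\}_{m\geq 0}$ is a complete orthogonal system in $L^2_\alpha(\mathbb{R}_{>0})$: the substitution $x=2u$ reduces the defining integral to the classical Laguerre orthogonality $\int_0^\infty e^{-x}L_m^{(\alpha-1)}(x)L_n^{(\alpha-1)}(x)x^{\alpha-1}\,dx=\frac{\Gamma(m+\alpha)}{m!}\delta_{mn}$, which simultaneously verifies the stated norm $\frac{(\alpha)_m}{m!}$ and gives completeness from the density of polynomials in the weighted Laguerre space.

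For the Fourier leg I would compute $\mathcal{F}_\alpha^{-1}\psi_m^{(\alpha)}$ termwise from the finite-sum form of $\psi_m^{(\alpha)}$, using the elementary Gamma integral $\int_0^\infty e^{-su}u^{\gamma}\,du=\Gamma(\gamma+1)\,s^{-\gamma-1}$ with $s=1-it$ and $\gamma=\frac{\alpha-1}{2}+k$. The $k$-th term then produces $2^k\,\Gamma(\tfrac{\alpha+1}{2}+k)\,(1-it)^{-\frac{\alpha+1}{2}-k}$; factoring out $(1-it)^{-\frac{\alpha+1}{2}}$, assembling $2^k(1-it)^{-k}$ into $(\tfrac{2}{1-it})^k$, and rewriting $\Gamma(\tfrac{\alpha+1}{2}+k)/\Gamma(\tfrac{\alpha+1}{2})=(\tfrac{\alpha+1}{2})_k$ turns the sum into exactly $\Psi_m^{(\alpha)}$. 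For unitarity I would set $g(u):=u^{\frac{\alpha-1}{2}}\psi(u)$, so that $\psi\in L^2_\alpha(\mathbb{R}_{>0})$ iff $g\in L^2(\mathbb{R}_{>0})$ and $\mathcal{F}_\alpha^{-1}\psi$ is a fixed scalar multiple of the ordinary Fourier transform of $g$ extended by zero to the negative axis. Plancherel then yields the isometry — the prefactors are arranged so that the $2\pi$ and $\Gamma(\tfrac{\alpha+1}{2})^2$ factors cancel against the normalizing constant of $\|\cdot\|_{\alpha,\mathbb{R}}$ — and the Paley--Wiener theorem identifies the image with the Hardy space $H^2(H)$, that is, with $H^2_\alpha(\mathbb{R})$.

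For the Cayley leg I would substitute $t=i\frac{1+\sigma}{1-\sigma}$ and use the two identities $1-it=\frac{2}{1-\sigma}$ and $\frac{2}{1-it}=1-\sigma$. The first shows $(1-it)^{-\frac{\alpha+1}{2}}=(\tfrac{1-\sigma}{2})^{\frac{\alpha+1}{2}}$, which is exactly cancelled by the multiplier $(\tfrac{1-\sigma}{2})^{-\frac{\alpha+1}{2}}$ in $\mathcal{C}_\alpha^{-1}$, and the second sends each $(\tfrac{2}{1-it})^k$ to $(1-\sigma)^k$, giving $\mathcal{C}_\alpha^{-1}\Psi_m^{(\alpha)}=\phi_m^{(\alpha)}$. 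For unitarity I would pass to the boundary parametrization $\sigma=\frac{t-i}{t+i}\in\Sigma$, compute $|1-\sigma|=2/\sqrt{1+t^2}$, and express $d\mu(\sigma)=\frac{2}{1+t^2}\,dt$ (up to the orientation factor absorbed by the $2\pi i$). The decisive point is that the three powers of $1+t^2$ — namely $(1+t^2)^{\frac{\alpha+1}{2}}$ from $|\tfrac{1-\sigma}{2}|^{-(\alpha+1)}$, $(1+t^2)^{-\frac{\alpha-1}{2}}$ from the weight $|(1-\sigma)^{\frac{\alpha-1}{2}}|^2$, and $(1+t^2)^{-1}$ from the Jacobian — sum to $0$, leaving $\int_\Sigma|\phi|^2|(1-\sigma)^{\frac{\alpha-1}{2}}|^2\,d\mu$ proportional to $\int_{\mathbb{R}}|\Psi|^2\,dt$ with constant matching $\|\cdot\|_{\alpha,\mathbb{R}}$.

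The genuinely substantive points, as opposed to bookkeeping, are the two surjectivity statements: that $\mathcal{F}_\alpha^{-1}$ lands on \emph{all} of $H^2_\alpha(\mathbb{R})$ and $\mathcal{C}_\alpha^{-1}$ on all of $H^2_\alpha(\Sigma)$. These I expect to be the main obstacle, and I would establish them structurally rather than through the basis expansion — Paley--Wiener for the first map, and the biholomorphism $H\cong\mathcal{D}$ for the second, under which the nowhere-vanishing multiplier $(\tfrac{1-\sigma}{2})^{-\frac{\alpha+1}{2}}$ identifies the two weighted Hardy spaces. Once the two maps are known to be isometric bijections, the termwise computations above upgrade into the asserted commuting diagram of unitary equivalences sending $\psi_m^{(\alpha)}\mapsto\Psi_m^{(\alpha)}\mapsto\phi_m^{(\alpha)}$.
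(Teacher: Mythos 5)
Your proposal is correct, and all the computations check out (the Gamma-integral evaluation of $\mathcal{F}_{\alpha}^{-1}\psi_m^{(\alpha)}$, the cancellation of the $2\pi$ and $\Gamma(\tfrac{\alpha+1}{2})^2$ factors in the Plancherel step, and the vanishing of the total power of $1+t^2$ in the Cayley step). Note that the paper itself states this proposition without proof, citing Shen; but your argument is essentially the $r=1$, $\nu=0$ specialization of the route the paper takes for its multivariate generalization (Theorem \ref{thm:main theorem1}): factor the modified Fourier transform through the unweighted Plancherel/Paley--Wiener isomorphism of Lemma \ref{thm:Plancherel,[FK] Thm 9.4.1}, compute the image of the basis termwise via the Laplace-transform formula of Lemma \ref{thm:int formula}, and handle the Cayley leg by the substitution together with the weight/Jacobian cancellation of Lemma \ref{thm:int for Shilov boundary}. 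So no substantive discrepancy.
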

This setting is not only beneficial for introducing the above orthogonal systems, but also studying their fundamental properties (orthogonality, generating functions, differential equations). 
% Thus, we hope more extension of this picture and 
% for investigations of multivariate orthogonal polynomial systems, 
% This unitary picture is very beneficial to study of these orthogonal polynomial systems as described above.

The purpose of this article is to provide a multivariate analogue of the results obtained by Shen. 
Namely, we consider a modified Fourier transform of $L^{2}_{\alpha}(\Omega)^{K}$ and multivariate Laguerre polynomials. 
Using this unitary isomorphism and the modified Cayley transform, we introduce some new multivariate special orthogonal polynomials, which are a multivariate  analogue of circular Jacobi polynomials. 
These polynomials, which we call multivariate circular Jacobi (MCJ) polynomials, are generalizations of the spherical (zonal) polynomials that are different from the Jack or Macdonald polynomials, which are well known as an extension of spherical polynomials. 
We also remark that the weight function of their orthogonality relation coincides with the circular Jacobi ensemble defined by Bourgade\,et\,al. \cite{BNR}. 
Furthermore, we provide a generating function for the MCJ polynomials and a differential equation that is satisfied by the modified Cayley transform of the MCJ polynomials. In case of the multiplicity $d=2$, we establish a determinant formula for the MCJ polynomials.  
% The purpose of this article, we provide some new multivariate special orthogonal polynomial systems in the above setting and extension of it. 
% One result is a multivariate analogue of the investigation of Shen. 
% With this one, we introduce  

Let us now describe the content of the following sections. 
The basic definitions and fundamental properties of Jordan algebras and symmetric cones, 
and lemmas for analysis on symmetric cones have been presented in the first subsection of Section\,2, so that they can be referred to later. 
The next subsection presents a compilation of basic facts for the multivariate Laguerre polynomials. 

Based on these preparations, in Section\,3, 
we establish a generalization of Proposition\,\ref{prop:She}. 
In addition, using the generalization, we obtain the MCJ polynomials and their fundamental properties.  

Finally, in Section\,4, we present a conjecture and some problems for a further generalization of the MCJ polynomials.

\section{Preliminaries}
Throughout the paper, we denote the ring of rational integers by $\mathbb{Z}$, 
the field of real numbers by $\mathbb{R}$, the field of complex numbers by $\mathbb{C}$, 
the partition set of length $r$ by $\mathscr{P}$ 
\begin{equation}
\mathscr{P}:=\{\mathbf{m}=(m_{1}, \ldots, m_{r}) \in \mathbb{Z}_{\geq 0}^{r}\mid m_{1}\geq \cdots \geq m_{r}\}.
\end{equation}
For any vector $\mathbf{s}=(s_{1},\ldots,s_{r})\in \mathbb{C}^{r}$, we put 
\begin{align}
\Re{\mathbf{s}}&:=(\Re{s_{1}},\ldots,\Re{s_{r}}), \\
|\mathbf{s}|&:=s_{1}+\cdots+s_{r}, \\
\|\mathbf{s}\|&:=(|s_{1}|,\ldots,|s_{r}|).
\end{align}
Moreover, for $\mathbf{m} \in \mathscr{P}$ 
$$
\mathbf{m}!:=m_{1}!\cdots m_{r}!
$$
and we set $\delta:=(r-1,r-2,\ldots,1,0)$.
% \begin{align}
% \delta&:=(r-1,r-2,\ldots,1,0), \nonumber \\
% \rho&:=(\rho_{1},\ldots,\rho_{r}),\,\,\,\,\rho_{j}:=\frac{d}{4}(2j-r-1). \nonumber 
% \end{align}
Refer to Faraut and Koranyi \cite{FK} for the details in this chapter.

\subsection{Analysis on symmetric cones}
%A reference for this preliminary section is \cite{FK}. 
Let $\Omega$ be an irreducible symmetric cone in $V$ which is a finite dimensional simple Euclidean Jordan algebra of dimension $n$ as a real vector space and rank $r$. 
The classification of irreducible symmetric cones is well-known. 
Namely, there are four families of classical irreducible symmetric cones $\Pi_{r}(\mathbb{R}), \Pi_{r}(\mathbb{C}), \Pi_{r}(\mathbb{H})$, 
the cones of all $r\times r$ positive definite matrices over $\mathbb{R}$, $\mathbb{C}$ and $\mathbb{H}$, the Lorentz cones $\Lambda_{r}$ and an exceptional cone $\Pi_{3}(\mathbb{O})$ (see \cite{FK}\,p.\,97). 
Also, let $V^{\mathbb{C}}:=V+iV$ be its complexification and $T_{\Omega}:=\Omega +iV$, and $H_{\Omega}:=V+i\Omega$. 
We denote the Jordan trace and determinant of the complex Jordan algebra $V^{\mathbb{C}}$ by $\tr{x}$ and by $\Delta(x)$ respectively.

Fix a Jordan frame $\{c_{1},\ldots,c_{r}\}$ that is a complete system of orthogonal primitive idempotents in $V$ and define the following subspaces:
\begin{align}
V_{j}&:=\{x \in V \mid L(c_{j})x=x\}, \nonumber \\
V_{jk}&:=\left\{x \in V \bigg| L(c_{j})x=\frac{1}{2}x \,\,\text{and}\,\, L(c_{k})x=\frac{1}{2}x\right\}. \nonumber
\end{align}
Then, $V_{j}=\mathbb{R}e_{j}$ for $j=1,\ldots,r$ are $1$-dimensional subalgebras of $V$, 
while the subspaces $V_{jk}$ for $j,k=1,\ldots,r$ with $j<k$ all have a common dimension $d=\dim_{\mathbb{R}}V_{jk}$. 
Then, $V$ has the Peirce decomposition 
\begin{equation}
V=\left(\bigoplus_{j=1}^{r}{V_{j}}\right)\oplus\left(\bigoplus_{j<k}{V_{jk}}\right), \nonumber 
\end{equation}
which is the orthogonal direct sum. 
It follows that $n=r+\frac{d}{2}r(r-1)$. 
Let $G(\Omega)$ denote the automorphism group of $\Omega$ and let $G$ be the identity component in $G(\Omega)$. 
Then, $G$ acts transitively on $\Omega$ and $\Omega \cong G/K$ where $K \in G$ is the isotropy subgroup of the unit element, $e \in V$. 
$K$ is also the identity component in $\Aut(V)$.

For any $x \in V$, there exists $k \in K$ and $\lambda_{1},\ldots, \lambda_{r} \in \mathbb{R}$ such that 
\begin{equation}
x={k}\sum_{j=1}^{r}{\lambda_{j}c_{j}},\,\,\,\,(\lambda_{1}\geq \cdots \geq \lambda_{r}). \nonumber 
\end{equation}
% From this polar decomposition, we obtain the following integral formula (see \cite{FK}\,Theorem\,VI.\,2.3).
From this polar decomposition, we obtain the following formula (see \cite{FK}\,Theorem\,VI.\,2.3).
\begin{lem}
\label{thm:int poral decomp}
Let $f$ be an integrable function on V. We have
\begin{equation}
\int_{V}f(x)\,dx=\widetilde{c_{0}}\int_{K\times \mathbb{R}^{r}}f(k\lambda)\prod_{1\leq p<q\leq r}|\lambda_{p}-\lambda_{q}|^{d}\,{dk}{d}\lambda_{1}\cdots{d}\lambda_{r}.
\end{equation}
Here, $dx$ is the Euclidean measure associated with the Euclidean structure on $V$ given by $(u|v)={\rm{tr}}(uv)$,  
$dk$ is the normalized Haar measure on the compact group $K$, $\lambda=\sum_{j=1}^{r}\lambda_{j}c_{j}$ and $\widetilde{c_{0}}$ is defined by 
\begin{equation}
\label{eq:def of c_{0}}
\widetilde{c_{0}}:=(2\pi)^{\frac{n-r}{2}}\prod_{j=1}^{r}\frac{\Gamma\left(\frac{d}{2}+1\right)}{\Gamma\left(\frac{d}{2}j+1\right)}
=\frac{(2\pi)^{\frac{n-r}{2}}}{r!}\prod_{j=1}^{r}\frac{\Gamma\left(\frac{d}{2}\right)}{\Gamma\left(\frac{d}{2}j\right)}.
\end{equation}
In particular, for $f \in L^{1}(V)^{K}$ 
\begin{equation}
\label{eq:int poral decomp}
\int_{V}f(x)\,dx=\widetilde{c_{0}}\int_{\mathbb{R}^{r}}f(\lambda_{1},\ldots,\lambda_{r})\prod_{1\leq p<q\leq r}|\lambda_{p}-\lambda_{q}|^{d}\,{d}\lambda_{1}\cdots{d}\lambda_{r}.
\end{equation}
\end{lem}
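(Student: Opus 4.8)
The plan is to realize the right-hand side as the push-forward of Lebesgue measure under the spectral map and to proceed in two stages: first identify the shape of the Jacobian as $c\prod_{p<q}|\lambda_p-\lambda_q|^d$ for some constant $c$, and then pin down $c=\widetilde{c_0}$ by testing the resulting identity against a single explicit function.

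For the first stage, consider the map $\Phi\colon K\times\mathbb{R}^r\to V$ given by $\Phi(k,\lambda_1,\dots,\lambda_r)=k\sum_{j=1}^r\lambda_j c_j$, which by the polar decomposition recalled above is surjective and generically finite-to-one, the fibres being governed by the stabilizer in $K$ of a regular $\lambda$ together with the Weyl group $S_r$ permuting the $\lambda_j$. I would compute the differential $d\Phi$ at a point $(e,\lambda)$ with distinct $\lambda_j$. The $\partial_{\lambda_j}$ directions contribute the orthonormal vectors $c_j\in V_j$, while a tangent direction $D$ in the Lie algebra $\mathfrak{k}$ of $K$ contributes $D\lambda=\sum_j\lambda_j\,Dc_j$. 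The essential Jordan-theoretic input is that $\mathfrak{k}$ is assembled from derivations attached to the Peirce blocks: a derivation associated with the pair $(j,k)$ annihilates $c_l$ for $l\neq j,k$ and sends $c_j,c_k$ into $V_{jk}$ with $Dc_k=-Dc_j$, so that $D\lambda=(\lambda_j-\lambda_k)\,Dc_j\in V_{jk}$. Hence, with respect to the orthogonal Peirce decomposition $V=\bigoplus_j V_j\oplus\bigoplus_{j<k}V_{jk}$, the matrix of $d\Phi$ is block diagonal, the $V_j$-block being the identity and each $V_{jk}$-block being multiplication by $\lambda_j-\lambda_k$ on a $d$-dimensional space. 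Since $\dim V_{jk}=d$, the Jacobian determinant is $\prod_{j<k}|\lambda_j-\lambda_k|^d$ up to the constant recording the normalization of $dk$ against the stabilizer and the identification $\mathfrak{k}\cong\bigoplus_{j<k}V_{jk}$.

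To fix this constant I would feed the Gaussian $f(x)=e^{-(x|x)/2}$ into the formula. On the spectral locus $(x|x)=\tr(x^2)=\sum_j\lambda_j^2$, so the left-hand side equals $(2\pi)^{n/2}$, while the right-hand side becomes $\widetilde{c_0}\int_{\mathbb{R}^r}e^{-\frac12\sum_j\lambda_j^2}\prod_{p<q}|\lambda_p-\lambda_q|^d\,d\lambda$, the $K$-integral dropping out because $dk$ is normalized and $f$ is $K$-invariant. This last integral is the Mehta–Selberg integral with exponent $d$, equal to $(2\pi)^{r/2}\prod_{j=1}^r\Gamma(1+\tfrac{d}{2}j)/\Gamma(1+\tfrac{d}{2})$, and solving for the constant reproduces exactly the displayed value of $\widetilde{c_0}$. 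The ``in particular'' clause is then immediate: for $f\in L^1(V)^K$ one has $f(k\lambda)=f(\lambda)$, so the normalized Haar integral over $K$ contributes a factor $1$ and the $K$-variable disappears.

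The main obstacle lies in the first stage, specifically in the book-keeping of the constant accompanying the Jacobian: one must verify that the derivations attached to distinct Peirce blocks act independently and that the induced linear isomorphism $\mathfrak{k}\cong\bigoplus_{j<k}V_{jk}$, $D\mapsto(Dc_j)_{j<k}$, is compatible with the chosen inner products and with the Haar normalization, so that no spurious $\lambda$-dependent factor survives beyond $\prod_{j<k}|\lambda_j-\lambda_k|^d$. Once the Jacobian is known to have this shape, the residual ambiguity is a single scalar, which the Gaussian test determines unambiguously; I would therefore absorb all normalization subtleties into that one scalar rather than tracking them through the Lie-theoretic computation.
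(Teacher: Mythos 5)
Your proposal is correct and is essentially the standard argument: the paper itself offers no proof of this lemma, merely citing Faraut--Koranyi, Theorem VI.2.3, whose proof likewise computes the Jacobian of $(k,\lambda)\mapsto k\sum_j\lambda_jc_j$ via the Peirce block structure of $\mathfrak{k}$ and then fixes the normalizing constant. Your Gaussian/Mehta--Selberg test does reproduce $\widetilde{c_0}=(2\pi)^{\frac{n-r}{2}}\prod_{j=1}^{r}\Gamma(\tfrac d2+1)/\Gamma(\tfrac d2 j+1)$ exactly, and absorbing the stabilizer volume and the $S_r$-multiplicity into that single scalar is legitimate once the $\lambda$-independence of the residual factor is established as you describe.
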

As in the case of $V$, we also have the following spectral decomposition for $V^{\mathbb{C}}$. 
Every $z$ in $V^{\mathbb{C}}$ can be written
$$
z=u\sum_{j=1}^{r}\lambda_{j}c_{j},
$$
with $u$ in $U$ which is the identity component of $Str(V^{\mathbb{C}})\cap{U(V^{\mathbb{C}})}$, $\lambda_{1}\geq \cdots\geq \lambda_{r}\geq 0$. 
Moreover, we define the spectral norm of $z \in V^{\mathbb{C}}$ by $|z|=\lambda_{1}$ and introduce open unit ball $\mathcal{D} \in V^{\mathbb{C}}$ as follows. 
\begin{equation}
\mathcal{D}=\{z \in V^{\mathbb{C}} \mid |z|<1\}. \nonumber
\end{equation}

We define $\Sigma$ as the set of invertible elements in $V^{\mathbb{C}}$ such that $z^{-1}=\overline{z}$, which coincides with the Shilov boundary of $\mathcal{D}$. 
For $\Sigma$, the following result is well known (see \cite{FK} Proposition\,X.2.3). 
\begin{lem}
\label{thm:fundamental properties for Shilov boundary}
For $z \in V^{\mathbb{C}}$, the following properties are equivalent:

\noindent
{\rm{(i)}}\,$z \in \Sigma$,

\noindent
{\rm{(i\hspace{-.1em}i)}}\,$z=e^{i\theta}=\sum_{j=1}^{r}e^{i\theta_{j}}c_{j}$ with $\theta=\sum_{j=1}^{r}\theta_{j}c_{j} \in V$,

\noindent
{\rm{(i\hspace{-.1em}i\hspace{-.1em}i)}}\,$z \in \overline{c^{-1}(V)}$,

\noindent
where $c^{-1}(t):=(t-ie)(t+ie)^{-1}=e-2i(t+ie)^{-1}$ is called the inverse Cayley transform. 
% The fact, for any $z \in \Sigma$ there exists $\theta =\sum_{j=1}^{r}\theta_{j}c_{j} \in V$ such that 
% $$
% z=e^{i\theta}=\sum_{j=1}^{r}e^{i\theta_{j}}c_{j}, 
% $$
% is well known
\end{lem}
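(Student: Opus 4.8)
My plan is to prove the cycle of implications (ii)\,$\Rightarrow$\,(i), (i)\,$\Rightarrow$\,(ii), (ii)\,$\Rightarrow$\,(iii), (iii)\,$\Rightarrow$\,(i), from which all three equivalences follow. The easy starting point is (ii)\,$\Rightarrow$\,(i): if $z=\sum_{j=1}^r e^{i\theta_j}c_j$ with $\theta_j\in\mathbb{R}$, then since the $c_j$ are orthogonal idempotents the element $\sum_j e^{-i\theta_j}c_j$ is the Jordan inverse of $z$, while $\overline{z}=\sum_j e^{-i\theta_j}c_j$ because each $c_j$ lies in the real form $V$; hence $z^{-1}=\overline{z}$ and $z\in\Sigma$.

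The substantive direction is (i)\,$\Rightarrow$\,(ii). Starting from $z^{-1}=\overline{z}$, I would first observe that the powers of the single element $z$ generate a commutative, associative subalgebra whose multiplication operators mutually commute, and that this subalgebra contains $z^{-1}$ (express $z^{-1}$ as a polynomial in $z$ via the minimal polynomial, using $\Delta(z)\neq 0$). Setting $a:=\Re z=\frac{1}{2}(z+\overline{z})$ and $b:=\Im z=\frac{1}{2i}(z-\overline{z})$, both lie in this subalgebra and in $V$, so $[L(a),L(b)]=0$. The structure theory of Euclidean Jordan algebras then furnishes a single Jordan frame $\{c_j\}$ with $a=\sum_j a_j c_j$ and $b=\sum_j b_j c_j$ simultaneously, $a_j,b_j\in\mathbb{R}$. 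Since $z\overline{z}=zz^{-1}=e$ and the cross terms cancel by commutativity of the Jordan product, $z\overline{z}=(a+ib)(a-ib)=a^2+b^2$, so $\sum_j(a_j^2+b_j^2)c_j=\sum_j c_j$, forcing $a_j^2+b_j^2=1$. Thus $a_j+ib_j=e^{i\theta_j}$ and $z=\sum_j(a_j+ib_j)c_j=\sum_j e^{i\theta_j}c_j$, which is (ii).

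It remains to bring in (iii). For (ii)\,$\Rightarrow$\,(iii), given $z=\sum_j e^{i\theta_j}c_j$, choose for each index with $e^{i\theta_j}\neq 1$ a real $t_j$ solving $\frac{t_j-i}{t_j+i}=e^{i\theta_j}$ (the map $t\mapsto\frac{t-i}{t+i}$ is a bijection of $\mathbb{R}$ onto the unit circle minus $\{1\}$); then $t:=\sum_j t_j c_j\in V$ gives $c^{-1}(t)=\sum_j\frac{t_j-i}{t_j+i}c_j=z$, and the components with $e^{i\theta_j}=1$ are recovered as $t_j\to\infty$, so $z\in\overline{c^{-1}(V)}$. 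For (iii)\,$\Rightarrow$\,(i), note that for every $t\in V$ one has $\overline{c^{-1}(t)}=(t+ie)(t-ie)^{-1}=c^{-1}(t)^{-1}$ (using $\overline{t}=t$, $\overline{e}=e$), so $c^{-1}(V)\subseteq\Sigma$; because $\Sigma$ is closed (it is cut out by the limit-stable conditions $z\overline{z}=e$ and $z^2\overline{z}=z$), its closure still lies in $\Sigma$, giving (i). Together these close the cycle and yield all equivalences.

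The main obstacle is the structural input in (i)\,$\Rightarrow$\,(ii): that two operator-commuting elements $a,b$ of the Euclidean Jordan algebra $V$ share a common Jordan frame (the simultaneous spectral theorem), together with the preliminary fact that $z^{-1}$ already lies in the commutative subalgebra generated by $z$, which is what makes $a$ and $b$ operator-commute in the first place. Once these are granted, the identity $a^2+b^2=e$ and the passage to $e^{i\theta_j}$ are routine. A minor secondary point is the closure argument in (ii)\,$\Rightarrow$\,(iii), which is needed precisely to capture the phases $e^{i\theta_j}=1$ that the inverse Cayley transform attains only in the limit $t_j\to\infty$.
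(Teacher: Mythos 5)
Your proof is correct, and since the paper offers no proof of this lemma (it simply cites Faraut--Koranyi, Proposition X.2.3), the right comparison is with that source: your argument is essentially the standard one there, resting on the same two inputs, namely that $z^{-1}$ lies in the associative subalgebra generated by $z$ (so $\Re z$ and $\Im z$ operator-commute and admit a common Jordan frame by FK Lemma X.2.2) and the identity $z\overline{z}=a^{2}+b^{2}=e$. The only point worth keeping explicit is the one you already flag: closedness of $\Sigma$ via the characterization of invertibility by the polynomial conditions $z\overline{z}=e$ and $z^{2}\overline{z}=z$, which is what makes the limiting phases $e^{i\theta_{j}}=1$ in (iii)\,$\Rightarrow$\,(i) harmless.
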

We will later need the following integral formula on $\Sigma$ to describe the MCJ polynomials. 
\begin{lem}
\label{thm:int for Shilov boundary}

Let $\mu$ denote the measure associated with the Riemannian structure on $\Sigma$ induced by the Euclidean structure of $V^{\mathbb{C}}$. 

\noindent
{\rm{(1)}}\,If $\phi$ is an integrable function on $\Sigma$, then 
\begin{align}
\label{eq:int for Shilov boundary1}
\int_{\Sigma}\phi(\sigma)\,d\mu(\sigma)=2^{n}\int_{V}\phi(c^{-1}(t))|\Delta(e-it)^{-\frac{n}{r}}|^{2}\,dt.
\end{align}

\noindent
{\rm{(2)}}\,If $\Psi$ is an integrable function on $V$, then 
\begin{equation}
\label{eq:int for Shilov boundary2}
\int_{V}\Psi(t)\,dt=2^{n}\int_{\Sigma}\Psi(c(\sigma))|\Delta(e-\sigma)^{-\frac{n}{r}}|^{2}\,d\mu(\sigma). 
\end{equation}
Here, $c$ is a Cayley transform defined by $c(\sigma):=i(e+\sigma)(e-\sigma)^{-1}=-ie+2i(e-\sigma)^{-1}$.

\noindent
{\rm{(3)}}\,If $\Psi$ is an integrable function on $V$ and a $K$-invariant, then
\begin{equation}
\label{eq:int for Shilov boundary3}
\int_{\Sigma}\phi(\sigma)\,d\mu(\sigma)=\widetilde{c_{0}}\int_{\mathcal{S}^{r}}\phi(e^{i\theta})\prod_{1\leq p<q\leq r}|e^{i\theta_{p}}-e^{i\theta_{q}}|^{d}\,d\theta_{1}\cdots d\theta_{r}.  
\end{equation}
Here, $\mathcal{S}^{r}$ is the direct product of $r$ copies of $S^{1}$. 
\end{lem}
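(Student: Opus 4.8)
The plan is to derive the three formulas from the single change-of-variables statement (1), so the core of the argument is the Jacobian of the inverse Cayley transform $c^{-1}\colon V\to\Sigma$, $c^{-1}(t)=e-2i(t+ie)^{-1}$. First I would record that $c^{-1}$ is a real-analytic diffeomorphism of $V$ onto $\Sigma$ up to a set of $\mu$-measure zero, so that \eqref{eq:int for Shilov boundary1} is equivalent to identifying the pull-back of the Riemannian measure $\mu$ under $c^{-1}$ with $2^{n}|\Delta(e-it)^{-n/r}|^{2}\,dt$. The rank-one model $V=\mathbb{R}$, $\Sigma=S^{1}$, where $c^{-1}(t)=\tfrac{t-i}{t+i}$ and $d\mu=\tfrac{2}{1+t^{2}}\,dt$, is a convenient check on every constant.

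The key computation is this Jacobian. Since $c^{-1}$ is holomorphic and the derivative of Jordan inversion is $D(x\mapsto x^{-1})=-P(x)^{-1}$ (with $P$ the quadratic representation), I get $Dc^{-1}(t)=2i\,P(t+ie)^{-1}\colon V\to T_{\sigma}\Sigma\subset V^{\mathbb{C}}$. Writing the induced metric as $g(u,v)=\operatorname{Re}\bigl(Dc^{-1}(t)\,u\,\big|\,\overline{Dc^{-1}(t)\,v}\bigr)$ for the $\mathbb{C}$-bilinear trace form, and using that $L(t+ie)=L(t)+i$ and $L(t-ie)=L(t)-i$ commute—so that $P(t+ie)$ and $P(t-ie)=\overline{P(t+ie)}$ commute and $A:=P(t+ie)^{-1}P(t-ie)^{-1}$ is a real self-adjoint operator on $V$—the Gram determinant reduces to $4^{n}\Det(A)$. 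Then $\Det P(w)=\Delta(w)^{2n/r}$ gives $\Det(A)=|\Delta(t+ie)|^{-4n/r}$, whence $\sqrt{\det g}=2^{n}|\Delta(t+ie)|^{-2n/r}=2^{n}|\Delta(e-it)^{-n/r}|^{2}$, using $e-it=-i(t+ie)$ and $\Delta(\lambda x)=\lambda^{r}\Delta(x)$. This establishes (1).

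For (2) I would invert the substitution: from $e-\sigma=2i(t+ie)^{-1}$ one gets $e-it=2(e-\sigma)^{-1}$, hence $|\Delta(e-it)^{n/r}|^{2}=2^{2n}|\Delta(e-\sigma)^{-n/r}|^{2}$, and solving \eqref{eq:int for Shilov boundary1} for $dt$ yields \eqref{eq:int for Shilov boundary2}. For (3), assume $\phi$ is $K$-invariant on $\Sigma$. Because $c^{-1}$ is $K$-equivariant and $\Delta$ is $K$-invariant, the integrand $t\mapsto\phi(c^{-1}(t))|\Delta(e-it)^{-n/r}|^{2}$ in (1) is $K$-invariant, so I may apply the polar-decomposition formula \eqref{eq:int poral decomp}. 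On the spectral locus $\lambda=\sum_{j}\lambda_{j}c_{j}$ one has $c^{-1}(\lambda)=\sum_{j}\tfrac{\lambda_{j}-i}{\lambda_{j}+i}c_{j}=e^{i\theta}$ and $\Delta(e-i\lambda)=\prod_{j}(1-i\lambda_{j})$, while the scalar substitution $e^{i\theta_{j}}=\tfrac{\lambda_{j}-i}{\lambda_{j}+i}$ gives $d\lambda_{j}=\tfrac12(1+\lambda_{j}^{2})\,d\theta_{j}$ and $|e^{i\theta_{p}}-e^{i\theta_{q}}|=\tfrac{2|\lambda_{p}-\lambda_{q}|}{\sqrt{(1+\lambda_{p}^{2})(1+\lambda_{q}^{2})}}$. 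Substituting, the powers of $(1+\lambda_{j}^{2})$ cancel because $n/r=1+\tfrac{d}{2}(r-1)$, and all powers of $2$ cancel because $n=r+\tfrac{d}{2}r(r-1)$, leaving exactly \eqref{eq:int for Shilov boundary3}.

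I expect the only real obstacle to be the Jacobian in (1): correctly passing from the complex derivative of the holomorphic map $c^{-1}$ to the real volume distortion on the totally real submanifold $\Sigma$, in particular the commutativity of $P(t+ie)$ and $P(t-ie)$ and the precise normalization of the constant $2^{n}$ and the exponent $-n/r$ through $\Det P(w)=\Delta(w)^{2n/r}$ and $\Delta(\lambda x)=\lambda^{r}\Delta(x)$. Once (1) holds with the right constants, (2) and (3) are bookkeeping.
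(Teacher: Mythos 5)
Your proposal is correct, but it is organized differently from the paper's proof. The paper does not prove (1) and (2) at all: it quotes Proposition X.2.4 of Faraut--Koranyi for \eqref{eq:int for Shilov boundary1} and notes that \eqref{eq:int for Shilov boundary2} follows in the same way, and the only part actually argued is (3). You instead derive (1) from scratch by computing the Jacobian of $c^{-1}$, using $D(x\mapsto x^{-1})=-P(x)^{-1}$, the commutativity of $P(t+ie)$ and $P(t-ie)$ (both are $P(t)\pm 2iL(t)-I$, hence polynomials in commuting operators), and $\mathrm{Det}\,P(w)=\Delta(w)^{2n/r}$; this makes the lemma self-contained and independently confirms the constant $2^{n}$ and the exponent $-n/r$, at the cost of the extra Jordan-algebraic bookkeeping about the real structure of $A=P(t+ie)^{-1}P(t-ie)^{-1}$. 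For (3) the two arguments coincide: both use $c^{-1}(kt)=kc^{-1}(t)$ to reduce via the polar decomposition \eqref{eq:int poral decomp} to an integral over $\mathbb{R}^{r}$, and your substitution $e^{i\theta_{j}}=(\lambda_{j}-i)/(\lambda_{j}+i)$ is exactly the paper's $\lambda_{j}=-\cot(\theta_{j}/2)$ written rationally instead of trigonometrically, with the same cancellation of the powers of $(1+\lambda_{j}^{2})$ and of $2$ through $n=r+\tfrac{d}{2}r(r-1)$.
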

\begin{proof}
{\rm{(1)}} is Proposition\,X.2.4 of \cite{FK} itself and {\rm{(2)}} also immediately follows from some proposition. 
Hence, we only prove {\rm{(3)}}.

Let $\phi \in L^{1}(\Sigma)^{K}$. 
Since for any $k \in K$
$$
c^{-1}(kt)=(k(t-ie))(k(t+ie))^{-1}
%=(k(t-ie))k^{'-1}(t+ie)^{-1}
=k((t+ie)(t-ie)^{-1})
=kc^{-1}(t),
$$
from Lemma\,\ref{thm:int poral decomp}, we have
\begin{align}
\int_{\Sigma}\phi(\sigma)\,d\mu(\sigma)
&=2^{n}\int_{V}\phi(c^{-1}(t))\Delta(e+t^{2})^{-\frac{n}{r}}\,dt \nonumber \\
&=2^{n}\widetilde{c_{0}}\int_{K\times \mathbb{R}^{r}}\phi(c^{-1}(k\lambda ))\Delta(e+(k\lambda)^{2})^{-\frac{n}{r}}\prod_{1\leq p<q\leq r}|\lambda_{p}-\lambda_{q}|^{d}\,{dk}{d}\lambda_{1}\cdots{d}\lambda_{r} \nonumber \\
&=2^{n}\widetilde{c_{0}}\int_{\mathbb{R}^{r}}\phi(c^{-1}(\lambda ))\Delta(e+\lambda^{2})^{-\frac{n}{r}}\prod_{1\leq p<q\leq r}|\lambda_{p}-\lambda_{q}|^{d}\,{d}\lambda_{1}\cdots{d}\lambda_{r}. \nonumber
\end{align}
If we put $\lambda_{j}=-\cot\left(\frac{\theta_{j}}{2}\right)$, then
$$
\lambda =-\sum_{j=1}^{r}\cot\left(\frac{\theta_{j}}{2}\right)c_{j}
=i\sum_{j=1}^{r}\frac{1+e^{i\theta_{j}}}{1-e^{i\theta_{j}}}c_{j}
=i\left(\sum_{j=1}^{r}(1+e^{i\theta_{j}})c_{j}\right)\left(\sum_{l=1}^{r}(1-e^{i\theta_{l}})c_{l}\right)^{-1}
=c(e^{i\theta}).
$$
Therefore, 
\begin{align}
\int_{\Sigma}\phi(\sigma)\,d\mu(\sigma)
&=2^{n-r}\widetilde{c_{0}}\int_{\mathcal{S}^{r}}\phi(e^{i\theta})\prod_{j=1}^{r}\sin\left(\frac{\theta_{j}}{2}\right)^{2\left(\frac{n}{r}-1\right)}\!\!\!\!\!\!\!
\prod_{1\leq p<q\leq r}\left|\frac{\sin\left(\frac{1}{2}(\theta_{p}-\theta_{q})\right)}{\sin\left(\frac{\theta_{p}}{2}\right)\sin\left(\frac{\theta_{q}}{2}\right)}\right|^{d}
\,d\theta_{1}\cdots{d}\theta_{r} \nonumber \\
&=\widetilde{c_{0}}\int_{\mathcal{S}^{r}}\phi(e^{i\theta})\prod_{1\leq p<q\leq r}|e^{i\theta_{p}}-e^{i\theta_{q}}|^{d}\,d\theta_{1}\cdots d\theta_{r}. \nonumber
\end{align}
\end{proof}

For $j=1,\ldots,r$, let $e_{j}:=c_{1}+\cdots+c_{j}$, and set 
\begin{equation}
V^{(j)}:=\{x \in V \mid L(e_{j})x=x\}. \nonumber
\end{equation}
Denote the orthogonal projection of $V$ onto the subalgebra $V^{(j)}$ by $P_{j}$, and define 
\begin{equation}
\Delta_{j}(x):=\delta_{j}(P_{j}x) \nonumber   
\end{equation}
for $x \in V$, where $\delta_{j}$ denotes the Koecher norm function for $V^{(j)}$. 
In particular, $\delta_{r}=\Delta$. 
Then, $\Delta_{j}$ is a polynomial on $V$ that is homogeneous of degree $j$. 
Let $\mathbf{s}:=(s_{1},\ldots,s_{r}) \in \mathbb{C}^{r}$ and define the function $\Delta_{\mathbf{s}}$ on $V$ by 
\begin{equation}
\Delta_{\mathbf{s}}(x):=\Delta(x)^{s_{r}}\prod_{j=1}^{r-1}\Delta_{j}(x)^{s_{j}-s_{j+1}}. 
\end{equation}
Here, we define the branch by $\Delta_{\mathbf{s}}(e)=1$. 
That is the generalized power function on $V$. 
In particular, for $\mathbf{m} \in \mathscr{P}$, $\Delta_{\mathbf{m}}$ becomes a polynomial function on $V$, which is homogeneous of degree $|\mathbf{m}|$. 
Furthermore, $\Delta_{\mathbf{s}}$ can be extended to the function on $V^{\mathbb{C}}$ by analytic continuation. 

The gamma function $\Gamma_{\Omega}$ for the symmetric cone $\Omega$ is defined, for $\mathbf{s} \in \mathbb{C}^{r}$, with $\Re{s_{j}}>\frac{d}{2}(j-1)\,(j=1,\ldots,r)$ by 
\begin{equation}
\Gamma_{\Omega}(\mathbf{s}):=\int_{\Omega}e^{-{\rm{tr}}(x)}\Delta_{\mathbf{s}}(x)\Delta(x)^{-\frac{n}{r}}\,dx.
\end{equation}
Its evaluation gives 
\begin{equation}
\label{eq:def of gamma on cone}
\Gamma_{\Omega}(\mathbf{s})=(2\pi)^{\frac{n-r}{2}}\prod_{j=1}^{r}\Gamma\left(s_{j}-\frac{d}{2}(j-1)\right).
\end{equation}
Hence, $\Gamma_{\Omega}$ extends analytically as a meromorphic function on $\mathbb{C}^{r}$. 
% \begin{lem}
% \label{thm:Laplace tras formula1}
% For any $y \in \Omega, \beta \in \mathbb{C}, \Re{\beta}>0$ and $\Re{s_{j}}>\frac{d}{2}(j-1)$\,{\rm{:}}  
% \begin{equation}
% \label{eq:Laplace tras formula1}
% \int_{\Omega}e^{-(\beta{y}|x)}\Delta_{\mathbf{s}}(x)\Delta(x)^{-\frac{n}{r}}\,dx=\Gamma_{\Omega}(\mathbf{s})\Delta_{\mathbf{s}}((\beta{y})^{-1}).    
% \end{equation}
% \end{lem}

% \begin{lem}
% \label{thm:Laplace tras formula1}
% For any $z \in T_{\Omega}$ and $\Re{s_{j}}>\frac{d}{2}(j-1)$\,{\rm{:}}  
% \begin{equation}
% \label{eq:Laplace tras formula1}
% \int_{\Omega}e^{-(z|x)}\Delta_{\mathbf{s}}(x)\Delta(x)^{-\frac{n}{r}}\,dx=\Gamma_{\Omega}(\mathbf{s})\Delta_{\mathbf{s}}(z^{-1}).    
% \end{equation}
% \end{lem}

For $\mathbf{s} \in \mathbb{C}^{r}$ and $\mathbf{m} \in \mathscr{P}$, we define the generalized shifted factorial by 
\begin{equation}
(\mathbf{s})_{\mathbf{m}}:=\frac{\Gamma_{\Omega}(\mathbf{s}+\mathbf{m})}{\Gamma_{\Omega}(\mathbf{s})}.
\end{equation}
It follows from (\ref{eq:def of gamma on cone}) that 
\begin{equation}
(\mathbf{s})_{\mathbf{m}}=\prod_{j=1}^{r}\left(s_{j}-\frac{d}{2}(j-1)\right)_{m_{j}}.
\end{equation}
% \begin{lem}
% \label{thm:ineq for generalized shifted factorial}
% If $\mathbf{s} \in \mathbb{C}^{r}, \mathbf{m},\mathbf{k} \in \mathscr{P}$ and $\mathbf{m}\supset \mathbf{k}$, then
% \begin{equation}
% \left|\frac{(\mathbf{s})_{\mathbf{m}}}{(\mathbf{s})_{\mathbf{k}}}\right| \leq \frac{(\|\mathbf{s}\|+2\left(\frac{n}{r}-1\right))_{\mathbf{m}}}{(\|\mathbf{s}\|+2\left(\frac{n}{r}-1\right))_{\mathbf{k}}}.
% \end{equation}
% \end{lem}
% \begin{proof}
% We remark that for any $s \in \mathbb{C}, N \in \mathbb{Z}_{\geq 0}$ and $j=1,\ldots,r$, the following is satisfied.
% \begin{equation}
% \label{eq:ineq for generalized shifted factorial prot}
% \left|s+N-\frac{d}{2}(j-1)\right|\leq |s|+N+2\left(\frac{n}{r}-1\right)-\frac{d}{2}(j-1)
% =|s|+N+\frac{d}{2}(2r-j-1). \nonumber
% \end{equation}
% Hence, 
% \begin{align}
% \left|\frac{(\mathbf{s})_{\mathbf{m}}}{(\mathbf{s})_{\mathbf{k}}}\right|
% &=\prod_{j=1}^{r}\left|\left(s_{j}+k_{j}-\frac{d}{2}(j-1)\right)_{m_{j}-k_{j}}\right| \nonumber \\
% &\leq \prod_{j=1}^{r}\left(|s_{j}|+k_{j}+2\left(\frac{n}{r}-1\right)-\frac{d}{2}(j-1)\right)_{m_{j}-k_{j}} \nonumber \\
% &=\frac{\left(\|\mathbf{s}\|+2\left(\frac{n}{r}-1\right)\right)_{\mathbf{m}}}{\left(\|\mathbf{s}\|+2\left(\frac{n}{r}-1\right)\right)_{\mathbf{k}}}. \nonumber 
% \end{align}
% \end{proof}
\begin{lem}
\label{thm:ineq for generalized shifted factorial}
If $\mathbf{s} \in \mathbb{C}^{r}, \mathbf{m},\mathbf{k} \in \mathscr{P}$ and $\mathbf{m}\supset \mathbf{k}$, then
\begin{equation}
\label{eq:ineq for generalized shifted factorial}
\left|\frac{(\mathbf{s})_{\mathbf{m}}}{(\mathbf{s})_{\mathbf{k}}}\right| \leq \frac{(\|\mathbf{s}\|+d(r-1))_{\mathbf{m}}}{(\|\mathbf{s}\|+d(r-1))_{\mathbf{k}}}.
\end{equation}
\end{lem}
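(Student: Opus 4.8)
The plan is to reduce the inequality to a product of elementary scalar inequalities, one factor for each box of the skew diagram $\mathbf{m}/\mathbf{k}$. First I would invoke the explicit product formula $(\mathbf{s})_{\mathbf{m}}=\prod_{j=1}^{r}\left(s_{j}-\frac{d}{2}(j-1)\right)_{m_{j}}$ together with the containment hypothesis $\mathbf{m}\supset\mathbf{k}$ (so that $m_{j}\geq k_{j}$ for every $j$). Since the ordinary Pochhammer symbol satisfies $\frac{(a)_{m}}{(a)_{k}}=\prod_{i=k}^{m-1}(a+i)$ whenever $m\geq k\geq 0$, the ratio factors as
$$\frac{(\mathbf{s})_{\mathbf{m}}}{(\mathbf{s})_{\mathbf{k}}}=\prod_{j=1}^{r}\prod_{i=k_{j}}^{m_{j}-1}\left(s_{j}-\frac{d}{2}(j-1)+i\right),$$
and the same factorization holds for the right-hand side with $s_{j}$ replaced by $|s_{j}|+d(r-1)$.

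Next I would take absolute values and apply the triangle inequality to each factor on the left. This reduces the whole statement to the single scalar inequality
$$\left|s_{j}-\tfrac{d}{2}(j-1)+i\right|\leq |s_{j}|+d(r-1)-\tfrac{d}{2}(j-1)+i$$
for all $1\leq j\leq r$ and all integers $i\geq 0$; note the factors on the right are genuinely non-negative reals, since $d(r-1)-\frac{d}{2}(j-1)\geq\frac{d}{2}(r-1)\geq 0$ because $j\leq r$. Using $\left|s_{j}-\frac{d}{2}(j-1)+i\right|\leq |s_{j}|+\left|i-\frac{d}{2}(j-1)\right|$, it then suffices to prove
$$\left|i-\tfrac{d}{2}(j-1)\right|\leq d(r-1)-\tfrac{d}{2}(j-1)+i.$$

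Finally I would verify this last inequality by a two-case analysis on the sign of $i-\frac{d}{2}(j-1)$. If $i\geq \frac{d}{2}(j-1)$, it reduces to $0\leq d(r-1)$, which is immediate; if $i<\frac{d}{2}(j-1)$, it reduces to $d(j-r)\leq 2i$, which holds because $j\leq r$ makes the left side $\leq 0$. Multiplying the resulting termwise bounds over all $j$ and all $i$ yields the claim. The argument is entirely elementary; the only point requiring a moment's thought is recognizing that the shift $d(r-1)$ is large enough to absorb the subtracted term $\frac{d}{2}(j-1)$ uniformly in $j\leq r$, which is precisely what makes the case $i<\frac{d}{2}(j-1)$ go through and which explains the specific constant appearing in the statement.
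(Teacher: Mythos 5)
Your proposal is correct and follows essentially the same route as the paper: both factor the ratio via $(\mathbf{s})_{\mathbf{m}}=\prod_{j}\bigl(s_{j}-\tfrac{d}{2}(j-1)\bigr)_{m_{j}}$ into linear factors and bound each one by the scalar inequality $\bigl|s_{j}+N-\tfrac{d}{2}(j-1)\bigr|\leq |s_{j}|+N+d(r-1)-\tfrac{d}{2}(j-1)$. The only difference is that you spell out the two-case verification of that scalar inequality (and the non-negativity of the right-hand factors), which the paper leaves as an unproved remark.
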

\begin{proof}
We remark that for any $s \in \mathbb{C}, N \in \mathbb{Z}_{\geq 0}$ and $j=1,\ldots,r$, the following is satisfied.
\begin{equation}
\label{eq:ineq for generalized shifted factorial prot}
\left|s+N-\frac{d}{2}(j-1)\right|\leq |s|+N+d(r-1)-\frac{d}{2}(j-1)
=|s|+N+\frac{d}{2}(2r-j-1). \nonumber
\end{equation}
Hence, 
\begin{align}
\left|\frac{(\mathbf{s})_{\mathbf{m}}}{(\mathbf{s})_{\mathbf{k}}}\right|
&=\prod_{j=1}^{r}\left|\left(s_{j}+k_{j}-\frac{d}{2}(j-1)\right)_{m_{j}-k_{j}}\right| \nonumber \\
&\leq \prod_{j=1}^{r}\left(|s_{j}|+k_{j}+d(r-1)-\frac{d}{2}(j-1)\right)_{m_{j}-k_{j}} \nonumber \\
&=\frac{\left(\|\mathbf{s}\|+d(r-1)\right)_{\mathbf{m}}}{\left(\|\mathbf{s}\|+d(r-1)\right)_{\mathbf{k}}}. \nonumber 
\end{align}
\end{proof}
\begin{cor}
\label{thm:ineq for generalized shifted factorial 2}
If $\mathbf{s} \in \mathbb{C}^{r}, \mathbf{m} \in \mathscr{P}$, then
\begin{equation}
\label{eq:ineq for generalized shifted factorial2}
|(\mathbf{s})_{\mathbf{m}}| \leq (\|\mathbf{s}\|+d(r-1))_{\mathbf{m}} \leq \prod_{j=1}^{r}(|s_{j}|+d(r-1))_{m_{j}}.
\end{equation}
\end{cor}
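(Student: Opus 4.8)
The plan is to deduce both inequalities directly from the already-proved Lemma \ref{thm:ineq for generalized shifted factorial} together with the explicit product formula for the generalized shifted factorial, so this is genuinely a corollary rather than a fresh argument.

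For the left inequality I would specialize Lemma \ref{thm:ineq for generalized shifted factorial} to $\mathbf{k}=\mathbf{0}=(0,\ldots,0)$. Every $\mathbf{m}\in\mathscr{P}$ satisfies $\mathbf{m}\supset\mathbf{0}$ since $m_j\ge 0$, so $\mathbf{0}$ is an admissible choice of $\mathbf{k}$. Moreover $(\mathbf{s})_{\mathbf{0}}=\Gamma_{\Omega}(\mathbf{s})/\Gamma_{\Omega}(\mathbf{s})=1$ and likewise $(\|\mathbf{s}\|+d(r-1))_{\mathbf{0}}=1$, so the bound in the Lemma collapses to exactly $|(\mathbf{s})_{\mathbf{m}}|\le(\|\mathbf{s}\|+d(r-1))_{\mathbf{m}}$. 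Hence the first inequality is immediate.

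For the right inequality I would use the explicit expansion, which gives $(\|\mathbf{s}\|+d(r-1))_{\mathbf{m}}=\prod_{j=1}^{r}\bigl(|s_j|+d(r-1)-\frac{d}{2}(j-1)\bigr)_{m_j}$. I would then observe that for each $j$ the base $a_j:=|s_j|+d(r-1)-\frac{d}{2}(j-1)$ is nonnegative, because $d(r-1)-\frac{d}{2}(j-1)\ge\frac{d}{2}(r-1)\ge 0$ for $1\le j\le r$, and that $a_j\le b_j:=|s_j|+d(r-1)$. Since the one-variable Pochhammer symbol $(a)_{m}=\prod_{l=0}^{m-1}(a+l)$ is a product of nonnegative factors that increases factor-by-factor as the base increases, one has $(a_j)_{m_j}\le(b_j)_{m_j}$, and multiplying over $j$ yields $(\|\mathbf{s}\|+d(r-1))_{\mathbf{m}}\le\prod_{j=1}^{r}(|s_j|+d(r-1))_{m_j}$.

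I do not expect any genuine obstacle: the result is a direct corollary. The only two points deserving a line of care are verifying that $\mathbf{0}$ is a legitimate $\mathbf{k}$ in the Lemma, and confirming the nonnegativity of the bases $a_j$, which is what licenses the termwise monotonicity of the one-variable Pochhammer symbol in the second step.
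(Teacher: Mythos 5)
Your proof is correct and is exactly the argument the paper intends: the corollary is stated immediately after Lemma \ref{thm:ineq for generalized shifted factorial} with no written proof, the first inequality being the case $\mathbf{k}=\mathbf{0}$ of that lemma and the second following from the product formula for $(\mathbf{s})_{\mathbf{m}}$ together with the termwise monotonicity of the one-variable Pochhammer symbol in a nonnegative base. Your two points of care (admissibility of $\mathbf{k}=\mathbf{0}$ and nonnegativity of the bases $a_j$) are both verified correctly.
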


The space, $\mathcal{P}(V)$, of the polynomial ring on $V$ has the following decomposition. 
\begin{equation}
\mathcal{P}(V)=\bigoplus_{\mathbf{m} \in \mathscr{P}}\mathcal{P}_{\mathbf{m}}, \nonumber 
\end{equation}
where each $\mathcal{P}_{\mathbf{m}}$ are mutually inequivalent, and finite dimensional irreducible $G$-modules. 
Further, their dimensions are denoted by $d_{\mathbf{m}}$. 
For $d_{\mathbf{m}}$, the following formula is known (see, \cite{Up}\,Lemma 2.6 or \cite{FK}\,p.\,315). 
\begin{lem}
\label{thm:Upmeier Lem 2.6}
For any $\mathbf{m} \in \mathscr{P}$,
\begin{align}
\label{eq:Upmeier Lem 2.6}
d_{\mathbf{m}}
&=\frac{c(-\rho)}{c(\rho -\mathbf{m})c(\mathbf{m}-\rho)} \\
&=\prod_{1\leq p<q\leq r}\frac{m_{p}-m_{q}+\frac{d}{2}(q-p)}{\frac{d}{2}(q-p)}
\frac{B\left(m_{p}-m_{q},\frac{d}{2}(q-p-1)+1\right)}{B\left(m_{p}-m_{q},\frac{d}{2}(q-p+1)\right)} \\
% &=\frac{\Gamma\left(\frac{d}{2}\right)^{r}}{\Gamma\left(\frac{d}{2}r\right)}\prod_{j=1}^{r-1}\frac{1}{\Gamma\left(\frac{d}{2}j\right)^{2}}
% \prod_{1\leq p<q\leq r}\left(m_{p}-m_{q}+\frac{d}{2}(q-p)\right)\frac{\Gamma\left( m_{p}-m_{q}+\frac{d}{2}(q-p+1)\right)}{\Gamma\left(m_{p}-m_{q}+\frac{d}{2}(q-p-1)+1\right)}.
&=\prod_{j=1}^{r}\frac{\Gamma\left(\frac{d}{2}\right)}{\Gamma\left(\frac{d}{2}j\right)\Gamma\left(\frac{d}{2}(j-1)+1\right)} \nonumber \\
{} & \quad \cdot
\prod_{1\leq p<q\leq r}\left(m_{p}-m_{q}+\frac{d}{2}(q-p)\right)\frac{\Gamma\left( m_{p}-m_{q}+\frac{d}{2}(q-p+1)\right)}{\Gamma\left(m_{p}-m_{q}+\frac{d}{2}(q-p-1)+1\right)}.
\end{align}
Here, $\rho=(\rho_{1},\ldots,\rho_{r})$, $\rho_{j}:=\frac{d}{4}(2j-r-1)$, and $c$ is the Harish-Chandra function:
$$
c(\mathbf{s})=\prod_{1\leq p<q\leq r}\frac{B\left(s_{q}-s_{p}, \frac{d}{2}\right)}{B\left(\frac{d}{2}(q-p), \frac{d}{2}\right)}.
$$
In particular, for $d=2$ 
\begin{equation}
\label{eq:d and Schur}
d_{\mathbf{m}}=\prod_{1\leq p<q\leq r}\left(\frac{m_{p}-m_{q}+q-p}{q-p}\right)^{2}=s_{\mathbf{m}}(1,\ldots,1)^{2}.
\end{equation}
Here, $s_{\mathbf{m}}$ is the Schur polynomial corresponding to $\mathbf{m} \in \mathscr{P}$ defined by 
$$
s_{\mathbf{m}}(\lambda_{1},\ldots,\lambda_{r}):=\frac{\det(\lambda_{j}^{m_{k}+r-k})}{\det(\lambda_{j}^{r-k})}.
$$
\end{lem}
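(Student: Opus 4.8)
The plan is to treat the first equality $d_{\mathbf{m}}=c(-\rho)/\bigl(c(\rho-\mathbf{m})c(\mathbf{m}-\rho)\bigr)$ as the cited input (it is exactly \cite{Up}~Lemma~2.6, equivalently \cite{FK}~p.~315) and to obtain the remaining two closed forms, together with the $d=2$ specialization, by purely algebraic manipulation of the Harish-Chandra $c$-function.

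First I would insert the defining product for $c$ and record the three coordinate differences that enter. Since $\rho_{q}-\rho_{p}=\frac{d}{2}(q-p)$ for $p<q$, one finds $(-\rho)_{q}-(-\rho)_{p}=-\frac{d}{2}(q-p)$, $(\rho-\mathbf{m})_{q}-(\rho-\mathbf{m})_{p}=m_{p}-m_{q}+\frac{d}{2}(q-p)$ and $(\mathbf{m}-\rho)_{q}-(\mathbf{m}-\rho)_{p}=-(m_{p}-m_{q})-\frac{d}{2}(q-p)$. After substitution the normalizing denominators $B(\frac{d}{2}(q-p),\frac{d}{2})$ nearly cancel, and writing $a:=m_{p}-m_{q}\in\mathbb{Z}_{\geq 0}$ and $b:=\frac{d}{2}(q-p)$, the whole problem reduces to the per-pair identity
\[
\frac{B\bigl(-b,\tfrac{d}{2}\bigr)\,B\bigl(b,\tfrac{d}{2}\bigr)}{B\bigl(a+b,\tfrac{d}{2}\bigr)\,B\bigl(-a-b,\tfrac{d}{2}\bigr)}=\frac{a+b}{b}\cdot\frac{B\bigl(a,\,b-\tfrac{d}{2}+1\bigr)}{B\bigl(a,\,b+\tfrac{d}{2}\bigr)},
\]
whose right-hand side is the $(p,q)$-factor of the second displayed formula.

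The crux, and the step needing the most care, is this Beta identity, because the left-hand side carries the negative arguments $-b$ and $-a-b$ coming from $c(-\rho)$ and $c(\mathbf{m}-\rho)$. I would expand every factor via $B(x,y)=\Gamma(x)\Gamma(y)/\Gamma(x+y)$, cancel the $\Gamma(\frac{d}{2})$ factors, and then eliminate the reflected Gammas using either the reflection formula $\Gamma(z)\Gamma(1-z)=\pi/\sin\pi z$ or, to sidestep the poles at integer $b$, the shift identity $\Gamma(z+a)=(z)_{a}\Gamma(z)$; here the hypothesis that $a=m_{p}-m_{q}$ is a \emph{non-negative integer} is essential, since it forces $\sin\pi(a+b)=(-1)^{a}\sin\pi b$ and makes all spurious signs cancel, leaving $\Gamma(b)\Gamma(-b)/\bigl(\Gamma(a+b)\Gamma(-a-b)\bigr)=(-1)^{a}(a+b)/b$ and $\Gamma(d/2-a-b)/\Gamma(d/2-b)=(-1)^{a}\Gamma(b-\frac{d}{2}+1)/\Gamma(a+b-\frac{d}{2}+1)$. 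Reassembling these over all pairs gives the second formula.

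The third formula is then bookkeeping: expanding the two surviving Beta functions back into Gammas splits each pair contribution into the $\mathbf{m}$-dependent factor $(m_{p}-m_{q}+\frac{d}{2}(q-p))\,\Gamma(m_{p}-m_{q}+\frac{d}{2}(q-p+1))/\Gamma(m_{p}-m_{q}+\frac{d}{2}(q-p-1)+1)$ and an $\mathbf{m}$-independent remainder $\frac{1}{b}\Gamma(b-\frac{d}{2}+1)/\Gamma(b+\frac{d}{2})$; collecting the remainders according to the value of $q-p$ and matching the result against the normalizing constant $\prod_{j=1}^{r}\Gamma(\frac{d}{2})/\bigl(\Gamma(\frac{d}{2}j)\Gamma(\frac{d}{2}(j-1)+1)\bigr)$ (which one checks directly, e.g.\ by evaluating both sides at $\mathbf{m}=0$, where $d_{\mathbf{0}}=1$) yields the stated product. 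Finally, for $d=2$ I would set $\frac{d}{2}=1$, so each pair contributes $\frac{a+b}{b}\cdot\frac{B(a,b)}{B(a,b+1)}=\bigl(\frac{a+b}{b}\bigr)^{2}$ after the elementary simplification $B(a,b)/B(a,b+1)=(a+b)/b$; since $\prod_{p<q}\frac{m_{p}-m_{q}+q-p}{q-p}=s_{\mathbf{m}}(1,\ldots,1)$ is the principal specialization of the Schur polynomial, this gives $d_{\mathbf{m}}=s_{\mathbf{m}}(1,\ldots,1)^{2}$.
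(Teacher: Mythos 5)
Your proposal is correct. The paper itself gives no proof of this lemma --- it is stated as a known fact with a pointer to Upmeier's Lemma~2.6 and to Faraut--Koranyi, p.~315 --- so there is no argument in the text to compare against; what you supply is the verification that the three displayed closed forms agree, which the paper leaves entirely to the references. Your reduction is sound: with $a=m_p-m_q$ and $b=\tfrac{d}{2}(q-p)$ the quotient of $c$-functions does collapse to $\prod_{p<q}B(-b,\tfrac d2)B(b,\tfrac d2)/\bigl(B(a+b,\tfrac d2)B(-a-b,\tfrac d2)\bigr)$, and your per-pair identity checks out, e.g.\ via $\Gamma(-b)/\Gamma(-a-b)=(-a-b)_a=(-1)^a(b+1)_a$, which gives $\Gamma(b)\Gamma(-b)/\bigl(\Gamma(a+b)\Gamma(-a-b)\bigr)=(-1)^a(a+b)/b$ without ever touching the poles; the companion sign $(-1)^a$ from $\Gamma(\tfrac d2-a-b)/\Gamma(\tfrac d2-b)$ cancels it as you say. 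The passage to the third form is indeed only the constant identity $\prod_{k=1}^{r-1}\bigl[\Gamma(\tfrac d2(k-1)+1)\Gamma(\tfrac d2 k)/\bigl(\Gamma(\tfrac d2 k+1)\Gamma(\tfrac d2(k+1))\bigr)\bigr]^{r-k}=\prod_{j=1}^{r}\Gamma(\tfrac d2)/\bigl(\Gamma(\tfrac d2 j)\Gamma(\tfrac d2(j-1)+1)\bigr)$, which telescopes (or, equivalently, follows from your normalization check at $\mathbf{m}=0$), and the $d=2$ specialization via $B(a,b)/B(a,b+1)=(a+b)/b$ and the principal specialization of the Schur polynomial is exactly right.
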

The following lemma is necessary to evaluate the Fourier transform of the multivariate Laguerre polynomial.
\begin{lem}[$\cite{FK}$\,Theorem\,XI.\,$2.3$]
\label{thm:int formula}
For $p \in \mathcal{P}_{\mathbf{m}}$, $\Re{\alpha}>(r-1)\frac{d}{2}$, and $z \in T_{\Omega}$,
\begin{equation}
\int_{\Omega}e^{-(z|x)}p(x)\Delta(x)^{\alpha -\frac{n}{r}}\,dx=\Gamma_{\Omega}(\mathbf{m}+\alpha)\Delta(z)^{-\alpha}p(z^{-1}).
\end{equation}
Here, $\alpha$ is regarded as $(\alpha,\ldots,\alpha) \in \mathbb{C}^{r}$. 
\end{lem}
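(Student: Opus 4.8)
The plan is to prove the equivalent identity
$\Delta(z)^{\alpha}\int_{\Omega}e^{-(z|x)}p(x)\Delta(x)^{\alpha-\frac{n}{r}}\,dx=\Gamma_{\Omega}(\mathbf{m}+\alpha)\,p(z^{-1})$
for all $z\in T_{\Omega}$ and $p\in\mathcal{P}_{\mathbf{m}}$. First I would settle convergence and holomorphy. For $\Re{\alpha}>(r-1)\frac{d}{2}$ the factor $\Delta(x)^{\Re\alpha-\frac{n}{r}}$ is integrable near $\partial\Omega$ (the same threshold that makes $\Gamma_{\Omega}$ converge in (\ref{eq:def of gamma on cone})), while for $z\in T_{\Omega}$ the exponential $e^{-(z|x)}$ decays fast enough at infinity to absorb the polynomial growth of $p$; differentiating under the integral sign shows that $I_p(z):=\int_{\Omega}e^{-(z|x)}p(x)\Delta(x)^{\alpha-\frac{n}{r}}\,dx$ is holomorphic on $T_{\Omega}$. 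Since $\Delta(z)\neq0$ and $z\mapsto z^{-1}$ is holomorphic on $T_{\Omega}$, both sides are holomorphic there, so by the identity theorem for tube domains it suffices to prove the identity for $z\in\Omega$.

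Second, I would exploit the covariance under $G=\Aut(\Omega)^{\circ}$, which is self-adjoint, acts transitively on $\Omega$, and whose elements satisfy $\Delta(gx)=\Det(g)^{r/n}\Delta(x)$ with Jacobian $\Det(g)$, together with the structure-group inversion rule $(gx)^{-1}=(g^{*})^{-1}x^{-1}$. Writing $(g\cdot p)(x):=p(g^{-1}x)$, the change of variables $x\mapsto(g^{*})^{-1}y$ gives $I_p(gz)=\Det(g)^{-\alpha r/n}I_{g^{*}\cdot p}(z)$; combined with $\Delta(gz)^{\alpha}=\Det(g)^{\alpha r/n}\Delta(z)^{\alpha}$ and the inversion rule $p((gz)^{-1})=(g^{*}\cdot p)(z^{-1})$, the target identity at an arbitrary $z=ge$ reduces to its value at $z=e$. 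Thus everything comes down to the single scalar identity
\[
\Lambda(q):=\int_{\Omega}e^{-\tr(x)}q(x)\Delta(x)^{\alpha-\frac{n}{r}}\,dx=\Gamma_{\Omega}(\mathbf{m}+\alpha)\,q(e)\qquad(q\in\mathcal{P}_{\mathbf{m}}).
\]

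The key point is then representation-theoretic. Both linear functionals $q\mapsto\Lambda(q)$ and $q\mapsto q(e)$ on $\mathcal{P}_{\mathbf{m}}$ are $K$-invariant: since $K=\Aut(V)^{\circ}$ fixes $e$, preserves $\tr$ and $\Delta$, and has unit Jacobian, one checks $\Lambda(k\cdot q)=\Lambda(q)$ and $(k\cdot q)(e)=q(e)$. Because $\mathcal{P}_{\mathbf{m}}$ is $K$-spherical, i.e. contains the trivial $K$-type exactly once, the space of $K$-invariant functionals on $\mathcal{P}_{\mathbf{m}}$ is one-dimensional, so the two functionals are proportional. I would fix the constant by testing on the highest weight vector $p=\Delta_{\mathbf{m}}\in\mathcal{P}_{\mathbf{m}}$: using the multiplicativity $\Delta_{\mathbf{m}}(x)\Delta(x)^{\alpha}=\Delta_{\mathbf{m}+\alpha}(x)$ of generalized power functions, $\Lambda(\Delta_{\mathbf{m}})=\int_{\Omega}e^{-\tr(x)}\Delta_{\mathbf{m}+\alpha}(x)\Delta(x)^{-\frac{n}{r}}\,dx=\Gamma_{\Omega}(\mathbf{m}+\alpha)$ directly from the definition of $\Gamma_{\Omega}$, while $\Delta_{\mathbf{m}}(e)=1$; hence the proportionality constant is exactly $\Gamma_{\Omega}(\mathbf{m}+\alpha)$, which proves the displayed identity and the lemma.

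I expect the main obstacle to be that last structural input, the multiplicity-one (sphericity) of $\mathcal{P}_{\mathbf{m}}$ as a $K$-module, on which the whole reduction rests; the supporting covariance bookkeeping — self-adjointness of $G$, the transformation laws of $\Delta$ and of the Lebesgue measure, and the inversion rule for structure-group elements — is routine but must be assembled so that every substitution keeps the domain of integration equal to $\Omega$.
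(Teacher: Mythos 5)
The paper does not prove this lemma at all --- it is quoted directly from [FK, Theorem~XI.2.3] --- so there is no internal proof to compare against; your argument is correct and is essentially the standard proof from that source: reduction to $z=e$ via $G$-covariance plus analytic continuation in $z$ over the tube $T_{\Omega}$, the multiplicity-one property of the trivial $K$-type in $\mathcal{P}_{\mathbf{m}}$ to identify the $K$-invariant functional $\Lambda$ with a multiple of evaluation at $e$, and normalization on $p=\Delta_{\mathbf{m}}$ using $\Delta_{\mathbf{m}}(x)\Delta(x)^{\alpha}=\Delta_{\mathbf{m}+\alpha}(x)$ and the defining integral of $\Gamma_{\Omega}$. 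The supporting facts you invoke (self-adjointness of $G$, $\Delta(gx)=\Det(g)^{r/n}\Delta(x)$, $(gx)^{-1}=(g^{*})^{-1}x^{-1}$) are all valid, and the hypothesis $\Re{\alpha}>\frac{d}{2}(r-1)$ is exactly what makes both the integral and $\Gamma_{\Omega}(\mathbf{m}+\alpha)$ converge, so the proof is complete.
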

For each $\mathbf{m} \in \mathscr{P}$, the spherical polynomial of weight $|\mathbf{m}|$ on $\Omega$ is defined by 
\begin{equation}
\Phi_{\mathbf{m}}^{(d)}(x):=\int_{K}\Delta_{\mathbf{m}}(kx)\,dk.
\end{equation}
We often omit multiplicity $d$ of $\Phi_{\mathbf{m}}^{(d)}(x)$. 
The algebra of all $K$-invariant polynomials on $V$, denoted by $\mathcal{P}(V)^{K}$, decomposes as 
\begin{equation}
\mathcal{P}(V)^{K}=\bigoplus_{\mathbf{m} \in \mathscr{P}}\mathbb{C}\Phi_{\mathbf{m}}. \nonumber 
\end{equation}
By analytic continuation to the complexification $V^{\mathbb{C}}$ of $V$, 
we can extend $\tr, \Delta$ and $\Phi_{\mathbf{m}}$ to polynomial functions on $V^{\mathbb{C}}$. 
\begin{rmk}
{\rm(1)}\,Since $\Phi_{\mathbf{m}}\in\mathcal{P}_{\mathbf{m}}^{K}$, for $x=k\sum_{j=1}^{r}\lambda_{j}c_{j}$, $\Phi_{\mathbf{m}}(x)$ can be expressed by
$$
\Phi_{\mathbf{m}}(\lambda_{1},\ldots,\lambda_{r}):=\Phi_{\mathbf{m}}\left(\sum_{j=1}^{r}\lambda_{j}c_{j}\right)(=\Phi_{\mathbf{m}}(x)).
$$
%Further, if $P_{\mathbf{k}}^{(\frac{2}{d})}(\lambda_{1},\ldots,\lambda_{r})$ is an $r$-variable Jack polynomial (see \cite{M}, Chapter. VI.10), then 
$\Phi_{\mathbf{m}}(x)$ also has the following expression (see \cite{F}). 
% \noindent
% {\rm(2)}\,By using an $r$-variable Jack polynomial $P_{\mathbf{k}}^{(\frac{2}{d})}(\lambda_{1},\ldots,\lambda_{r})$ (see \cite{M}, Chapter. VI.10),  
\begin{equation}
\Phi_{\mathbf{k}}^{(d)}(\lambda_{1},\ldots,\lambda_{r})=\frac{P_{\mathbf{k}}^{(\frac{2}{d})}(\lambda_{1},\ldots,\lambda_{r})}{P_{\mathbf{k}}^{(\frac{2}{d})}(1,\ldots,1)}.
\end{equation}
Here, $P_{\mathbf{k}}^{(\frac{2}{d})}(\lambda_{1},\ldots,\lambda_{r})$ is an $r$-variable Jack polynomial (see \cite{M}, Chapter. VI.10). 
In particular, since $P_{\mathbf{k}}^{(1)}(\lambda_{1},\ldots,\lambda_{r})=s_{\mathbf{m}}(\lambda_{1},\ldots,\lambda_{r})$, $\Phi_{\mathbf{m}}^{(2)}$ becomes the Schur polynomial. 
\begin{equation}
\label{eq:spherical and schur}
\Phi_{\mathbf{m}}^{(2)}(\lambda_{1},\ldots,\lambda_{r})=\frac{s_{\mathbf{m}}(\lambda_{1},\ldots,\lambda_{r})}{s_{\mathbf{m}}(1,\ldots,1)}=\frac{\delta!}{\prod_{p<q}(m_{p}-m_{q}+q-p)}s_{\mathbf{m}}(\lambda_{1},\ldots,\lambda_{r}).
\end{equation}
\noindent
{\rm(2)}\,When $r=2$, $\Phi_{\mathbf{m}}^{(d)}$ has the following hypergeometric expression\,(see \cite{Sa}). 
\begin{align}
\Phi_{m_{1},m_{2}}^{(d)}(\lambda_{1},\lambda_{2})
&=\lambda_{1}^{m_{1}}\lambda_{2}^{m_{2}}{_{2}F_1}\left(\begin{matrix}-(m_{1}-m_{2}),\frac{d}{2}\\d \end{matrix};\frac{\lambda_{1}-\lambda_{2}}{\lambda_{1}}\right) \nonumber \\
&=\lambda_{1}^{m_{1}}\lambda_{2}^{m_{2}}\frac{\left(\frac{d}{2}\right)_{m_{1}-m_{2}}}{(d)_{m_{1}-m_{2}}}{_{2}F_1}\left(\begin{matrix}-(m_{1}-m_{2}),\frac{d}{2}\\-(m_{1}-m_{2})-\frac{d}{2}+1 \end{matrix};\frac{\lambda_{2}}{\lambda_{1}}\right). \nonumber 
\end{align}
%In general, if we regard $d$ as continuous real parameter, For Jack polynomials these expression also hold
%If $d$ is continuous real parameter, this formula also holds for Jack polynomial. 
\end{rmk}
We remark that the function $\Phi_{\mathbf{m}}(e+x)$ is a $K$-invariant polynomial of degree $|\mathbf{m}|$ and define the generalized binomial coefficients $\binom{\mathbf{m}}{\mathbf{k}}_{\frac{d}{2}}$ by using the following expansion.
\begin{equation}
\Phi_{\mathbf{m}}^{(d)}(e+x)=\sum_{|\mathbf{k}|\leq |\mathbf{m}|}\binom{\mathbf{m}}{\mathbf{k}}_{\frac{d}{2}}\Phi_{\mathbf{k}}^{(d)}(x).
\end{equation}
For $\binom{\mathbf{m}}{\mathbf{k}}_{\frac{d}{2}}$, we also often omit $\frac{d}{2}$. 
The fact that if $\mathbf{k} \not\subset \mathbf{m}$, then $\binom{\mathbf{m}}{\mathbf{k}}=0$, is well known. 
Hence, we have 
\begin{equation}
\label{eq:the definition of the generalized binomial coefficients}
\Phi_{\mathbf{m}}(e+x)=\sum_{\mathbf{k} \subset \mathbf{m}}\binom{\mathbf{m}}{\mathbf{k}}\Phi_{\mathbf{k}}(x).
\end{equation}
% \begin{lem}
% \label{thm:prop of binom}
% {\rm(1)}\,If $\mathbf{k} \not\subset \mathbf{m}$, then $\binom{\mathbf{m}}{\mathbf{k}}_{d}=0$. 
% Hence, we have
% \begin{equation}
% \Phi_{\mathbf{m}}(e+x)=\sum_{\mathbf{k} \subset \mathbf{m}}\binom{\mathbf{m}}{\mathbf{k}}\Phi_{\mathbf{k}}(x).
% \end{equation}
% {\rm(2)}\,For all $\mathbf{m},\mathbf{k} \in \mathscr{P}$, $\binom{\mathbf{m}}{\mathbf{k}}_{d}\geq 0$.
% \end{lem}

% In the case $d=2$, that is $V=Herm(r,\mathbb{C}),K=U(r)$, there exists the following expression. 
% \begin{equation}
% \binom{\mathbf{m}}{\mathbf{k}}=(-1)^{|\mathbf{k}+\delta|}\frac{(-\mathbf{m}+r-1)_{\mathbf{k}+\delta}}{(\mathbf{k}+\delta)!}
% \prod_{1\leq p<q\leq r}\frac{k_{p}-k_{q}+q-p}{m_{p}-m_{q}+q-p}.
% \end{equation}
%  
% \begin{lem}
% \label{thm:explicit expression of binom}
% {\rm(1)}\,
%\end{lem}

\begin{lem}
\label{thm:FK,Thm12.1.1}
For $z=u\sum_{j=1}^{r}\lambda_{j}c_{j}$ with $u \in U$, $\lambda_{1}\geq \cdots\geq \lambda_{r}\geq 0$ and $\mathbf{m} \in \mathscr{P}$, we have
\begin{equation}
\label{eq:FK,Thm12.1.1}
|\Phi_{\mathbf{m}}(z)|\leq \lambda_{1}^{m_{1}}\cdots \lambda_{r}^{m_{r}}\leq \lambda_{1}^{|\mathbf{m}|}=\Phi_{\mathbf{m}}(\lambda_{1}).
\end{equation}
\end{lem}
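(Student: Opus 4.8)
The plan is to reduce everything first to a bound on the generalized power function $\Delta_{\mathbf{m}}$ and then to the individual conical polynomials $\Delta_j$. I would start from the defining integral $\Phi_{\mathbf{m}}(z)=\int_K\Delta_{\mathbf{m}}(kz)\,dk$ and the triangle inequality to get
\[
|\Phi_{\mathbf{m}}(z)|\le\int_K|\Delta_{\mathbf{m}}(kz)|\,dk .
\]
Since $K\subset U$, for $k\in K$ every $kz$ is again of the form $u'\sum_{j}\lambda_j c_j$ with $u'=ku\in U$ and the \emph{same} spectral values $\lambda_1\ge\cdots\ge\lambda_r\ge0$. Hence it suffices to prove the pointwise estimate $|\Delta_{\mathbf{m}}(w)|\le\lambda_1^{m_1}\cdots\lambda_r^{m_r}$ for every $w=u\sum_j\lambda_j c_j$, $u\in U$; averaging over $K$ then yields the first inequality. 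The remaining two relations are immediate: $\lambda_j\le\lambda_1$ and $m_j\ge0$ give $\prod_j\lambda_j^{m_j}\le\lambda_1^{|\mathbf{m}|}$, while homogeneity of degree $|\mathbf{m}|$ together with $\Phi_{\mathbf{m}}(e)=1$ gives $\lambda_1^{|\mathbf{m}|}=\Phi_{\mathbf{m}}(\lambda_1 e)$.

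For the pointwise estimate I would factor the generalized power function as $\Delta_{\mathbf{m}}(w)=\Delta(w)^{m_r}\prod_{j=1}^{r-1}\Delta_j(w)^{m_j-m_{j+1}}$, which is legitimate because $\mathbf{m}$ is a partition, so all exponents $m_j-m_{j+1}$ and $m_r$ are nonnegative. A direct computation from $\Delta_j=\delta_j\circ P_j$ and $\Delta=\Delta_r$ shows $\Delta_j(\sum_l\lambda_l c_l)=\lambda_1\cdots\lambda_j$, and since $U$ lies in the structure group intersected with the unitary group one has $|\Delta(w)|=\lambda_1\cdots\lambda_r$ (the determinant is multiplied by a unimodular factor). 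Thus everything reduces to the single conical estimate
\[
|\Delta_j(w)|\le\lambda_1\cdots\lambda_j\qquad(1\le j\le r),
\]
after which raising to the nonnegative powers $m_j-m_{j+1}$, respectively $m_r$, and multiplying recovers $|\Delta_{\mathbf{m}}(w)|\le\prod_j\lambda_j^{m_j}$.

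The conical estimate is the heart of the matter and where I expect the real work. My approach is to compress to the subalgebra $V^{(j)}$ with unit $e_j=c_1+\cdots+c_j$ and argue as for principal minors of matrices: one shows $|\Delta_j(w)|^2\le\Delta_j(h)$, where $h$ is the positive element (the Jordan analogue of $w^{*}w$) whose spectral values are $\lambda_1^2,\dots,\lambda_r^2$, the Peirce decomposition of $V^{\mathbb{C}}$ relative to $e_j$ supplying the nonnegative ``off-block'' correction that forces the inequality in the right direction; one then bounds $\Delta_j(h)\le\lambda_1^2\cdots\lambda_j^2$ by the fact that the spectral values of the compression $P_j$ of a positive element are majorized by those of the element. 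Equivalently, one can realize $\Delta_j$ as a matrix coefficient of the $j$-th exterior-type representation of $U$ and bound it by the associated operator norm $\lambda_1\cdots\lambda_j$ via Cauchy--Schwarz. The main obstacle is precisely this compression/majorization step: in the matrix cases it is the classical fact that a $j\times j$ principal minor is dominated in modulus by the product of the $j$ largest singular values, and the difficulty is to phrase and prove it intrinsically in the Euclidean Jordan algebra, using only the Peirce decomposition relative to $e_j$ rather than an ambient matrix model.
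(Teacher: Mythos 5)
The paper never proves this lemma: it is imported verbatim from Faraut--Koranyi (the label itself records it as Theorem~XII.1.1 of \cite{FK}), so there is no in-paper argument to compare you against, only the literature. Judged on its own terms, your reduction is correct and is the standard route. The triangle inequality applied to $\Phi_{\mathbf{m}}(z)=\int_K\Delta_{\mathbf{m}}(kz)\,dk$, the observation that $K\subset U$ so that $kz=(ku)\sum_j\lambda_jc_j$ has the same spectral values as $z$, the factorization $\Delta_{\mathbf{m}}=\Delta^{m_r}\prod_{j<r}\Delta_j^{\,m_j-m_{j+1}}$ with nonnegative exponents because $\mathbf{m}$ is a partition, and the telescoping $\prod_{j}(\lambda_1\cdots\lambda_j)^{m_j-m_{j+1}}=\prod_j\lambda_j^{m_j}$ are all sound, as are the two trailing relations $\prod_j\lambda_j^{m_j}\le\lambda_1^{|\mathbf{m}|}=\Phi_{\mathbf{m}}(\lambda_1e)$.

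The gap is exactly where you locate it: the conical estimate $|\Delta_j(w)|\le\lambda_1\cdots\lambda_j$ for $w=u\sum_l\lambda_lc_l$, $u\in U$, carries the entire content of the lemma (note that the weaker bound $|\Delta_j(w)|\le\lambda_1^{\,j}$, which follows from homogeneity and $|\Delta_j|\le1$ on the closed unit ball, only yields the second inequality $\lambda_1^{|\mathbf{m}|}$, not the first), and you assert it with two candidate strategies rather than prove it. Both sketches are genuinely correct in the matrix cones: there $|\Delta_j(w)|^2\le\Delta_j(w^*w)$ is Cauchy--Binet, $\Delta_j(w^*w)\le\lambda_1^2\cdots\lambda_j^2$ is the majorization of the spectrum of a compression of a positive element, and alternatively $\Delta_j(w)$ is a diagonal matrix coefficient of $\Lambda^jw$, whose operator norm is $\lambda_1\cdots\lambda_j$. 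But neither is stated intrinsically: ``the Jordan analogue of $w^*w$'' and ``the $j$-th exterior-type representation'' are precisely the objects that must be constructed for the Lorentz and exceptional cones, and $w\in V^{\mathbb{C}}$ is not an element of the structure group, so $\pi_j(w)$ has no immediate meaning there. As written, the proof is therefore an honest and correct reduction of \cite{FK}~Theorem~XII.1.1 to its one nontrivial ingredient, which remains unproven; since the paper itself simply cites that theorem, the practical fix is either to do likewise for the conical estimate or to carry out the compression/majorization argument in the Peirce decomposition of $V^{\mathbb{C}}$ relative to $e_j$, which is the step you would still owe.
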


\begin{lem}
\label{thm:Cauchy kernel of spherical poly}
For any $\alpha \in \mathbb{C},z \in \overline{\mathcal{D}},w \in \mathcal{D}$, we have
\begin{equation}
\label{eq:Cauchy kernel of spherical poly}
\sum_{\mathbf{m} \in \mathscr{P}}d_{\mathbf{m}}\frac{(\alpha)_{\mathbf{m}}}{\left(\frac{n}{r}\right)_{\mathbf{m}}}\Phi_{\mathbf{m}}(z)\Phi_{\mathbf{m}}(w)
=\Delta(w)^{-\alpha}\int_{K}\Delta(kw^{-1}-z)^{-\alpha}\,dk.
\end{equation}
\end{lem}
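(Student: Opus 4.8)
The plan is to reduce the right-hand side to a single $K$-average of $\Delta(e-\,\cdot\,)^{-\alpha}$, expand that by the Faraut--Koranyi binomial (Hua) formula, and then collapse the $K$-integral by the product formula for spherical polynomials.

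First I would absorb the prefactor. Since every $k\in K$ is a Jordan algebra automorphism we have $\Delta(k\,\cdot\,)=\Delta(\cdot)$, $k(x^{-1})=(kx)^{-1}$, and $k$ fixes $e$; hence $\Delta(kw^{-1}-z)=\Delta\bigl(k(w^{-1}-k^{-1}z)\bigr)=\Delta(w^{-1}-k^{-1}z)$, and replacing $k$ by $k^{-1}$ and using the invariance of Haar measure gives
\begin{equation*}
\int_{K}\Delta(kw^{-1}-z)^{-\alpha}\,dk=\int_{K}\Delta(w^{-1}-kz)^{-\alpha}\,dk .
\end{equation*}
Assuming first that $w\in\Omega$, so that $w^{1/2}$ and $P(w^{1/2})$ are available, I would factor, using $P(w^{-1/2})e=w^{-1}$, $P(w^{-1/2})P(w^{1/2})=\mathrm{id}$ and $\Delta(P(w^{-1/2})\,\cdot\,)=\Delta(w)^{-1}\Delta(\cdot)$,
\begin{equation*}
\Delta(w^{-1}-kz)=\Delta\bigl(P(w^{-1/2})(e-P(w^{1/2})kz)\bigr)=\Delta(w)^{-1}\Delta\bigl(e-P(w^{1/2})kz\bigr).
\end{equation*}
Raising to the power $-\alpha$ cancels the factor $\Delta(w)^{-\alpha}$, so the right-hand side equals $\int_{K}\Delta(e-P(w^{1/2})kz)^{-\alpha}\,dk$; note that $P(w^{1/2})kz$ is linear in $w$, so this integral is manifestly holomorphic in $w$ on all of $\mathcal{D}$ and free of the apparent singularity at $\Delta(w)=0$.

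Next I would expand by the binomial formula $\Delta(e-u)^{-\alpha}=\sum_{\mathbf m}\frac{d_{\mathbf m}(\alpha)_{\mathbf m}}{(n/r)_{\mathbf m}}\Phi_{\mathbf m}(u)$ (valid for $|u|<1$; see \cite{FK}, and recoverable from Lemma~\ref{thm:int formula} with $p\equiv1$) at $u=P(w^{1/2})kz$, and interchange sum and $K$-integral:
\begin{equation*}
\int_{K}\Delta(e-P(w^{1/2})kz)^{-\alpha}\,dk=\sum_{\mathbf m\in\mathscr P}\frac{d_{\mathbf m}(\alpha)_{\mathbf m}}{(n/r)_{\mathbf m}}\int_{K}\Phi_{\mathbf m}(P(w^{1/2})kz)\,dk .
\end{equation*}
The interchange is justified by dominated convergence: since $|z|\le 1$ and $|w|<1$ one checks $|P(w^{1/2})kz|\le |w|\,|z|<1$ uniformly in $k$, and Lemma~\ref{thm:FK,Thm12.1.1} gives $|\Phi_{\mathbf m}(P(w^{1/2})kz)|\le |w|^{|\mathbf m|}$; combined with the coefficient bound $|(\alpha)_{\mathbf m}|\le(\|\alpha\|+d(r-1))_{\mathbf m}$ of Corollary~\ref{thm:ineq for generalized shifted factorial 2} the tail is dominated by the convergent series one obtains from the binomial formula at the real point $|w|e$. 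Finally the product (functional) formula for the normalized spherical function $\Phi_{\mathbf m}$, namely $\int_{K}\Phi_{\mathbf m}(P(w^{1/2})kz)\,dk=\Phi_{\mathbf m}(w)\Phi_{\mathbf m}(z)$ (whose normalization is fixed by $ke=e$, $P(w^{1/2})e=w$ and $\Phi_{\mathbf m}(e)=1$), collapses each integral and produces exactly the left-hand side.

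The two external inputs are standard consequences of harmonic analysis on $\Omega$. The main obstacle is the bookkeeping of domains: the factorization uses $w^{1/2}$, legitimate a priori only for $w\in\Omega$, so I would establish the identity there and then extend to all $w\in\mathcal{D}$ (and all $\alpha\in\mathbb{C}$, $z\in\overline{\mathcal{D}}$) by analytic continuation, both sides being holomorphic. The delicate point is precisely that the original right-hand side, written with $\Delta(w)^{-\alpha}$ and $w^{-1}$, carries only a removable singularity along $\{\Delta(w)=0\}$, which the reduced integral $\int_K\Delta(e-P(w^{1/2})kz)^{-\alpha}\,dk$ makes manifest and which matches the entire power series on the left.
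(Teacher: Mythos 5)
The paper itself offers no proof of this lemma --- it is quoted as a known fact from Faraut--Koranyi --- so there is no internal argument to compare against. Your strategy is the standard one for this identity: reduce the right-hand side to $\int_{K}\Delta(e-P(w^{1/2})kz)^{-\alpha}\,dk$, expand by the one-variable binomial formula $\Delta(e-u)^{-\alpha}=\sum_{\mathbf m}d_{\mathbf m}\tfrac{(\alpha)_{\mathbf m}}{(n/r)_{\mathbf m}}\Phi_{\mathbf m}(u)$, and collapse the $K$-integral with the functional equation $\int_{K}\Phi_{\mathbf m}(gkx)\,dk=\Phi_{\mathbf m}(ge)\Phi_{\mathbf m}(x)$. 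The reduction, the domination argument (via $|P(w^{1/2})kz|\le|w|\,|z|$, Lemma~\ref{thm:FK,Thm12.1.1} and Corollary~\ref{thm:ineq for generalized shifted factorial 2}), and the use of the product formula are all sound. One caution on sourcing: within this paper the binomial formula only appears as the $\mathbf k=\mathbf 0$ case of Theorem~\ref{thm:spherical Taylor expan lem}, whose proof invokes the present lemma; so you must take it from \cite{FK} directly (as you do), not from the paper, to avoid circularity. Your parenthetical that it is ``recoverable from Lemma~\ref{thm:int formula} with $p\equiv1$'' undersells what is needed --- that only gives the Laplace transform $\int_\Omega e^{-(z|x)}\Delta(x)^{\alpha-n/r}dx=\Gamma_\Omega(\alpha)\Delta(z)^{-\alpha}$; one still needs the spherical expansion of $e^{(u|x)}$ to extract the coefficients.

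The one genuine error is the assertion that ``$P(w^{1/2})kz$ is linear in $w$, so this integral is manifestly holomorphic in $w$ on all of $\mathcal{D}$.'' This is false except in rank one: $P(w^{1/2})e=w$ is linear in $w$, but $P(w^{1/2})y$ for general $y$ is not --- in the matrix cones it is $w^{1/2}yw^{1/2}$, which is neither linear nor polynomial nor single-valued in $w$. Since this claim is what carries your analytic continuation from $w\in\Omega\cap\mathcal{D}$ to all of $\mathcal{D}$ and what removes the singularity along $\{\Delta(w)=0\}$, the extension step is not justified as written. It can be repaired: the correct observation is that $(w,y)\mapsto\Delta(e-P(w^{1/2})y)$ extends from $w\in\Omega$ to a \emph{polynomial} in $(w,y)$ on $V^{\mathbb C}\times V^{\mathbb C}$ (the generic norm of the Jordan pair; e.g.\ $\det(I-w^{1/2}yw^{1/2})=\det(I-yw)$ in the matrix cases), which is nonvanishing for $|w|\,|y|<1$, so that $\int_K\Delta(e-P(w^{1/2})kz)^{-\alpha}dk$ is indeed holomorphic on $\mathcal{D}$ with the branch fixed at $w=0$. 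Alternatively, one may simply observe that the left-hand side is holomorphic on $\mathcal{D}$ and agrees with the right-hand side on the open subset $\Omega\cap\mathcal{D}$ of the real form $V$, which determines the holomorphic extension uniquely; this suffices for every application made in the paper, where the lemma is only invoked at $w\in\Omega\cap(e-\Omega)$.
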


The spherical function, $\varphi_{\mathbf{s}}$, on $\Omega$ for $\mathbf{s} \in \mathbb{C}^{r}$ is defined by 
\begin{equation}
\varphi_{\mathbf{s}}(x):=\int_{K}\Delta_{\mathbf{s}+\rho}(kx)\,dk.  
\end{equation}
We remark that for $x \in \Omega$
\begin{equation}
\varphi_{\mathbf{s}}(x^{-1})=\varphi_{-\mathbf{s}}(x)
\end{equation}
and  for $x \in \Omega, \mathbf{m} \in \mathscr{P}$
\begin{equation}
\Phi_{\mathbf{m}}(x)=\varphi_{\mathbf{m}-\rho}(x).  
\end{equation}

Let $\mathbb{D}(\Omega)$ be the algebra of $G$-invariant differential operators on $\Omega$, 
$\mathcal{P}(V)^{K}$ be the space of $K$-invariant polynomials on $V$, 
and $\mathcal{P}(V\times V)^{G}$ be the space of polynomials on $V\times V$, which are invariant in the sense that
$$
p(gx,\xi)=p(x,g^{*}\xi),\,\,\,\,(g \in G).
$$
Here, we write $g^{*}$ for the adjoint of an element $g$ (i.e., $(gx|y)=(x|g^{*}y)$ for all $x,y \in V$). 
The spherical function $\varphi_{\mathbf{s}}$ is an eigenfunction of every $D \in \mathbb{D}(\Omega)$. 
Thus, we denote its eigenvalues by $\gamma(D)(\mathbf{s})$, that is, $D\varphi_{\mathbf{s}}=\gamma(D)(\mathbf{s})\varphi_{\mathbf{s}}$. 

The symbol $\sigma_{D}$ of a partial differential operator $D$ which acts on the variable $x \in V$ is defined by 
\begin{equation}
De^{(x|\xi)}=\sigma_{D}(x,\xi)e^{(x|\xi)}\,\,\,\,(x,\xi \in V). \nonumber
\end{equation}
Differential operator $D$ on $\Omega$ is invariant under $G$ if and only if its symbol $\sigma_{D}$ belongs to $\mathcal{P}(V\times V)^{G}$. 
In addition, the map $D \mapsto \sigma_{D}$ establishes a linear isomorphism %that is not an algebra isomorphism 
from $\mathbb{D}(\Omega)$ onto $\mathcal{P}(V\times V)^{G}$. 
Moreover, the map $D \mapsto \sigma_{D}(e,u)$ is a vector space isomorphism from $\mathbb{D}(\Omega)$ onto $\mathcal{P}(V)^{K}$. 
% Hence, for all $\mathbf{k} \in \mathscr{P}$, there unique exists $D^{\mathbf{k}} \in \mathbb{D}(\Omega)$ such that
% \begin{equation}
% \sigma_{D^{\mathbf{k}}}(x,e)=\Phi_{\mathbf{k}}(x) \in \mathcal{P}(V)^{K},\,\,\,\text{i.e.}\,\,
% D^{\mathbf{k}}e^{\tr{x}}=\Phi_{\mathbf{k}}(x)e^{\tr{x}}. 
% \end{equation}
%  Since the spherical function $\varphi_{\mathbf{s}}$ is an eigenfunction of every $D \in \mathbb{D}(\Omega)$,  
%  we denote its eigenvalues by $\gamma_{D}(\mathbf{s})$, that is $D\varphi_{\mathbf{s}}=\gamma_{D}(\mathbf{s})\varphi_{\mathbf{s}}$. 
% Moreover, for $\mathbf{s} \in \mathbb{C}^{r}$, we define $\gamma_{\mathbf{k}}(\mathbf{s})$
% \begin{equation}
% \gamma_{\mathbf{k}}(\mathbf{s}):=\gamma(D^{\mathbf{k}})(\mathbf{s})=D^{\mathbf{k}}\varphi_{\mathbf{s}}(x)|_{x=e}.
% \end{equation}
% In the $r=1$ case, $D^{k}=x^{k}\partial_{x}^{k}$, $\gamma_{k}(s)=s(s-1)\cdots(s-k+1)$. 
% We also remark for $\alpha \in \mathbb{C}$, $\gamma_{\mathbf{k}}(\alpha -\rho)=(-1)^{|\mathbf{k}|}(-\alpha)_{\mathbf{k}}$. 
In particular, for $\mathbf{k} \in \mathscr{P}, \mathbf{s} \in \mathbb{C}^{r}$, we put
\begin{equation}
\gamma_{\mathbf{k}}(\mathbf{s}):=\gamma(\Phi_{\mathbf{k}}(\partial_{x}))(\mathbf{s})=\Phi_{\mathbf{k}}(\partial_{x})\varphi_{\mathbf{s}}(x)|_{x=e}.
\end{equation}
Here, $\Phi_{\mathbf{k}}(\partial_{x})$ is a unique $G$-invariant differential operator, which is satisfied with 
$$
\sigma_{\Phi_{\mathbf{k}}(\partial_{x})}(e,\xi)=\Phi_{\mathbf{k}}(\xi) \in \mathcal{P}(V)^{K},
\,\,\,\,\text{i.e.,}\,\,\Phi_{\mathbf{k}}(\partial_{x})e^{(x|\xi)}|_{x=e}=\Phi_{\mathbf{k}}(\xi)e^{\tr{\xi}}.
$$ 
We remark that $\Phi_{k}(\partial_{x})=\partial_{x}^{k}$ and $\gamma_{k}(s)=s(s-1)\cdots(s-k+1)$ in the $r=1$ case, and for any $\alpha \in \mathbb{C}$, $\mathbf{k} \in \mathscr{P}$, we have
\begin{equation}
\label{eq:gamma special case}
\gamma_{\mathbf{k}}(\alpha -\rho)=(-1)^{|\mathbf{k}|}(-\alpha)_{\mathbf{k}}.
\end{equation}   
The function $\gamma_{D}$ is an $r$ variable symmetric polynomial   
and map $D \mapsto \gamma_{D}$ is an algebra isomorphism from $\mathbb{D}(\Omega)$ onto algebra $\mathcal{P}(\mathbb{R}^{r})^{\mathfrak{S}_{r}}$, 
which is a special case of the Harish-Chandra isomorphism.

If a $K$-invariant function $\psi$ is analytic in the neighborhood of $e$, it admits a spherical Taylor expansion near $e$:
\begin{equation}
\psi(e+x)=\sum_{\mathbf{k} \in \mathscr{P}}d_{\mathbf{k}}\frac{1}{\left(\frac{n}{r}\right)_{\mathbf{k}}}\{\Phi_{\mathbf{k}}(\partial_{x})\psi(x)|_{x=e}\}\Phi_{\mathbf{k}}(x). \nonumber
\end{equation}
By the definition of $\gamma_{\mathbf{k}}$, we have
\begin{equation}
\varphi_{\mathbf{s}}(e+x)=\sum_{\mathbf{k} \in \mathscr{P}}d_{\mathbf{k}}\frac{1}{\left(\frac{n}{r}\right)_{\mathbf{k}}}\gamma_{\mathbf{k}}(\mathbf{s})\Phi_{\mathbf{k}}(x). \nonumber
\end{equation}
Since $\Phi_{\mathbf{m}}=\varphi_{\mathbf{m}-\rho}$, 
$$
\binom{\mathbf{m}}{\mathbf{k}}=d_{\mathbf{k}}\frac{1}{\left(\frac{n}{r}\right)_{\mathbf{k}}}\gamma_{\mathbf{k}}(\mathbf{m}-\rho).
$$

For a complex number $\alpha$, we define the following differential operator on $\Omega$:
\begin{equation}
D_{\alpha}=\Delta(x)^{1+\alpha}\Delta(\partial_{x})\Delta(x)^{-\alpha}. \nonumber
\end{equation}
For this operator, we have 
\begin{equation}
\gamma(D_{\alpha})(\mathbf{s})=\prod_{j=1}^{r}\left(s_{j}-\alpha+\frac{d}{4}(r-1)\right).
\end{equation}
The operators $D_{j\frac{d}{2}},\,j=0,\ldots,r-1$ generate algebra $\mathbb{D}(\Omega)$. 
\begin{lem}
\label{thm:estimate of shifted Jack1}
For all $\mathbf{k} \in \mathscr{P}$, there exist some constant $C>0$ and integer $N$ such that for any $\mathbf{s} \in \mathbb{C}^{r}$
% \begin{align}
% \label{eq:estimate of shifted Jack1}
% |\gamma_{\mathbf{k}}(\mathbf{s})|&\leq C\prod_{l=1}^{r}\left(|s_{l}|+\frac{d}{4}(r-1)\right)^{N}.  \\
% \label{eq:cor of estimate of shifted Jack1}
% |\gamma_{\mathbf{k}}(\mathbf{s}-\rho)|&\leq C\prod_{l=1}^{r}\left(|s_{l}|+\frac{d}{2}(r-1)\right)^{N}.
% \end{align}
\begin{align}
\label{eq:estimate of shifted Jack1}
|\gamma_{\mathbf{k}}(\mathbf{s})|&\leq C\prod_{l=1}^{r}\left(|s_{l}|+\frac{d}{4}(r-1)\right)^{N}.
\end{align}
\end{lem}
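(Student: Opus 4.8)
The plan is to deduce the bound from the single structural fact that $\gamma_{\mathbf{k}}$ is a \emph{polynomial} in $\mathbf{s}$; everything else is an elementary majorization of monomials. By the Harish--Chandra isomorphism recalled above, the map $D\mapsto\gamma_{D}$ is an algebra isomorphism of $\mathbb{D}(\Omega)$ onto $\mathcal{P}(\mathbb{R}^{r})^{\mathfrak{S}_{r}}$. Since $\Phi_{\mathbf{k}}(\partial_{x})\in\mathbb{D}(\Omega)$, its eigenvalue $\gamma_{\mathbf{k}}=\gamma(\Phi_{\mathbf{k}}(\partial_{x}))$ is therefore a symmetric polynomial in $s_{1},\dots,s_{r}$. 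Moreover its degree is $|\mathbf{k}|$: the symbol $\sigma_{\Phi_{\mathbf{k}}(\partial_{x})}(e,\xi)=\Phi_{\mathbf{k}}(\xi)$ is homogeneous of degree $|\mathbf{k}|$ in $\xi$, so $\Phi_{\mathbf{k}}(\partial_{x})$ has order $|\mathbf{k}|$, and the Harish--Chandra isomorphism carries the filtration by operator order to the filtration by polynomial degree. For the estimate one only needs that \emph{some} finite degree exists, so the bare polynomiality already suffices; the degree count merely lets one take $N=|\mathbf{k}|$.

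Granting this, write $\gamma_{\mathbf{k}}(\mathbf{s})=\sum_{|\alpha|\le|\mathbf{k}|}a_{\alpha}\mathbf{s}^{\alpha}$, a finite sum over multi-indices $\alpha=(\alpha_{1},\dots,\alpha_{r})\in\mathbb{Z}_{\geq0}^{r}$, where $\mathbf{s}^{\alpha}:=\prod_{l}s_{l}^{\alpha_{l}}$. For each term, since $\alpha_{l}\le|\alpha|\le|\mathbf{k}|$ and $1+|s_{l}|\ge1$,
\begin{equation}
|\mathbf{s}^{\alpha}|=\prod_{l=1}^{r}|s_{l}|^{\alpha_{l}}\le\prod_{l=1}^{r}(1+|s_{l}|)^{|\mathbf{k}|}. \nonumber
\end{equation}
Set $c:=\frac{d}{4}(r-1)$, which is strictly positive once $r\ge2$. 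A direct check gives $1+|s_{l}|\le\max(1,c^{-1})(c+|s_{l}|)$ for every $l$, whence
\begin{equation}
|\gamma_{\mathbf{k}}(\mathbf{s})|\le\Bigl(\sum_{|\alpha|\le|\mathbf{k}|}|a_{\alpha}|\Bigr)\max(1,c^{-1})^{r|\mathbf{k}|}\prod_{l=1}^{r}\Bigl(|s_{l}|+\tfrac{d}{4}(r-1)\Bigr)^{|\mathbf{k}|}. \nonumber
\end{equation}
This is exactly the asserted inequality with $N=|\mathbf{k}|$ and $C=\bigl(\sum_{\alpha}|a_{\alpha}|\bigr)\max(1,c^{-1})^{r|\mathbf{k}|}$, which depend on $\mathbf{k}$ and on the cone but not on $\mathbf{s}$.

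The only real content is the first paragraph, namely confirming that $\gamma_{\mathbf{k}}$ is a polynomial of controlled degree; this is where I rely on the Harish--Chandra isomorphism and on $\Phi_{\mathbf{k}}(\partial_{x})$ having order $|\mathbf{k}|$. The single point to watch in the second paragraph is that the offset $\frac{d}{4}(r-1)$ be positive, so that each factor $|s_{l}|+\frac{d}{4}(r-1)$ stays bounded away from $0$ and the monomial majorization is uniform in $\mathbf{s}$; this holds for all the higher-rank cones ($r\ge2$), the rank-one case being the classical one-variable situation already treated by Shen.
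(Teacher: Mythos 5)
Your proof is correct for $r\ge 2$, but it takes a genuinely different route from the paper's. The paper argues through the explicit generators of $\mathbb{D}(\Omega)$: it writes $\Phi_{\mathbf{k}}(\partial_{x})$ as a finite polynomial in the operators $D_{j\frac{d}{2}}$, $j=0,\dots,r-1$, uses the closed formula $\gamma(D_{j\frac{d}{2}})(\mathbf{s})=\prod_{l}\bigl(s_{l}-j\frac{d}{2}+\frac{d}{4}(r-1)\bigr)$, and observes that each such factor is bounded by $|s_{l}|+\frac{d}{4}(r-1)$ precisely because the shift $j\frac{d}{2}$ lies in $[0,\frac{d}{2}(r-1)]$; the bound then falls out with the stated offset built in from the start. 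You instead use only the bare fact that $\gamma_{\mathbf{k}}$ is a polynomial (via the Harish--Chandra isomorphism), majorize each monomial by $\prod_{l}(1+|s_{l}|)^{|\mathbf{k}|}$, and trade $1+|s_{l}|$ for $|s_{l}|+\frac{d}{4}(r-1)$ at the cost of the factor $\max(1,c^{-1})^{r|\mathbf{k}|}$. Your argument is more elementary and more robust (it would bound any polynomial this way, and it pins down $N=|\mathbf{k}|$, though as you note the degree count is not needed); the paper's argument is tied to the specific generators but produces the offset $\frac{d}{4}(r-1)$ without any case distinction on its size. Both proofs are valid where the lemma is meaningful.

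One remark on the point you flag at the end: the positivity of $c=\frac{d}{4}(r-1)$ is not merely a convenience of your method --- when $r=1$ the right-hand side of \eqref{eq:estimate of shifted Jack1} degenerates to $C|s_{1}|^{N}$, and no single $N$ can dominate $\gamma_{k}(s)=s(s-1)\cdots(s-k+1)$ for $k\ge 2$ on all of $\mathbb{C}$ (the behaviour at $s=0$ forces $N\le 1$ while the behaviour at infinity forces $N\ge k$). So the statement itself, not just your majorization, requires $r\ge 2$; the paper's proof silently has the same blind spot (its final absorption of lower-degree terms into $C\prod_{l}(|s_{l}|+\frac{d}{4}(r-1))^{N}$ also needs the offset to be bounded away from zero). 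You were right to isolate this as the one point to watch, and deferring $r=1$ to the classical one-variable theory is the appropriate resolution.
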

\begin{proof}
Since algebra $\mathbb{D}(\Omega)$ is generated by $D_{j\frac{d}{2}},\,j=0,\ldots,r-1$, 
for $\Phi_{\mathbf{k}}(\partial_{x}) \in \mathbb{D}(\Omega)$,  
% $$
% \Phi_{\mathbf{k}}(\partial_{x})=\sum_{j=1}^{r}\sum_{k_{j};\text{finite}}a_{k_{j}}D_{j\frac{d}{2}}^{k_{j}}.
% $$ 
$$
\Phi_{\mathbf{k}}(\partial_{x})=\sum_{l_{0},\ldots,l_{r-1};{\text{finite}}}
a_{l_{0},\ldots,l_{r-1}}D_{0\frac{d}{2}}^{l_{0}} \cdots D_{(r-1)\frac{d}{2}}^{l_{r-1}}.
$$
Here, we remark that for $j=0,\ldots,r-1$
\begin{align}
|\gamma(D_{\frac{d}{2}(j-1)})(\mathbf{s})|
=\left|\prod_{l=1}^{r}\left(s_{l}+\frac{d}{4}(r-1)-\frac{d}{2}(j-1)\right)\right| %\nonumber \\
\leq \prod_{l=1}^{r}\left(|s_{l}|+\frac{d}{4}(r-1)\right). \nonumber
% &\leq \prod_{l=1}^{r}\left(|s_{l}|+\frac{d}{4}(r-1)+2\left(\frac{n}{r}-1\right)-\frac{d}{2}(j-1)\right) \nonumber \\
% &=\gamma(D_{\frac{d}{2}(j-1)})(\|\mathbf{s}\|+d(r-1)). \nonumber
\end{align}
Therefore, 
% \begin{align}
% |\gamma_{\mathbf{k}}(\mathbf{s})|
% &\leq \sum_{j=1}^{r}\sum_{k_{j};\text{finite}}|a_{k_{j}}||\gamma(D_{j\frac{d}{2}})(\mathbf{s})|^{k_{j}} \nonumber \\
% &\leq C\gamma(D_{j\frac{d}{2}})(\|\mathbf{s}\|+d(r-1))^{N} \nonumber \\
% &\leq 
% \end{align}
\begin{align}
|\gamma_{\mathbf{k}}(\mathbf{s})|
\leq \sum_{l_{0},\ldots,l_{r-1};{\text{finite}}}
|a_{l_{0},\ldots,l_{r-1}}|\gamma(D_{0\frac{d}{2}})(\mathbf{s})^{l_{0}} \cdots \gamma(D_{(r-1)\frac{d}{2}})(\mathbf{s})^{l_{r-1}}
\leq C\prod_{l=1}^{r}\left(|s_{l}|+\frac{d}{4}(r-1)\right)^{N}. \nonumber
\end{align}
%We immediately derive (\ref{eq:cor of estimate of shifted Jack1}) from (\ref{eq:estimate of shifted Jack1}).
\end{proof}
\begin{lem}
\label{thm:positivity of shifted Jack}
For all $\mathbf{m},\mathbf{k} \in \mathscr{P}$, we have
\begin{equation}
\gamma_{\mathbf{k}}(\mathbf{m}-\rho)\geq 0.
\end{equation}
\end{lem}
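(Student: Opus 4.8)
The plan is to reduce the inequality to the nonnegativity of the generalized binomial coefficients and then settle the latter. First I would use the identity
\[
\binom{\mathbf{m}}{\mathbf{k}}=d_{\mathbf{k}}\frac{1}{\left(\frac{n}{r}\right)_{\mathbf{k}}}\gamma_{\mathbf{k}}(\mathbf{m}-\rho)
\]
recorded just above the statement, and observe that $\gamma_{\mathbf{k}}(\mathbf{m}-\rho)$ and $\binom{\mathbf{m}}{\mathbf{k}}$ differ only by the strictly positive factor $d_{\mathbf{k}}/\left(\frac{n}{r}\right)_{\mathbf{k}}$. Indeed $d_{\mathbf{k}}>0$, since it is the dimension of the nonzero irreducible $G$-module $\mathcal{P}_{\mathbf{k}}$ (equivalently, every factor in the product formula of Lemma~\ref{thm:Upmeier Lem 2.6} is positive once $\mathbf{k}\in\mathscr{P}$, because $k_{p}-k_{q}\geq 0$ and $\frac{d}{2}(q-p)>0$ for $p<q$). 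For the denominator, using $\frac{n}{r}=1+\frac{d}{2}(r-1)$ I get
\[
\left(\tfrac{n}{r}\right)_{\mathbf{k}}=\prod_{j=1}^{r}\left(1+\tfrac{d}{2}(r-j)\right)_{k_{j}}>0,
\]
each Pochhammer factor being a product of numbers $\geq 1$. Hence it suffices to prove $\binom{\mathbf{m}}{\mathbf{k}}\geq 0$ for all $\mathbf{m},\mathbf{k}\in\mathscr{P}$.

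Since $\binom{\mathbf{m}}{\mathbf{k}}=0$ whenever $\mathbf{k}\not\subset\mathbf{m}$, I may assume $\mathbf{k}\subset\mathbf{m}$. To get at the sign I would start from the defining expansion (\ref{eq:the definition of the generalized binomial coefficients}) together with the integral representation $\Phi_{\mathbf{m}}(x)=\int_{K}\Delta_{\mathbf{m}}(kx)\,dk$. Because $K$ is the isotropy group of $e$, this gives
\[
\Phi_{\mathbf{m}}(e+x)=\int_{K}\Delta_{\mathbf{m}}(e+kx)\,dk,
\]
which recasts the problem as the binomial expansion of the generalized power function $\Delta_{\mathbf{m}}(e+z)$ into spherical polynomials. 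The goal is then to show that this expansion has nonnegative coefficients; equivalently, that $\gamma_{\mathbf{k}}(\mathbf{m}-\rho)$, which is a value $P^{*}_{\mathbf{k}}(\mathbf{m})$ of the shifted Jack polynomial at the partition point $\mathbf{m}$ up to a positive normalization, is nonnegative.

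The positivity of these coefficients is the genuine content and the main obstacle: it does \emph{not} follow from the mere positivity of $\Phi_{\mathbf{m}}(e+x)$ on the cone, since positivity of a function expressed in a positive basis says nothing a priori about the signs of its individual coordinates. I would settle it by invoking the known combinatorial nonnegativity of the generalized (Jack) binomial coefficients with parameter $\frac{2}{d}$ — equivalently the extra-vanishing and positivity of shifted Jack polynomials at partition points — which exhibits $\binom{\mathbf{m}}{\mathbf{k}}$ as a sum over reverse tableaux of shape $\mathbf{m}/\mathbf{k}$ of manifestly nonnegative products of linear factors in $d$. As a sanity check, when $d=2$ the spherical polynomials are normalized Schur polynomials by (\ref{eq:spherical and schur}), so $\binom{\mathbf{m}}{\mathbf{k}}$ is the classical Schur binomial coefficient whose nonnegativity is transparent; this both confirms the general statement and fixes the normalization.
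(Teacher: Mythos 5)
Your argument follows the paper's proof essentially verbatim: both reduce, via the identity $\gamma_{\mathbf{k}}(\mathbf{m}-\rho)=\frac{1}{d_{\mathbf{k}}}\left(\frac{n}{r}\right)_{\mathbf{k}}\binom{\mathbf{m}}{\mathbf{k}}$ and the positivity of $d_{\mathbf{k}}$ and $\left(\frac{n}{r}\right)_{\mathbf{k}}$, to the nonnegativity of the generalized binomial coefficients, and then invoke the Okounkov--Olshanski expression of $\binom{\mathbf{m}}{\mathbf{k}}_{\frac{d}{2}}$ through shifted Jack polynomials together with their combinatorial (chain/tableau) positivity at partition points. The detour through the integral representation of $\Phi_{\mathbf{m}}(e+x)$ is harmless but unnecessary, and your explicit check that the normalizing factors are positive is, if anything, slightly more careful than the paper's.
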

\begin{proof}
Since $\gamma_{\mathbf{k}}(\mathbf{m}-\rho)=\frac{1}{d_{\mathbf{k}}}\left(\frac{n}{r}\right)_{\mathbf{k}}\binom{\mathbf{m}}{\mathbf{k}}$ 
and $d_{\mathbf{k}}, \left(\frac{n}{r}\right)_{\mathbf{k}}>0$, it suffices to show $\binom{\mathbf{m}}{\mathbf{k}}\geq 0$ for all $\mathbf{m},\mathbf{k} \in \mathscr{P}$. 
From \cite{OO}, generalized binomial coefficients are written as 
$$
\binom{\mathbf{m}}{\mathbf{k}}_{\frac{d}{2}}=\frac{P_{\mathbf{k}}^{\ast}\left(\mathbf{m};\frac{d}{2}\right)}{H_{\left(\frac{d}{2}\right)}(\mathbf{k})},
$$
where $P_{\mathbf{k}}^{\ast}\left(\mathbf{m};\frac{d}{2}\right)$ is the shifted Jack polynomial 
and $H_{(\frac{d}{2})}(\mathbf{k})>0$ is a deformation of the hook length. 
Moreover, by using (5.2) in \cite{OO}
$$
P_{\mathbf{k}}^{\ast}\left(\mathbf{m};\frac{d}{2}\right)
=\frac{\frac{d}{2}\text{-}\dim{\mathbf{m}/\mathbf{k}}}{\frac{d}{2}\text{-}\dim{\mathbf{m}}}|\mathbf{m}|(|\mathbf{m}|-1)\cdots(|\mathbf{m}|-|\mathbf{k}|+1).
$$
Further, the positivity of the generalized dimensions of the skew Young diagram, $\frac{d}{2}\text{-}\dim{\mathbf{m}/\mathbf{k}}$, follows from (5.1) of \cite{OO} and Chapter V\hspace{-.1em}I.\,6  of \cite{M}. 
Therefore, we obtain the positivity of the shifted Jack polynomial and the conclusion. 
\end{proof}
\begin{thm}
\label{thm:spherical Taylor expan lem}
{\rm{(1)}}\,
For $w \in \mathcal{D}, \mathbf{k} \in \mathscr{P}, \alpha \in \mathbb{C}$, we have
\begin{equation}
\label{eq:basic expansion 1}
(\alpha)_{\mathbf{k}}\Delta(e-w)^{-\alpha}\Phi_{\mathbf{k}}(w(e-w)^{-1})
=\sum_{\mathbf{x} \in \mathscr{P}}d_{\mathbf{x}}\frac{(\alpha)_{\mathbf{x}}}{\left(\frac{n}{r}\right)_{\mathbf{x}}}\gamma_{\mathbf{k}}(\mathbf{x}-\rho)\Phi_{\mathbf{x}}(w). 
\end{equation}
Here, we choose the branch of $\Delta (e-w)^{-\alpha}$ which takes the value $1$ at $w=0$.

\noindent
{\rm(2)}\,For $w \in V^{\mathbb{C}}, \mathbf{k} \in \mathscr{P}$, a $K$-invariant analytic function $e^{\tr{w}}\Phi_{\mathbf{k}}(w)$ has the following expansion. 
\begin{equation}
\label{eq:basic expansion 2}
e^{\tr{w}}\Phi_{\mathbf{k}}(w)=\sum_{\mathbf{x} \in \mathscr{P}}d_{\mathbf{x}}\frac{1}{\left(\frac{n}{r}\right)_{\mathbf{x}}}\gamma_{\mathbf{k}}(\mathbf{x}-\rho)\Phi_{\mathbf{x}}(w).
\end{equation}
\end{thm}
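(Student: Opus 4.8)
The plan is to derive both expansions from the spherical Taylor expansion formula
\[
\psi(e+x)=\sum_{\mathbf{k} \in \mathscr{P}}d_{\mathbf{k}}\frac{1}{\left(\frac{n}{r}\right)_{\mathbf{k}}}\{\Phi_{\mathbf{k}}(\partial_{x})\psi(x)|_{x=e}\}\Phi_{\mathbf{k}}(x)
\]
recorded earlier in the excerpt, together with the eigenvalue identity $\Phi_{\mathbf{k}}(\partial_{x})\varphi_{\mathbf{s}}=\gamma_{\mathbf{k}}(\mathbf{s})\varphi_{\mathbf{s}}$. The conceptual point is that in each case I must identify the relevant $K$-invariant analytic function $\psi$ whose Taylor coefficients $\Phi_{\mathbf{k}}(\partial_x)\psi(x)|_{x=e}$ I can compute in closed form, and then invoke the above expansion. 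For part (2) this is immediate; for part (1) there is an intermediate step tying the left-hand side to the spherical function $\varphi_{\mathbf{s}}$.

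For part (1), I would start from the already-recorded expansion $\varphi_{\mathbf{s}}(e+x)=\sum_{\mathbf{x} \in \mathscr{P}}d_{\mathbf{x}}\frac{1}{\left(\frac{n}{r}\right)_{\mathbf{x}}}\gamma_{\mathbf{x}}(\mathbf{s})\Phi_{\mathbf{x}}(x)$. The idea is to choose the argument cleverly: setting $x=w(e-w)^{-1}$ so that $e+x=(e-w)^{-1}$, and using the homogeneity together with $\varphi_{\mathbf{s}}(y^{-1})=\varphi_{-\mathbf{s}}(y)$, one relates $\varphi_{\mathbf{s}}((e-w)^{-1})$ to $\Delta(e-w)^{-(\ldots)}$ times a spherical function in $w$. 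Concretely, I expect to take $\mathbf{s}=\alpha-\rho$ and use the special-case evaluation $\gamma_{\mathbf{k}}(\alpha-\rho)=(-1)^{|\mathbf{k}|}(-\alpha)_{\mathbf{k}}$ from \eqref{eq:gamma special case} to match the prefactor $(\alpha)_{\mathbf{k}}$, while the generalized power $\Delta_{\mathbf{s}+\rho}=\Delta^{\alpha}$ produces the Koecher-norm factor $\Delta(e-w)^{-\alpha}$. The coefficient $\gamma_{\mathbf{k}}(\mathbf{x}-\rho)$ on the right then arises because applying $\Phi_{\mathbf{k}}(\partial_x)$ and re-expanding reindexes the sum; since $\Phi_{\mathbf{x}}=\varphi_{\mathbf{x}-\rho}$, the eigenvalue of $\Phi_{\mathbf{k}}(\partial_x)$ on $\Phi_{\mathbf{x}}$ is exactly $\gamma_{\mathbf{k}}(\mathbf{x}-\rho)$.

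For part (2), the cleaner route is to apply $\Phi_{\mathbf{k}}(\partial_w)$ to the generating identity $e^{\tr{w}}=\sum_{\mathbf{x}}d_{\mathbf{x}}\frac{1}{(n/r)_{\mathbf{x}}}\Phi_{\mathbf{x}}(w)$, which is the $\mathbf{k}=0$ specialization and itself follows from the Taylor expansion applied to $\psi=e^{\tr}$ (noting $\Phi_{\mathbf{k}}(\partial_x)e^{\tr{x}}|_{x=e}=\Phi_{\mathbf{k}}(e)=1$ by the symbol normalization $\sigma_{\Phi_{\mathbf{k}}(\partial_x)}(e,\xi)=\Phi_{\mathbf{k}}(\xi)$). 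Since $\Phi_{\mathbf{k}}(\partial_w)$ is $G$-invariant it commutes appropriately with the spherical structure, and its action on each term $\Phi_{\mathbf{x}}(w)=\varphi_{\mathbf{x}-\rho}(w)$ contributes the eigenvalue $\gamma_{\mathbf{k}}(\mathbf{x}-\rho)$, yielding the stated right-hand side. Alternatively one directly expands $\psi(w):=e^{\tr{w}}\Phi_{\mathbf{k}}(w)$ by the Taylor formula and evaluates the coefficients via the Leibniz-type action of $\Phi_{\mathbf{x}}(\partial_w)$ at $w=e$.

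The main obstacle I anticipate is in part (1): carefully justifying the change of variable $x \mapsto w(e-w)^{-1}$ and tracking how the generalized power function $\Delta_{\mathbf{s}+\rho}((e-w)^{-1})$ factors as $\Delta(e-w)^{-\alpha}$ times the correct spherical data, together with the convergence of the resulting double series for $w \in \mathcal{D}$ so that the reindexing is legitimate. Convergence and absolute summability should follow from the bounds already available, namely \Lem{thm:FK,Thm12.1.1} controlling $|\Phi_{\mathbf{m}}(z)|$ by $\lambda_1^{|\mathbf{m}|}$, \Cor{thm:ineq for generalized shifted factorial 2} controlling $|(\mathbf{s})_{\mathbf{m}}|$, and \Lem{thm:estimate of shifted Jack1} giving the polynomial growth of $\gamma_{\mathbf{k}}(\mathbf{s})$; these let me dominate the double sum on compact subsets of $\mathcal{D}$ and interchange summations freely.
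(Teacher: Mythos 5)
Your overall strategy --- expand in the spherical basis, read off the factor $\gamma_{\mathbf{k}}(\mathbf{x}-\rho)$ from the action of $\Phi_{\mathbf{k}}(\partial)$ on $\Phi_{\mathbf{x}}=\varphi_{\mathbf{x}-\rho}$, and control convergence via Lemmas \ref{thm:FK,Thm12.1.1}, \ref{thm:ineq for generalized shifted factorial} and \ref{thm:estimate of shifted Jack1} --- is the paper's, and your convergence discussion is exactly what the paper does. The gap is in part (1), at the step you describe as ``applying $\Phi_{\mathbf{k}}(\partial_x)$ and re-expanding''. Applying $\Phi_{\mathbf{k}}(\partial_x)$ termwise to $\varphi_{\mathbf{s}}(e+x)=\sum_{\mathbf{x}}d_{\mathbf{x}}\left(\frac{n}{r}\right)_{\mathbf{x}}^{-1}\gamma_{\mathbf{x}}(\mathbf{s})\Phi_{\mathbf{x}}(x)$ does produce the desired coefficient on the right, but you must then identify the resulting left-hand side $\Phi_{\mathbf{k}}(\partial_x)\varphi_{\mathbf{s}}(e+x)$ in closed form as $(\alpha)_{\mathbf{k}}\Delta(e-w)^{-\alpha}\Phi_{\mathbf{k}}(w(e-w)^{-1})$, and nothing in your sketch accomplishes this. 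The eigenfunction property is of no help here: $\Phi_{\mathbf{k}}(\partial_x)$ is the $G$-invariant operator, not the constant-coefficient operator $\Phi_{\mathbf{k}}(\partial/\partial x)$, its symbol is known explicitly only at the base point ($\Phi_{\mathbf{k}}(\partial_z)e^{(x|z)}|_{z=e}=\Phi_{\mathbf{k}}(x)e^{\tr{x}}$), and the translate $x\mapsto\varphi_{\mathbf{s}}(e+x)$ is not $G$-equivariantly related to $\varphi_{\mathbf{s}}$. Likewise your substitution $x=w(e-w)^{-1}$ only yields an expansion of $\Delta(e-w)^{-\alpha}$ in the functions $\Phi_{\mathbf{x}}(w(e-w)^{-1})$, which is the wrong basis for (\ref{eq:basic expansion 1}).

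The missing ingredient is Lemma \ref{thm:int formula}. The paper applies $\Phi_{\mathbf{k}}(\partial_z)$ in an \emph{auxiliary} variable $z$ to the Cauchy kernel identity of Lemma \ref{thm:Cauchy kernel of spherical poly} and evaluates at $z=e$: writing $\Delta(kw^{-1}-z)^{-\alpha}$ as a Laplace transform over $\Omega$, differentiating under the integral, using the symbol normalization at $z=e$ to pull out $\Phi_{\mathbf{k}}(x)$, and then invoking Lemma \ref{thm:int formula} gives $(\alpha)_{\mathbf{k}}\Delta(w^{-1}-e)^{-\alpha}\Phi_{\mathbf{k}}((w^{-1}-e)^{-1})$, which simplifies to the stated left side. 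Without this (or an equivalent closed form for $\Phi_{\mathbf{k}}(\partial)$ acting on $\Delta(\cdot)^{-\alpha}$) the argument does not close. Two smaller points: $(-1)^{|\mathbf{k}|}(-\alpha)_{\mathbf{k}}$ is a product of \emph{falling} factorials and does not equal $(\alpha)_{\mathbf{k}}$ for $|\mathbf{k}|\geq 2$, so the choice $\mathbf{s}=\alpha-\rho$ does not ``match the prefactor'' as you claim; and in part (2) your direct route needs $\Phi_{\mathbf{k}}(\partial_w)e^{\tr{w}}=\Phi_{\mathbf{k}}(w)e^{\tr{w}}$ at every $w$ (which does follow from $G$-invariance of the symbol, but is more than the evaluation at $w=e$ you quote); the paper instead obtains (2) as the confluent limit $\alpha\to\infty$, $w\mapsto w/\alpha$ of (1).
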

\begin{proof}
{\rm{(1)}}\,We take $w=u\sum_{j=1}^{r}\lambda_{j}c_{j} \in \mathcal{D}$ with $u \in U$ and $1>\lambda_{1}\geq \ldots\geq \lambda_{r}\geq 0$. 
By Lemmas\,\ref{thm:FK,Thm12.1.1} and \ref{thm:estimate of shifted Jack1}, there exist some $C>0$ and $N \in \mathbb{Z}_{\geq 0}$ such that
\begin{align}
\sum_{\mathbf{x} \in \mathscr{P}}\left|d_{\mathbf{x}}\frac{(\alpha)_{\mathbf{x}}}{\left(\frac{n}{r}\right)_{\mathbf{x}}}\gamma_{\mathbf{k}}(\mathbf{x}-\rho)\Phi_{\mathbf{x}}(w)\right|
&\leq \sum_{\mathbf{x} \in \mathscr{P}}d_{\mathbf{x}}\frac{|(\alpha)_{\mathbf{x}}|}{\left(\frac{n}{r}\right)_{\mathbf{x}}}|\gamma_{\mathbf{k}}(\mathbf{x}-\rho)||\Phi_{\mathbf{x}}(w)| \nonumber \\
&\leq C\prod_{l=1}^{r}\sum_{x_{l}\geq 0}\frac{(|\alpha|+d(r-1))_{x_{l}}}{x_{l}!}\left(x_{l}+\frac{d}{2}(r-1)\right)^{N}\lambda_{l}^{x_{l}}< \infty. \nonumber
\end{align}
%The second inequality follows from
Here, we remark 
$$
|\gamma_{\mathbf{k}}(\mathbf{x}-\rho)|\leq C\prod_{l=1}^{r}\left(\left|x_{l}-\frac{d}{4}(2l-r-1)\right|+\frac{d}{4}(r-1)\right)^{N}
\leq C\prod_{l=1}^{r}\left(x_{l}+\frac{d}{2}(r-1)\right)^{N}.
$$
Therefore, the right hand side of (\ref{eq:basic expansion 1}) converges absolutely. 
By analytic continuation, it is sufficient to show the assertion when $\Re{\alpha}>\frac{d}{2}(r-1)$ and $w \in \Omega \cap (e-\Omega) \subset \mathcal{D}$.
\begin{align}
\Phi_{\mathbf{k}}(\partial_{z})\sum_{\mathbf{x} \in \mathscr{P}}d_{\mathbf{x}}\frac{(\alpha)_{\mathbf{x}}}{\left(\frac{n}{r}\right)_{\mathbf{x}}}\Phi_{\mathbf{x}}(z)\Phi_{\mathbf{x}}(w)\bigg|_{z=e}
&=\sum_{\mathbf{x} \in \mathscr{P}}d_{\mathbf{x}}\frac{(\alpha)_{\mathbf{x}}}{\left(\frac{n}{r}\right)_{\mathbf{x}}}\Phi_{\mathbf{k}}(\partial_{z})\Phi_{\mathbf{x}}(z)|_{z=e}\Phi_{\mathbf{x}}(w) \nonumber \\
&=\sum_{\mathbf{x} \in \mathscr{P}}d_{\mathbf{x}}\frac{(\alpha)_{\mathbf{x}}}{\left(\frac{n}{r}\right)_{\mathbf{x}}}\gamma_{\mathbf{k}}(\mathbf{x}-\rho)\Phi_{\mathbf{x}}(w). \nonumber
\end{align}
On the other hand, 
% \begin{align}
% \Phi_{\mathbf{k}}(\partial_{z})\sum_{\mathbf{x} \in \mathscr{P}}d_{\mathbf{x}}\frac{(\alpha)_{\mathbf{x}}}{\left(\frac{n}{r}\right)_{\mathbf{x}}}\Phi_{\mathbf{x}}(z)\Phi_{\mathbf{x}}(w)\bigg|_{z=e}
% &=\Phi_{\mathbf{k}}(\partial_{z})\Delta(w)^{-\alpha}\int_{K}\Delta(kw^{-1}-z)^{-\alpha}dk\bigg|_{z=e} \nonumber \\
% &=\frac{\Delta(w)^{-\alpha}}{\Gamma_{\Omega}(\alpha)}
% \int_{K}\Phi_{\mathbf{k}}(\partial_{z})\int_{\Omega}e^{-(x|kw^{-1}-z)}\Delta(x)^{\alpha -\frac{n}{r}}\bigg|_{z=e}dxdk \nonumber \\
% &=\frac{\Delta(w)^{-\alpha}}{\Gamma_{\Omega}(\alpha)}
% \int_{K}\int_{\Omega}e^{-(x|kw^{-1})}\Delta(x)^{\alpha -\frac{n}{r}}\Phi_{\mathbf{k}}(\partial_{z})e^{(x|z)}\bigg|_{z=e}dxdk \nonumber \\
% &=\frac{\Delta(w)^{-\alpha}}{\Gamma_{\Omega}(\alpha)}
% \int_{\Omega}\int_{K}e^{-(x|kw^{-1}-e)}\Phi_{\mathbf{k}}(x)\Delta(x)^{\alpha -\frac{n}{r}}dkdx \nonumber \\
% &=\frac{\Delta(w)^{-\alpha}}{\Gamma_{\Omega}(\alpha)}
% \int_{\Omega}e^{-(x|w^{-1}-e)}\Phi_{\mathbf{k}}(x)\Delta(x)^{\alpha -\frac{n}{r}}dx \nonumber \\
% &=\frac{\Delta(w)^{-\alpha}}{\Gamma_{\Omega}(\alpha)}
% \int_{\Omega}e^{-(x|w^{-1}-e)}\Phi_{\mathbf{k}}(x)\Delta(x)^{\alpha -\frac{n}{r}}dx \nonumber \\
% &=\frac{\Delta(w)^{-\alpha}}{\Gamma_{\Omega}(\alpha)}\Gamma_{\Omega}(\alpha +\mathbf{k})\Delta(w^{})
% \end{align}
\begin{align}
\Phi_{\mathbf{k}}(\partial_{z})\sum_{\mathbf{x} \in \mathscr{P}}d_{\mathbf{x}}\frac{(\alpha)_{\mathbf{x}}}{\left(\frac{n}{r}\right)_{\mathbf{x}}}\Phi_{\mathbf{x}}(z)\Phi_{\mathbf{x}}(w)\bigg|_{z=e}
&=\Phi_{\mathbf{k}}(\partial_{z})\Delta(w)^{-\alpha}\int_{K}\Delta(kw^{-1}-z)^{-\alpha}\,dk\bigg|_{z=e} \nonumber \\
&=\Delta(w)^{-\alpha}\int_{K}\Phi_{\mathbf{k}}(\partial_{z})\Delta(kw^{-1}-z)^{-\alpha}\big|_{z=e}\,dk. \nonumber 
\end{align}
Here, from $kw^{-1}-z \in T_{\Omega}$ for all $k \in K$ and Lemma\,\ref{thm:int formula}, 
\begin{align}
\Phi_{\mathbf{k}}(\partial_{z})\Delta(kw^{-1}-z)^{-\alpha}\big|_{z=e}
&=\Phi_{\mathbf{k}}(\partial_{z})\frac{1}{\Gamma_{\Omega}(\alpha)}\int_{\Omega}e^{-(x|kw^{-1}-z)}\Delta(x)^{\alpha}\Delta(x)^{-\frac{n}{r}}\,dx\bigg|_{z=e} \nonumber \\
&=\frac{1}{\Gamma_{\Omega}(\alpha)}\int_{\Omega}\Phi_{\mathbf{k}}(\partial_{z})e^{(x|z)}|_{z=e}e^{-(x|kw^{-1})}\Delta(x)^{\alpha}\Delta(x)^{-\frac{n}{r}}\,dx \nonumber \\
&=\frac{1}{\Gamma_{\Omega}(\alpha)}\int_{\Omega}\Phi_{\mathbf{k}}(x)e^{-(kx|(w^{-1}-e))}\Delta(x)^{\alpha}\Delta(x)^{-\frac{n}{r}}\,dx \nonumber \\
&=(\alpha)_{\mathbf{k}}\Delta(w^{-1}-e)^{-\alpha}\Phi_{\mathbf{k}}((w^{-1}-e)^{-1}). \nonumber
\end{align}
Therefore, 
% \begin{align}
% \Phi_{\mathbf{k}}(\partial_{z})\sum_{\mathbf{x} \in \mathscr{P}}d_{\mathbf{x}}\frac{(\alpha)_{\mathbf{x}}}{\left(\frac{n}{r}\right)_{\mathbf{x}}}\Phi_{\mathbf{x}}(z)\Phi_{\mathbf{x}}(w)\bigg|_{z=e}
% &=\Phi_{\mathbf{k}}(\partial_{z})\Delta(w)^{-\alpha}\int_{K}\Delta(kw^{-1}-z)^{-\alpha}dk\bigg|_{z=e} \nonumber \\
% &=\Delta(w)^{-\alpha}\int_{K}\Phi_{\mathbf{k}}(\partial_{z})\Delta(kw^{-1}-z)^{-\alpha}\bigg|_{z=e}dk \nonumber \\
% &=\Delta(w)^{-\alpha}\int_{K}(\alpha)_{\mathbf{k}}\Delta(kw^{-1}-e)^{-\alpha}\Phi_{\mathbf{k}}((kw^{-1}-e)^{-1})dk \nonumber \\
% &=(\alpha)_{\mathbf{k}}\Delta(w)^{-\alpha}\Delta(w^{-1}-e)^{-\alpha}\Phi_{\mathbf{k}}((w^{-1}-e)^{-1}) \nonumber \\
% &=(\alpha)_{\mathbf{k}}\Delta(e-w)^{-\alpha}\Phi_{\mathbf{k}}(w(e-w)^{-1}). \nonumber
% \end{align}
\begin{align}
\Phi_{\mathbf{k}}(\partial_{z})\sum_{\mathbf{x} \in \mathscr{P}}d_{\mathbf{x}}\frac{(\alpha)_{\mathbf{x}}}{\left(\frac{n}{r}\right)_{\mathbf{x}}}\Phi_{\mathbf{x}}(z)\Phi_{\mathbf{x}}(w)\bigg|_{z=e}
&=\Delta(w)^{-\alpha}\int_{K}(\alpha)_{\mathbf{k}}\Delta(w^{-1}-e)^{-\alpha}\Phi_{\mathbf{k}}((w^{-1}-e)^{-1})\,dk \nonumber \\
%&=(\alpha)_{\mathbf{k}}\Delta(w)^{-\alpha}\Delta(w^{-1}-e)^{-\alpha}\Phi_{\mathbf{k}}((w^{-1}-e)^{-1}) \nonumber \\
&=(\alpha)_{\mathbf{k}}\Delta(e-w)^{-\alpha}\Phi_{\mathbf{k}}(w(e-w)^{-1}). \nonumber
\end{align}
{\rm{(2)}}\,Since 
% \begin{align}
% \lim_{\alpha \to \infty}(\alpha)_{\mathbf{k}}\Delta\left(e-\frac{w}{\alpha}\right)^{-\alpha}\Phi_{\mathbf{k}}\left(\frac{w}{\alpha}\left(e-\frac{w}{\alpha}\right)^{-1}\right)
% &=e^{\tr{w}}\Phi_{\mathbf{k}}(w). \nonumber 
% \end{align}
%and 
the right hand side of (\ref{eq:basic expansion 2}) converges absolutely due to a similar argument of {\rm{(1)}}, we have
\begin{align}
e^{\tr{w}}\Phi_{\mathbf{k}}(w)
&=\lim_{\alpha \to \infty}(\alpha)_{\mathbf{k}}\Delta\left(e-\frac{w}{\alpha}\right)^{-\alpha}\Phi_{\mathbf{k}}\left(\frac{w}{\alpha}\left(e-\frac{w}{\alpha}\right)^{-1}\right) \nonumber \\
&=\sum_{\mathbf{x} \in \mathscr{P}}d_{\mathbf{x}}\frac{1}{\left(\frac{n}{r}\right)_{\mathbf{x}}}\gamma_{\mathbf{k}}(\mathbf{x}-\rho)\lim_{\alpha \to \infty}(\alpha)_{\mathbf{x}}\Phi_{\mathbf{x}}\left(\frac{w}{\alpha}\right) \nonumber \\
&=\sum_{\mathbf{x} \in \mathscr{P}}d_{\mathbf{x}}\frac{1}{\left(\frac{n}{r}\right)_{\mathbf{x}}}\gamma_{\mathbf{k}}(\mathbf{x}-\rho)\Phi_{\mathbf{x}}(w). \nonumber
\end{align}
\end{proof}
Next, we preview the gradient for a $\mathbb{C}$-valued and $V$-valued function $f$ on simple Euclidean Jordan algebra $V$. 
%, and recall some gradient results for some functions. 
In this parts, we refer to \cite{Di}. 
For differentiable function $f:V\rightarrow \mathbb{R}$ and $x,u \in V$, we define the gradient, $\nabla f(x) \in V$, of $f$ by 
$$
(\nabla f(x)|u)=D_{u}f(x)=\frac{d}{dt}f(x+tu)\bigg|_{t=0}.
$$ 
For a $\mathbb{C}$-valued function $f=f_{1}+if_{2}$, we define $\nabla f=\nabla f_{1}+i\nabla f_{2}$. 
For $z=x+iy \in V^{\mathbb{C}}$, we define $D_{z}=D_{x}+iD_{y}$. 
%If $x=\sum_{j=1}^{r}x_{j}c_{j}, u=\sum_{j=1}^{r}u_{j}c_{j} \in V^{\mathbb{C}}$, then 
Moreover, if $\{e_{1},\ldots,e_{n}\}$ is an orthonormal basis of $V$ and $x=\sum_{j=1}^{n}x_{j}e_{j} \in V^{\mathbb{C}}$, then 
$$
\nabla f(x)=\sum_{j=1}^{n}\frac{{\partial}f(x)}{{\partial}x_{j}}e_{j}.
$$
We remark that this expression is independent of the choice of an orthonormal basis of $V$. 
%, this expression is well defined. 

For a $V$-valued function $f:V\rightarrow V$ expressed by $f(x)=\sum_{j=1}^{r}f_{j}(x)e_{j}$, we define $\nabla f$ by
$$
\nabla f(x)=\sum_{j,l=1}^{n}\frac{{\partial}f_{j}(x)}{{\partial}x_{l}}e_{j}e_{l}.
$$
That is also well defined. 
Let us present some derivation formulas. %for a $V$-valued function. 
\begin{lem}
\label{thm:derivation Lemma}
{\rm{(1)}}\,The product rule of differentiation: 
For $V$-valued function $f,h$, we have 
% \begin{equation}
% \tr{(\nabla_{x}(f(x)h(x)))}=\tr{(\nabla_{x}f(x))}h(x)+f(x)\tr{(\nabla_{x}h(x))}.
% \end{equation}
\begin{equation}
\tr{(\nabla(f(x)h(x)))}=\tr{(\nabla{f(x)})}h(x)+f(x)\tr{(\nabla{h(x)})}.
\end{equation}
For $\mathbb{C}$-valued functions $f,h$, 
\begin{equation}
\nabla(f(x)h(x))=(\nabla{f(x)})h(x)+f(x)(\nabla{h(x)}). 
\end{equation}
{\rm{(2)}}
\begin{equation}
\nabla{x}=\frac{n}{r}e.
\end{equation}
{\rm{(3)}}\,For any invertible element $x \in V^{\mathbb{C}}$, 
\begin{equation}
\tr{(x\nabla)}x^{-1}:=\tr{(x(\nabla{x^{-1}}))}=-\frac{n}{r}\tr{x^{-1}}.
\end{equation}
{\rm{(4)}}\,For $\beta \in \mathbb{C}$ and an invertible element $x \in V^{\mathbb{C}}$, 
\begin{equation}
\nabla (\Delta(x)^{\beta})=\beta \Delta(x)^{\beta}x^{-1}.
\end{equation}
\end{lem}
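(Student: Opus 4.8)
The plan is to treat the four formulas in order, deriving the later ones from the earlier ones. For (1), the $\mathbb{C}$-valued identity is just the ordinary Leibniz rule applied coordinatewise: writing $\nabla g=\sum_{l}\frac{\partial g}{\partial x_{l}}e_{l}$ and expanding $\frac{\partial (fh)}{\partial x_{l}}=\frac{\partial f}{\partial x_{l}}h+f\frac{\partial h}{\partial x_{l}}$ gives $\nabla(fh)=(\nabla f)h+f(\nabla h)$ at once. For the $V$-valued identity the products on the right are to be read under the trace, so that the asserted equality is $\tr(\nabla(fh))=\tr((\nabla f)h)+\tr(f(\nabla h))$, both sides being scalars. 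Here I would start from the divergence formula $\tr(\nabla F)=\sum_{l}(D_{e_{l}}F\,|\,e_{l})$ valid for any $V$-valued $F$, and from $D_{e_{l}}(fh)=(D_{e_{l}}f)h+f(D_{e_{l}}h)$ (bilinearity of the Jordan product). The essential point is the associativity of the trace form, $\tr((ab)c)=\tr(a(bc))$, together with commutativity, which makes the structure constants $\tr((e_{j}e_{k})e_{l})$ totally symmetric; regrouping the resulting sums along these symmetries identifies $\sum_{l}((D_{e_{l}}f)h\,|\,e_{l})$ with $\tr((\nabla f)h)$, and similarly for the second term.

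For (2), by definition $\nabla x=\sum_{j,l}\frac{\partial x_{j}}{\partial x_{l}}e_{j}e_{l}=\sum_{j}e_{j}^{2}$, and this sum is independent of the chosen orthonormal basis, since an orthogonal change of basis leaves $\sum_{j}e_{j}^{2}$ fixed. I would evaluate it on a basis adapted to the Peirce decomposition $V=\bigoplus_{j}\mathbb{R}c_{j}\oplus\bigoplus_{j<k}V_{jk}$: the idempotents give $\sum_{j}c_{j}^{2}=\sum_{j}c_{j}=e$, while an orthonormal $v\in V_{jk}$ satisfies $v^{2}=\frac{1}{2}(c_{j}+c_{k})$, so each $V_{jk}$ contributes $\frac{d}{2}(c_{j}+c_{k})$. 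Summing and using that each $c_{j}$ occurs in $r-1$ pairs yields $e+\frac{d}{2}(r-1)e=(1+\frac{d}{2}(r-1))e=\frac{n}{r}e$ by the dimension formula $n=r+\frac{d}{2}r(r-1)$. (Alternatively, basis independence shows $\sum_{j}e_{j}^{2}$ is $K$-invariant, hence a multiple of $e$, and applying $\tr$ fixes the constant as $\frac{n}{r}$ because $\tr(e)=r$.) This identity is the main structural obstacle.

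For (3), I would apply (1) and (2) to the constant function $x\cdot x^{-1}=e$. Since the gradient of a constant vanishes, $0=\tr(\nabla(x\,x^{-1}))=\tr((\nabla x)x^{-1})+\tr(x(\nabla x^{-1}))$; substituting $\nabla x=\frac{n}{r}e$ from (2) gives $\tr((\nabla x)x^{-1})=\frac{n}{r}\tr(x^{-1})$, whence $\tr(x(\nabla x^{-1}))=-\frac{n}{r}\tr(x^{-1})$, which is exactly the claim.

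Finally, for (4) I would invoke the standard gradient of the Koecher norm, $\nabla\log\Delta(x)=x^{-1}$, i.e. $\nabla\Delta(x)=\Delta(x)x^{-1}$ (see \cite{FK}), and combine it with the chain rule for the scalar function $\Delta^{\beta}$: $\nabla(\Delta(x)^{\beta})=\beta\Delta(x)^{\beta-1}\nabla\Delta(x)=\beta\Delta(x)^{\beta}x^{-1}$. All four statements extend to invertible $x\in V^{\mathbb{C}}$ by complex linearity of $\nabla$ and analytic continuation.
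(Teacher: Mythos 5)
Your proposal is correct and follows the paper's route: the paper merely cites \cite{FK}, \cite{Di}, \cite{FW1} for (1), (2), (4) and derives (3) exactly as you do, from $\nabla(xx^{-1})=\nabla e=0$ together with (1) and (2). The extra material you supply --- the symmetry of $\tr((e_ae_b)e_c)$ for the $V$-valued Leibniz rule, the Peirce-basis computation of $\sum_j e_j^2=\frac{n}{r}e$, and $\nabla\log\Delta(x)=x^{-1}$ for (4) --- is a sound filling-in of the facts the paper outsources to its references.
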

\noindent
{\rm{(1)}},\,{\rm{(2)}}, and {\rm{(4)}} are well known (see \cite{FK}, \cite{Di}, and \cite{FW1}). 
{\rm{(3)}} follows from {\rm{(1)}}, {\rm{(2)}}, and $\nabla(x x^{-1})=\nabla(e)=0$.

The following recurrence formulas for the spherical functions, some of which involve the gradient, are also well known (see \cite{Di} and \cite{FW1}).

Finally, we provide a Plancherel theorem, which is needed to investigate the MCJ polynomials. 
\begin{lem}
\label{thm:Plancherel,[FK] Thm 9.4.1}
Put
\begin{align}
L^{2}(\Omega)&:=\{\psi:\Omega  \longrightarrow \mathbb{C} \mid \|\psi\|_{\Omega}^{2}<\infty \}, \nonumber \\
  H^{2}(V)&:=\left\{\Psi:V \longrightarrow \mathbb{C} \mid \text{$\|\Psi\|_{V}^{2}<\infty$ and $\Psi$ is continued analytically to $H_{\Omega}$} \right. \nonumber \\
  {} & \quad \quad \quad \quad \quad \quad \quad \quad \quad \left. \text{as a holomorphic function which satisfies with} \right. \nonumber \\
  {} & \quad \quad \quad \quad \quad \quad \quad \quad \quad \left. \sup_{y \in \Omega}\frac{1}{(2\pi)^{n}}\int_{V}|\Psi(x+iy)|^{2}\,dx < \infty\right\}. \nonumber
% H^{2}(V)&:=\left\{\Psi:V \longrightarrow \mathbb{C} \mid \text{$\Psi$ is continued analytically to $H_{\Omega}$ as a holomorphic function}\right. \nonumber \\
% {} & \quad \quad \quad \quad \quad \quad \quad \quad \quad \left. \text{and $\|\Psi\|_{V}^{2}<\infty$}\right\}. \nonumber
\end{align}
%and $H^{2}(V)$ is an isometric embedding of $H^{2}(H_{\Omega})$ to $L^{2}(V)$. 
Here, 
\begin{align}
\|\psi\|_{\Omega}^{2}:=\int_{\Omega}|\psi(u)|^{2}\,du, \,\,\,\,\,\,
\|\Psi\|_{V}^{2}:=\frac{1}{(2\pi)^{n}}\int_{V}|\Psi(t)|^{2}\,dt. \nonumber
\end{align}
% \begin{align}
% \|\psi\|_{\Omega}^{2}&:=\int_{\Omega}|\psi(u)|^{2}du, \nonumber \\
%\|\Psi(z)\|_{H_{\Omega}}^{2}&:=\sup_{y \in \Omega}\frac{1}{(2\pi)^{n}}\int_{V}|\Psi(x+iy)|^{2}dx, \nonumber \\
% \|\Psi\|_{V}^{2}&:=\frac{1}{(2\pi)^{n}}\int_{V}|\Psi(t)|^{2}dt. \nonumber
% \end{align}
% The Fourier transform of an integrable function $\psi$ on $V$ is defined as
The (inverse) Fourier transform of an integrable function, $\psi$, on $\Omega$ is defined as
\begin{equation}
(F^{-1}\psi)(t):=\int_{\Omega}e^{i(t|u)}\psi(u)\,du.
\end{equation}
We have
\begin{equation}
F^{-1}:L^{2}(\Omega) \,\,\,{\xrightarrow{\simeq}}\,\,\,H^{2}(V)\,\,\,\,({\rm{unitary}}).
\end{equation}
In particular, 
\begin{equation}
F^{-1}:L^{2}(\Omega)^{K} \,\,\,{\xrightarrow{\simeq}}\,\,\,H^{2}(V)^{K}\,\,\,\,({\rm{unitary}}).
\end{equation}
%This isomorphism also holds for $L^{2}(\Omega)^{K}$ and $H^{2}(V)^{K}$.
\end{lem}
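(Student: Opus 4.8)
The statement is the Paley--Wiener--Plancherel theorem for the tube domain $H_{\Omega}=V+i\Omega$, and since it is quoted from \cite{FK}, the natural plan is to reduce it to the ordinary Euclidean Plancherel theorem on $V\cong\bR^{n}$, with the geometry of the symmetric cone entering only through its self-duality $\Omega^{*}=\Omega$. I would organize the argument as: (i) boundedness and holomorphy of $F^{-1}$ with the isometry, (ii) surjectivity via the support condition, (iii) the $K$-equivariance.

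First I would check that $F^{-1}$ sends $L^{2}(\Omega)$ into holomorphic functions on $H_{\Omega}$ obeying the required bound. For $t=x+iy$ with $y\in\Omega$ one has $e^{i(t|u)}=e^{i(x|u)}e^{-(y|u)}$, and self-duality gives $(y|u)>0$ for $u\in\Omega$, so the integrand decays and $\Psi(t):=(F^{-1}\psi)(t)$ is well defined and holomorphic on $H_{\Omega}$ by differentiation under the integral. For fixed $y\in\Omega$, the slice $\Psi(\cdot+iy)$ is the Euclidean Fourier transform of $u\mapsto e^{-(y|u)}\psi(u)$ (extended by zero off $\Omega$), so the Euclidean Plancherel theorem yields
\[
\frac{1}{(2\pi)^{n}}\int_{V}|\Psi(x+iy)|^{2}\,dx
=\int_{\Omega}e^{-2(y|u)}|\psi(u)|^{2}\,du
\le\int_{\Omega}|\psi(u)|^{2}\,du=\|\psi\|_{\Omega}^{2}.
\]
Taking the supremum over $y\in\Omega$ shows $\Psi\in H^{2}(V)$, and letting $y\to 0$ with dominated convergence upgrades this to $\|\Psi\|_{V}=\|\psi\|_{\Omega}$; that is, $F^{-1}$ is an isometry onto its image.

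For surjectivity I would, given $\Psi\in H^{2}(V)$, write $\hat\Psi_{y}$ for the Euclidean Fourier transform of $\Psi(\cdot+iy)$. The Cauchy--Riemann equations $\partial_{y_{j}}\Psi=i\,\partial_{x_{j}}\Psi$ translate under the Fourier transform into $\partial_{y_{j}}\hat\Psi_{y}(u)=-u_{j}\,\hat\Psi_{y}(u)$, forcing $\hat\Psi_{y}(u)=e^{-(y|u)}h(u)$ for a single function $h$ independent of $y$. The uniform bound then reads $\int_{V}e^{-2(y|u)}|h(u)|^{2}\,du\le\sup_{y}\|\Psi(\cdot+iy)\|^{2}<\infty$ for all $y\in\Omega$, and this compels $h$ to vanish off $\overline{\Omega}$: if $h$ had positive mass on a set where $(y_{0}|u)<0$ for some $y_{0}\in\Omega$, then scaling $y=ty_{0}$ with $t\to\infty$ would make the integral diverge by Fatou, contradicting self-duality $\Omega^{*}=\Omega$. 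Since $\partial\Omega$ is null, $\psi:=h\in L^{2}(\Omega)$ and $F^{-1}\psi=\Psi$. I expect this support argument to be the main obstacle, as it is where the cone geometry enters and where one must carefully justify passing between boundary values and Fourier transforms.

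Finally, the $K$-equivariance is routine. The group $K$ acts orthogonally on $V$, preserving $\Omega$, the trace form $(\cdot|\cdot)$, and the Euclidean measure; hence for $K$-invariant $\psi$ the substitution $u\mapsto ku$ in $(F^{-1}\psi)(kt)=\int_{\Omega}e^{i(t|k^{-1}u)}\psi(u)\,du$ gives back $(F^{-1}\psi)(t)$, so $F^{-1}\psi$ is $K$-invariant, and the same computation applied to the inverse map shows $K$-invariance is reflected. Therefore $F^{-1}$ restricts to the claimed unitary isomorphism $L^{2}(\Omega)^{K}\xrightarrow{\simeq}H^{2}(V)^{K}$.
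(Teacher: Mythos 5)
Your proof is correct, but it takes a genuinely different route from the paper's. The paper disposes of this lemma in a few lines by quoting the Hardy-space theory of \cite{FK}: Theorem IX.4.1 there already supplies the unitary isomorphism $\widetilde{F}^{-1}\colon L^{2}(\Omega)\to H^{2}(H_{\Omega})$, Corollary IX.4.2 supplies the isometric boundary-value map $\widetilde{\Psi}\mapsto\widetilde{\Psi}_{0}$ into $L^{2}(V)$, and the paper merely composes the two, notes that surjectivity onto $H^{2}(V)$ is built into the definition of that space, and observes that $F^{-1}$ commutes with the $K$-action. You instead reconstruct the content of those citations from scratch: Euclidean Plancherel on each slice $\Psi(\cdot+iy)$ gives the isometry, and the Cauchy--Riemann/Fatou argument combined with self-duality $\Omega^{*}=\overline{\Omega}$ pins down the support of the inverse transform, which is the Paley--Wiener half of the statement. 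Your version is self-contained and makes visible exactly where the cone geometry enters (only through self-duality), at the cost of having to handle the analytic fine print that \cite{FK} has already settled --- chiefly, identifying the function on $V$ named in the statement with the $L^{2}$-boundary value of its holomorphic extension, so that $\|\Psi\|_{V}=\|\psi\|_{\Omega}$ refers to the right object and not merely to $\lim_{y\to0}\|\Psi(\cdot+iy)\|$; you flag this point but do not carry it out. Both arguments are sound; the paper's is shorter precisely because it outsources the step you correctly identify as the main obstacle.
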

\begin{proof}
From Theorem\,I\hspace{-.1em}X.4.1 in \cite{FK}, we have 
$$
\widetilde{F}^{-1}:L^{2}(\Omega)\,\,\,{\xrightarrow{\simeq}}\,\,\,H^{2}(H_{\Omega})\,\,\,\,({\rm{unitary}}),
$$
where 
\begin{align}
H^{2}(H_{\Omega})&:=\{\widetilde{\Psi}:H_{\Omega}:=V+i\Omega  \longrightarrow \mathbb{C} \mid \text{$\widetilde{\Psi}$ is analytic in $H_{\Omega}$ and $\|\widetilde{\Psi}\|_{H_{\Omega}}^{2}<\infty$}\}, \nonumber \\
\|\widetilde{\Psi}\|_{H_{\Omega}}^{2}&:=\sup_{y \in \Omega}\frac{1}{(2\pi)^{n}}\int_{V}|\widetilde{\Psi}(x+iy)|^{2}\,dx, \nonumber \\
\widetilde{F}^{-1}(\psi)(z)&:=\int_{\Omega}e^{i(z|u)}\psi(u)\,du. \nonumber 
\end{align}
% By the proof of Theorem\,I\hspace{-.1em}X.4.1 in \cite{FK}, we also remark
% \begin{equation}
% \|\widetilde{F}^{-1}(\psi)\|_{H_{\Omega}}^{2}
% \end{equation} 
Moreover, from Corollary\,I\hspace{-.1em}X.4.2 in \cite{FK}, for function $\widetilde{\Psi} \in H^{2}(H_{\Omega})$, $y \in \Omega$, we write $\widetilde{\Psi}_{y}(x):=\widetilde{\Psi}(x+iy)$; then,
$$
\lim_{y \to 0, y \in \Omega}\widetilde{\Psi}_{y}=\widetilde{\Psi}_{0},\,\,\,\,\,\widetilde{\Psi}_{0}(t):=\int_{\Omega}e^{i(t|u)}\psi(u)\,du=F^{-1}(\psi)(t),  
$$
exists in $L^{2}(V)$ and the map $\widetilde{\Psi} \mapsto \widetilde{\Psi}_{0}$ is an isometric embedding of $H^{2}(H_{\Omega})$ into $L^{2}(V)$.
Hence, the map $F^{-1}:\psi \mapsto \widetilde{\Psi}_{0}$ is unitary. 
The surjectivity of this map follows from the above facts and the definition of $H^{2}(V)$.

% We now show that this map is surjective. 
% If $\Psi \in H^{2}(V)$, since 

Furthermore, since the inverse Fourier transform $F^{-1}$ and the action of $K$ are commutative, the above unitary isomorphism also holds for the $K$-invariant spaces.  
\end{proof}

\subsection{Multivariate Laguerre polynomials and their unitary picture}
In this subsection, we promote the multivariate Laguerre polynomials %which becomes important in the later, 
and provide some fundamental lemmas based on \cite{FK}, \cite{FW1}.

First, we recall the multivariate Laguerre polynomials.  
Let $\alpha >\frac{n}{r}-1=\frac{d}{2}(r-1)$, $\mathbf{m} \in \mathscr{P}$. 
\\
{\bf{(1)}}\,\,$\psi_{\mathbf{m}}^{(\alpha)}$\,;\,Multivariate Laguerre polynomials (to multiply exponential)
\begin{align}
L^{2}_{\alpha}(\Omega)^{K}&:=\{\psi:\Omega \longrightarrow \mathbb{C} \mid \psi \text{ is $K$-invariant and } \|\psi\|_{\alpha,\Omega}^{2}<\infty\}, \nonumber \\
\|\psi\|_{\alpha,\Omega}^{2}&:=\frac{2^{r\alpha}}{\Gamma_{\Omega}(\alpha)}\int_{\Omega}|\psi(u)|^{2}\Delta(u)^{\alpha -\frac{n}{r}}\,du, \nonumber \\
\psi_{\mathbf{m}}^{(\alpha)}(u)&:=e^{-\tr{u}}L_{\mathbf{m}}^{(\alpha-\frac{n}{r})}(2u). \nonumber %=\frac{(\alpha)_{m}}{m!}e^{-u}{_{1}F_1}\left(\begin{matrix}-m\\ \alpha  \end{matrix};2u\right)
\end{align}
Here, $L_{\mathbf{m}}^{\left(\alpha -\frac{n}{r}\right)}(u)$ is the multivariate Laguerre polynomial defined by 
\begin{align}
L_{\mathbf{m}}^{\left(\alpha -\frac{n}{r}\right)}(u)
&:=d_{\mathbf{m}}\frac{(\alpha)_{\mathbf{m}}}{\left(\frac{n}{r}\right)_{\mathbf{m}}}
\sum_{\mathbf{k}\subset \mathbf{m}}(-1)^{|\mathbf{k}|}\binom{\mathbf{m}}{\mathbf{k}}\frac{1}{(\alpha)_{\mathbf{k}}}\Phi_{\mathbf{k}}(u) \nonumber \\
&=d_{\mathbf{m}}\frac{(\alpha)_{\mathbf{m}}}{\left(\frac{n}{r}\right)_{\mathbf{m}}}
\sum_{\mathbf{k}\subset \mathbf{m}}(-1)^{|\mathbf{k}|}d_{\mathbf{k}}\frac{\gamma_{\mathbf{k}}(\mathbf{m}-\rho)}{\left(\frac{n}{r}\right)_{\mathbf{k}}(\alpha)_{\mathbf{k}}}\Phi_{\mathbf{k}}(u). \nonumber
\end{align}
We remark that $\{\psi_{\mathbf{m}}^{(\alpha)}\}_{\mathbf{m} \in \mathscr{P}}$ form complete orthogonal basis of $L^{2}_{\alpha}(\Omega)^{K}$ and 
$$
\|\psi_{\mathbf{m}}^{(\alpha)}\|_{\alpha,\Omega}^{2}=d_{\mathbf{m}}\frac{(\alpha)_{\mathbf{m}}}{\left(\frac{n}{r}\right)_{\mathbf{m}}}.
$$
The multivariate Laguerre polynomials have the following generating function. 
\begin{lem}
\label{thm:generating fnc of Laguerre and MP}
\,For any $\alpha \in \mathbb{C},u \in \Omega$ and $z \in \mathcal{D}$, we have
\begin{equation}
\label{eq:generating fnc of Laguerre}
\sum_{\mathbf{m} \in \mathscr{P}}L_{\mathbf{m}}^{\left(\alpha -\frac{n}{r}\right)}(u)\Phi_{\mathbf{m}}(z)=\Delta(e-z)^{-\alpha}\int_{K}e^{-(ku|z(e-z)^{-1})}\,dk.
\end{equation}
%Here, we define the branch by $\Delta(e)^{-\alpha}=1$.
\end{lem}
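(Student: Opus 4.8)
The plan is to expand $L_{\mathbf{m}}^{(\alpha-\frac{n}{r})}$ in the spherical basis, interchange the two summations, and collapse the inner sum using the Taylor-type identity already available. Inserting the definition of $L_{\mathbf{m}}^{(\alpha-\frac{n}{r})}(u)$ into the left-hand side produces a double sum over $\mathbf{m}\in\mathscr{P}$ and $\mathbf{k}\subset\mathbf{m}$ with general term $d_{\mathbf{m}}\frac{(\alpha)_{\mathbf{m}}}{(\frac{n}{r})_{\mathbf{m}}}(-1)^{|\mathbf{k}|}\binom{\mathbf{m}}{\mathbf{k}}\frac{1}{(\alpha)_{\mathbf{k}}}\Phi_{\mathbf{k}}(u)\Phi_{\mathbf{m}}(z)$. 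First I would fix $u\in\Omega$ and $z\in\mathcal{D}$ and verify that this double series converges absolutely, so that the order of summation may be exchanged to $\sum_{\mathbf{k}}\sum_{\mathbf{m}\supset\mathbf{k}}$; the bound $|\Phi_{\mathbf{m}}(z)|\le|z|^{|\mathbf{m}|}$ from Lemma~\ref{thm:FK,Thm12.1.1} (with $|z|<1$), together with the polynomial bound on $\gamma_{\mathbf{k}}$ from Lemma~\ref{thm:estimate of shifted Jack1} and the shifted-factorial estimate of Corollary~\ref{thm:ineq for generalized shifted factorial 2}, makes this the same type of majorization already carried out in the proof of Theorem~\ref{thm:spherical Taylor expan lem}.

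Once the sums are interchanged, the inner sum over $\mathbf{m}\supset\mathbf{k}$ can be evaluated in closed form. Using the identity $\binom{\mathbf{m}}{\mathbf{k}}=d_{\mathbf{k}}\gamma_{\mathbf{k}}(\mathbf{m}-\rho)/(\frac{n}{r})_{\mathbf{k}}$ recorded above, and then part (1) of Theorem~\ref{thm:spherical Taylor expan lem} with $w=z$, one obtains
\[
\sum_{\mathbf{m}\in\mathscr{P}}d_{\mathbf{m}}\frac{(\alpha)_{\mathbf{m}}}{(\frac{n}{r})_{\mathbf{m}}}\binom{\mathbf{m}}{\mathbf{k}}\Phi_{\mathbf{m}}(z)=\frac{d_{\mathbf{k}}}{(\frac{n}{r})_{\mathbf{k}}}(\alpha)_{\mathbf{k}}\,\Delta(e-z)^{-\alpha}\,\Phi_{\mathbf{k}}(z(e-z)^{-1}).
\]
The crucial feature is that the factor $(\alpha)_{\mathbf{k}}$ produced here cancels exactly against the $1/(\alpha)_{\mathbf{k}}$ coming from $L_{\mathbf{m}}$, so the left-hand side collapses to
\[
\Delta(e-z)^{-\alpha}\sum_{\mathbf{k}\in\mathscr{P}}(-1)^{|\mathbf{k}|}\frac{d_{\mathbf{k}}}{(\frac{n}{r})_{\mathbf{k}}}\Phi_{\mathbf{k}}(u)\,\Phi_{\mathbf{k}}(z(e-z)^{-1}).
\]
It then remains to recognise the surviving $\mathbf{k}$-sum as the $K$-average of an exponential. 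I would invoke the spherical expansion of the exponential, $\int_{K}e^{(ku|\xi)}\,dk=\sum_{\mathbf{k}\in\mathscr{P}}\frac{d_{\mathbf{k}}}{(\frac{n}{r})_{\mathbf{k}}}\Phi_{\mathbf{k}}(u)\Phi_{\mathbf{k}}(\xi)$ (see \cite{FK}), and specialise $\xi=-z(e-z)^{-1}$; since $\Phi_{\mathbf{k}}$ is homogeneous of degree $|\mathbf{k}|$, the sign $(-1)^{|\mathbf{k}|}$ is absorbed and the sum becomes $\int_{K}e^{-(ku|z(e-z)^{-1})}\,dk$, which is precisely the integral on the right-hand side.

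Finally, to cover all $\alpha\in\mathbb{C}$ — including the values at which some $(\alpha)_{\mathbf{k}}$ vanishes, where the cancellation above is only formal — I would argue by analytic continuation in $\alpha$: for fixed $u\in\Omega$ and $z\in\mathcal{D}$ each $L_{\mathbf{m}}^{(\alpha-\frac{n}{r})}(u)$ is a polynomial in $\alpha$ (the ratio $(\alpha)_{\mathbf{m}}/(\alpha)_{\mathbf{k}}$ has no poles for $\mathbf{k}\subset\mathbf{m}$), so the left-hand side is holomorphic in $\alpha$, while the right-hand side $\Delta(e-z)^{-\alpha}\int_{K}e^{-(ku|z(e-z)^{-1})}\,dk$ is manifestly entire in $\alpha$; agreement on the open dense set where all $(\alpha)_{\mathbf{k}}\neq0$ then forces agreement everywhere. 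The main obstacle I anticipate is precisely the absolute-convergence estimate justifying the interchange of summation, together with the locally uniform convergence (in $\alpha$ over compacta) needed for the analytic-continuation step; once these are in place, the algebraic collapse via Theorem~\ref{thm:spherical Taylor expan lem} and the exponential expansion is routine.
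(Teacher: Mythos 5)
Your proof is correct, and it takes a genuinely different route from the paper's for the substantive part of the argument. The paper does not derive the identity at all in the range $\alpha>\frac{n}{r}-1$: it simply cites Proposition~2.8 of \cite{Dib} there, and devotes its entire written proof to the absolute-convergence estimate needed to pass to all $\alpha\in\mathbb{C}$ by analytic continuation. You instead prove the identity internally: expand $L_{\mathbf{m}}^{(\alpha-\frac{n}{r})}$, interchange the two sums, collapse the inner $\mathbf{m}$-sum with Theorem~\ref{thm:spherical Taylor expan lem}\,(1) (your closed form for $\sum_{\mathbf{m}}d_{\mathbf{m}}\frac{(\alpha)_{\mathbf{m}}}{(\frac{n}{r})_{\mathbf{m}}}\binom{\mathbf{m}}{\mathbf{k}}\Phi_{\mathbf{m}}(z)$ is exactly right), and resum the surviving $\mathbf{k}$-series via $\int_{K}e^{(ku|\xi)}\,dk=\sum_{\mathbf{k}}\frac{d_{\mathbf{k}}}{(\frac{n}{r})_{\mathbf{k}}}\Phi_{\mathbf{k}}(u)\Phi_{\mathbf{k}}(\xi)$, which is indeed a standard fact from \cite{FK} whose one-argument special case the paper itself invokes at the end of its own estimate. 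The majorization you need to justify the interchange is literally the computation the paper performs --- bound $|(\alpha)_{\mathbf{m}}/(\alpha)_{\mathbf{k}}|$ by Lemma~\ref{thm:ineq for generalized shifted factorial}, bound $|\Phi_{\mathbf{m}}(z)|$ by Lemma~\ref{thm:FK,Thm12.1.1}, use the positivity of $\binom{\mathbf{m}}{\mathbf{k}}$, and resum with Theorem~\ref{thm:spherical Taylor expan lem} to get the majorant $(1-a_{1})^{-r|\alpha|-dr(r-1)}e^{\frac{a_{1}}{1-a_{1}}\tr{u}}$ --- so no extra work is hidden there, and the same bound, being locally uniform in $\alpha$, supplies the holomorphy needed for your final continuation step (which is genuinely necessary, since your cancellation of $(\alpha)_{\mathbf{k}}$ against $1/(\alpha)_{\mathbf{k}}$ is only legitimate off the zero set of the $(\alpha)_{\mathbf{k}}$). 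What your route buys is self-containedness: the lemma becomes a consequence of results already established in Section~2 rather than of an external reference. What the paper's route buys is brevity.
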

\begin{proof}
By referring to \cite{Dib} (see Proposition\,2.\,8), (\ref{eq:generating fnc of Laguerre}) holds for $\alpha >\frac{n}{r}-1=\frac{d}{2}(r-1)$. 
Moreover, the right hand side of (\ref{eq:generating fnc of Laguerre}) is well defined for any $\alpha \in \mathbb{C}$.  
Hence, by analytic continuation, it is sufficient to show the absolute convergence of the left hand side under the assumption. 
%Let $z=u^{'}\sum_{j=1}^{r}a_{j}c_{j} \in \mathcal{D}$ with $u' \in U$, $1>a_{1}\geq \ldots\geq a_{r}\geq 0$. 
By Lemmas\,\ref{thm:ineq for generalized shifted factorial}, \ref{thm:FK,Thm12.1.1}, \ref{thm:positivity of shifted Jack} and \ref{thm:spherical Taylor expan lem}, 
\begin{align}
\sum_{\mathbf{m} \in \mathscr{P}}|L_{\mathbf{m}}^{\left(\alpha -\frac{n}{r}\right)}(u)\Phi_{\mathbf{m}}(z)|
&\leq \sum_{\mathbf{m} \in \mathscr{P}}\sum_{\mathbf{k}\subset \mathbf{m}}
\left|d_{\mathbf{m}}\frac{(\alpha)_{\mathbf{m}}}{\left(\frac{n}{r}\right)_{\mathbf{m}}}
\binom{\mathbf{m}}{\mathbf{k}}\frac{(-1)^{|\mathbf{k}|}}{(\alpha)_{\mathbf{k}}}\Phi_{\mathbf{k}}(u)\right|\Phi_{\mathbf{m}}(a_{1}) \nonumber \\
&\leq \sum_{\mathbf{k} \in \mathscr{P}}d_{\mathbf{k}}\frac{1}{\left(\frac{n}{r}\right)_{\mathbf{k}}}\frac{1}{(|\alpha|+d(r-1))_{\mathbf{k}}}\Phi_{\mathbf{k}}(u) \nonumber \\
{} & \quad \sum_{\mathbf{m} \in \mathscr{P}}d_{\mathbf{m}}\frac{(|\alpha|+d(r-1))_{\mathbf{m}}}{\left(\frac{n}{r}\right)_{\mathbf{m}}}\gamma_{\mathbf{k}}(\mathbf{m}-\rho)\Phi_{\mathbf{m}}(a_{1}) \nonumber \\
&=(1-a_{1})^{-r|\alpha|-dr(r-1)}\sum_{\mathbf{k} \in \mathscr{P}}d_{\mathbf{k}}\frac{1}{\left(\frac{n}{r}\right)_{\mathbf{k}}}\Phi_{\mathbf{k}}\left(\frac{a_{1}}{1-a_{1}}u\right) \nonumber \\
%\label{eq:absolute conv for MLP}
&=(1-a_{1})^{-r|\alpha|-dr(r-1)}e^{\frac{a_{1}}{1-a_{1}}\tr{u}}< \infty. \nonumber
\end{align}
\end{proof}
The multivariate Laguerre polynomials also satisfy with the following differential equation.  
\begin{lem}
\label{thm:differential Lemma for Laguerre}
Let us consider the operators $D_{\alpha}^{(1)}$. 
\begin{equation}
D_{\alpha}^{(1)}=\tr{(-u\nabla_{u}^{2}-\alpha\nabla_{u}+u-\alpha{e})}.
\end{equation}
We have
\begin{equation}
\label{eq:differential equation for Laguerre}
D_{\alpha}^{(1)}\psi_{\mathbf{m}}^{(\alpha)}(u)=2|\mathbf{m}|\psi_{\mathbf{m}}^{(\alpha)}(u).
\end{equation}
\end{lem}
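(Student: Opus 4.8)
The plan is to conjugate $D_{\alpha}^{(1)}$ by the Gaussian factor $e^{-\tr u}$, reducing the claim to an eigenvalue identity for the bare Laguerre polynomials, and then to verify that identity in one stroke through the generating function of Lemma~\ref{thm:generating fnc of Laguerre and MP}.

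First I would record how $\nabla_u$ interacts with the factor $e^{-\tr u}$. Writing $\tr u=(u|e)$ so that $\nabla(\tr u)=e$, the product rules of Lemma~\ref{thm:derivation Lemma} give, for scalar $f$, $\nabla(e^{-\tr u}f)=e^{-\tr u}(\nabla f-fe)$ and, for $V$-valued $g$, $\nabla(e^{-\tr u}g)=e^{-\tr u}(\nabla g-g)$, together with the auxiliary identity $\nabla(fe)=\nabla f$. Iterating these yields
\begin{align}
\nabla^{2}(e^{-\tr u}f)=e^{-\tr u}\bigl(\nabla^{2}f-2\nabla f+fe\bigr). \nonumber
\end{align}
Substituting into $D_{\alpha}^{(1)}=\tr(-u\nabla_{u}^{2}-\alpha\nabla_{u}+u-\alpha e)$ and using $\tr(ab)=(a|b)$, $ue=u$, $\tr e=r$, all terms proportional to $\tr u$ and to $r$ cancel, leaving the clean reduction
\begin{align}
D_{\alpha}^{(1)}\bigl(e^{-\tr u}f\bigr)=e^{-\tr u}\,Mf,\qquad M:=-\tr(u\nabla^{2})+2\tr(u\nabla)-\alpha\tr(\nabla). \nonumber
\end{align}
(In rank one $M$ is exactly the operator turning the classical Laguerre equation into $Mf=2mf$, a convenient sanity check.) Since $\psi_{\mathbf{m}}^{(\alpha)}=e^{-\tr u}L_{\mathbf{m}}^{(\alpha-\frac{n}{r})}(2u)$, it now suffices to prove $M\,L_{\mathbf{m}}^{(\alpha-\frac{n}{r})}(2u)=2|\mathbf{m}|\,L_{\mathbf{m}}^{(\alpha-\frac{n}{r})}(2u)$.

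To treat all $\mathbf{m}$ simultaneously I would assemble the generating function. Replacing $u$ by $2u$ in Lemma~\ref{thm:generating fnc of Laguerre and MP} gives, with $\zeta:=z(e-z)^{-1}$,
\begin{align}
H(u,z):=\sum_{\mathbf{m}\in\mathscr{P}}L_{\mathbf{m}}^{(\alpha-\frac{n}{r})}(2u)\Phi_{\mathbf{m}}(z)=\Delta(e-z)^{-\alpha}\int_{K}e^{-2(ku|\zeta)}\,dk. \nonumber
\end{align}
Because each $\Phi_{\mathbf{m}}$ is homogeneous of degree $|\mathbf{m}|$, the desired eigenvalue relation is equivalent to the single operator identity $MH=2E_{z}H$, where $E_{z}$ is the Euler (degree-counting) operator in $z$; matching coefficients of the linearly independent $\Phi_{\mathbf{m}}(z)$ then finishes the proof. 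The absolute, locally uniform convergence established in Lemma~\ref{thm:generating fnc of Laguerre and MP} justifies differentiating under both the sum and the $K$-integral.

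It remains to verify $MH=2E_{z}H$, which is the crux. On the left, $\Delta(e-z)^{-\alpha}$ is $u$-free so $M$ enters the integrand; for fixed $k$ the integrand is $e^{-(u|\eta)}$ with $\eta:=2k^{-1}\zeta$, and using $\nabla e^{-(u|\eta)}=-\eta\,e^{-(u|\eta)}$ and $\nabla^{2}e^{-(u|\eta)}=\eta^{2}e^{-(u|\eta)}$ (the Jordan square) one gets $M_{u}e^{-(u|\eta)}=e^{-(u|\eta)}\bigl(-(u|\eta^{2})-2(u|\eta)+\alpha\tr\eta\bigr)$. Since $k^{-1}\in\Aut(V)$ preserves the Jordan product and the trace, substituting $\eta=2k^{-1}\zeta$ turns the bracket into $-4(ku|\zeta^{2})-4(ku|\zeta)+2\alpha\tr\zeta$. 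On the right, Lemma~\ref{thm:derivation Lemma}(4) gives $E_{z}\Delta(e-z)^{-\alpha}=\alpha\,\tr(\zeta)\,\Delta(e-z)^{-\alpha}$, while differentiating $\zeta(tz)=tz(e-tz)^{-1}$ at $t=1$ yields $E_{z}\zeta=\zeta+\zeta^{2}$, hence $E_{z}e^{-2(ku|\zeta)}=-2(ku|\zeta+\zeta^{2})e^{-2(ku|\zeta)}$. Combining the two contributions by the Leibniz rule produces exactly the integrand $\bigl(2\alpha\tr\zeta-4(ku|\zeta)-4(ku|\zeta^{2})\bigr)e^{-2(ku|\zeta)}$, so $MH=2E_{z}H$. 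I expect the main obstacle to be the careful gradient bookkeeping in the conjugation step, together with the two structural identities $\nabla(fe)=\nabla f$ and $E_{z}\zeta=\zeta+\zeta^{2}$; once these are secured the two sides match term by term and the eigenvalue $2|\mathbf{m}|$ drops out.
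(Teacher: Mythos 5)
Your proof is correct. Note, however, that the paper itself gives \emph{no} proof of this lemma: it is stated as one of the ``fundamental lemmas based on \cite{FK}, \cite{FW1}'' and implicitly quoted from the literature (where it is obtained via the recursion relations for Laguerre functions on symmetric cones, cf.\ Ding and Faraut--Wakayama). So there is nothing to compare line by line; what you supply is a self-contained derivation using only ingredients already present in the paper, which is a reasonable alternative route. The two halves of your argument both check out: the conjugation identity $D_{\alpha}^{(1)}(e^{-\tr u}f)=e^{-\tr u}\bigl(-\tr(u\nabla^{2}f)+2\tr(u\nabla f)-\alpha\tr(\nabla f)\bigr)$ is right, with the cancellation of the $\tr u$ and $\alpha r$ terms hinging on reading $\tr(-\alpha e)$ as $-\alpha r$ (which is indeed the paper's convention, as its proof of Theorem~\ref{thm:Differential equation for Psi} shows); and the generating-function step is clean, since $M_{u}e^{-(u|\eta)}=\bigl(-(u|\eta^{2})-2(u|\eta)+\alpha\tr\eta\bigr)e^{-(u|\eta)}$ with $\eta=2k^{-1}\zeta$ matches $2E_{z}$ applied to $\Delta(e-z)^{-\alpha}e^{-2(ku|\zeta)}$ exactly, using that $k^{-1}\in\Aut(V)$ preserves squares and traces and that $E_{z}\zeta=\zeta+\zeta^{2}$ (legitimately computed inside the associative subalgebra $\mathbb{R}[z]$). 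Two cosmetic caveats: the $V$-valued product rule $\nabla(hg)=g(\nabla h)+h(\nabla g)$ is not literally item (1) of Lemma~\ref{thm:derivation Lemma} (which only records the traced version), though it follows at once from the componentwise definition; and the coefficient extraction should be phrased as holding for $z$ in a neighbourhood of $0$, with the polynomial identity $ML_{\mathbf{m}}^{(\alpha-\frac{n}{r})}(2u)=2|\mathbf{m}|L_{\mathbf{m}}^{(\alpha-\frac{n}{r})}(2u)$ then extending to all $u$ by polynomiality. Neither affects the validity of the argument.
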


\section{Multivariate circular Jacobi polynomials}
In the first subsection of this section, we introduce a new multivariate orthogonal polynomial, 
$\phi_{\mathbf{m}}^{(d)}(\sigma;\alpha,\nu)=\phi_{\mathbf{m}}^{(\alpha,\nu)}(\sigma)$, 
%$\phi_{\mathbf{m}}^{(\alpha,\nu)}(\sigma)$ 
which is a 2-parameter deformation of the spherical polynomial. 
This is also regarded as a multivariate analogue of the circular Jacobi polynomial. 
Hence, we call this polynomial the {\it{multivariate circular Jacobi (MCJ) polynomial}} that degenerates to a 1-parameter deformation of the usual circular Jacobi polynomial, $\phi_{m}^{(\alpha)}(e^{i\theta})$, in the one variable case. 
Further, the weight function of its orthogonality relation coincides with the circular Jacobi ensemble defined by Bourgade et al.\,\cite{BNR}.

We derive a generating function of $\phi_{\mathbf{m}}^{(\alpha,\nu)}$ in subsection\,3.2 and a pseudo-differential equation for $\Psi_{\mathbf{m}}^{(\alpha,\nu)}$ in the subsection\,3.3. 
In case of the multiplicity $d=2$, we give a determinant formula for the MCJ polynomials in subsection\,3.4.  
Moreover, we study the one variable case in more detail in subsection\,3.5. 
Finally, we describe future work for $\phi_{\mathbf{m}}^{(\alpha,\nu)}$.

Unless otherwise specified, we have assumed $\alpha >\frac{n}{r}-1=\frac{d}{2}(r-1)$ and $\nu \in \mathbb{R}$ in this section.

\subsection{Definitions and orthogonality}
Following Section\,2, we introduce some function space, functions that become complete orthogonal bases and unitary transformations required to provide the MCJ polynomials.

\noindent
{\bf{(2)}}\,\,$\Psi_{\mathbf{m}}^{(\alpha,\nu)}$\,;\,Modified Fourier transform of $\psi_{\mathbf{m}}^{(\alpha)}$
\begin{align}
H^{2}_{\alpha,\nu}(V)^{K}&:=\{\Psi:V \longrightarrow \mathbb{C} \mid \Psi \in H^{2}(V) \text{ is $K$-invariant and } \|\Psi \|_{\alpha,\nu,V}^{2}<\infty\}, \nonumber \\
\|\Psi\|_{\alpha,\nu,V}^{2}&:=\frac{2^{r\alpha}}{\Gamma_{\Omega}(\alpha)}\left|\Gamma_{\Omega}\left(\frac{1}{2}\left(\alpha +\frac{n}{r}\right)+i\nu \right)\right|^{2}\|\Psi\|_{V}^{2} \nonumber \\
&=\frac{\widetilde{c_{0}}}{(2\pi)^{n}}\frac{2^{r\alpha}}{\Gamma_{\Omega}(\alpha)}\left|\Gamma_{\Omega}\left(\frac{1}{2}\left(\alpha +\frac{n}{r}\right)+i\nu \right)\right|^{2} %\nonumber \\
%{} & \quad 
\int_{\mathbb{R}^{r}}|\Psi(\lambda)|^{2}\prod_{1\leq p<q\leq r}|\lambda_{p}-\lambda_{q}|^{d}\,d\lambda_{1}\cdots{d\lambda_{r}}, \nonumber \\
\Psi_{\mathbf{m}}^{(\alpha,\nu)}(t)&:=\Delta (e-it)^{-\frac{1}{2}\left(\alpha +\frac{n}{r}\right)-i\nu}\widetilde{\Psi_{\mathbf{m}}^{(\alpha,\nu)}}(t), \nonumber \\
\widetilde{\Psi_{\mathbf{m}}^{(\alpha,\nu)}}(t)&:=d_{\mathbf{m}}\frac{(\alpha)_{\mathbf{m}}}{\left(\frac{n}{r}\right)_{\mathbf{m}}}\sum_{\mathbf{k}\subset \mathbf{m}}(-1)^{|\mathbf{k}|}\binom{\mathbf{m}}{\mathbf{k}}\frac{\left(\frac{1}{2}\left(\alpha +\frac{n}{r}\right)+i\nu\right)_{\mathbf{k}}}{(\alpha)_{\mathbf{k}}}\Phi_{\mathbf{k}}(2(e-it)^{-1}). \nonumber %=\frac{(\alpha)_{m}}{m!}e^{-u}{_{1}F_1}\left(\begin{matrix}-m\\ \alpha  \end{matrix};2u\right)
\end{align}
Here, $\lambda =\sum_{j=1}^{r}\lambda_{j}c_{j}$ and we choose the branch of $\Delta (e-it)^{-\frac{1}{2}\left(\alpha +\frac{n}{r}\right)-i\nu}$ which takes the value $1$ at $t=0$.

\noindent
{\bf{(3)}}\,\,$\phi_{\mathbf{m}}^{(\alpha,\nu)}$\,;\,MCJ polynomials
\begin{align}
% H^{2}_{\alpha,\nu}(\Sigma)^{K}&:=\{\phi:\Sigma \longrightarrow \mathbb{C} \mid \phi \text{ is $K$-invariant and analytic in $\mathcal{D}$, and } \|\phi\|_{\alpha,\nu,\Sigma}^{2}<\infty\}, \nonumber \\
H^{2}_{\alpha,\nu}(\Sigma)^{K}&:=\{\phi:\Sigma \longrightarrow \mathbb{C} \mid \phi \text{ is $K$-invariant and continued analytically to $\mathcal{D}$}  \nonumber \\
{} & \quad \quad \quad \quad \quad \quad \quad \quad \text{as a holomorphic function which satisfies with } \|\phi\|_{\alpha,\nu,\Sigma}^{2}<\infty \}, \nonumber \\
\|\phi\|_{\alpha,\nu,\Sigma}^{2}&:=\frac{1}{(2\pi)^{n}}\frac{1}{\Gamma_{\Omega}(\alpha)}\left|\Gamma_{\Omega}\left(\frac{1}{2}\left(\alpha +\frac{n}{r}\right)+i\nu \right)\right|^{2}
\int_{\Sigma}|\phi(\sigma)|^{2}|\Delta (e-\sigma)^{\frac{1}{2}(\alpha -\frac{n}{r})+i\nu}|^{2}\,d\mu(\sigma)  \nonumber \\
&=\frac{\widetilde{c_{0}}}{(2\pi)^{n}}\frac{1}{\Gamma_{\Omega}(\alpha)}\left|\Gamma_{\Omega}\left(\frac{1}{2}\left(\alpha +\frac{n}{r}\right)+i\nu \right)\right|^{2} \nonumber \\
{} & \quad \cdot \int_{\mathcal{S}^{r}}|\phi(e^{i\theta})|^{2}\prod_{j=1}^{r}|(1-e^{i\theta_{j}})^{\frac{1}{2}\left(\alpha -\frac{n}{r}\right)+i\nu}|^{2}
\prod_{1\leq p<q\leq r}|e^{i\theta_{p}}-e^{i\theta_{q}}|^{d}\,d\theta_{1}\cdots d\theta_{r}. \nonumber %\\
% \phi_{\mathbf{m}}^{(\alpha, \nu)}(\sigma)&:=d_{\mathbf{m}}\frac{(\alpha)_{\mathbf{m}}}{\left(\frac{n}{r}\right)_{\mathbf{m}}}
% \sum_{\mathbf{k}\subset \mathbf{m}}(-1)^{|\mathbf{k}|}\binom{\mathbf{m}}{\mathbf{k}}\frac{\left(\frac{1}{2}\left(\alpha +\frac{n}{r}\right)+i\nu \right)_{\mathbf{k}}}{(\alpha)_{\mathbf{k}}}\Phi_{\mathbf{k}}(e-\sigma).
\end{align}
Here, we define the multivariate circular Jacobi polynomial by
\begin{equation}
\label{eq:def of MCJ}
\phi_{\mathbf{m}}^{(d)}(\sigma;\alpha,\nu)=\phi_{\mathbf{m}}^{(\alpha, \nu)}(\sigma):=d_{\mathbf{m}}\frac{(\alpha)_{\mathbf{m}}}{\left(\frac{n}{r}\right)_{\mathbf{m}}}
\sum_{\mathbf{k}\subset \mathbf{m}}(-1)^{|\mathbf{k}|}\binom{\mathbf{m}}{\mathbf{k}}\frac{\left(\frac{1}{2}\left(\alpha +\frac{n}{r}\right)+i\nu \right)_{\mathbf{k}}}{(\alpha)_{\mathbf{k}}}\Phi_{\mathbf{k}}(e-\sigma).
\end{equation}

The main purpose of this subsection is to show that these polynomials form the complete orthogonal basis of $H_{\alpha, \nu}^{2}(\Sigma)^{K}$ and to explicitly write their orthogonal relations. 
To achieve that purpose, we introduce a modified Fourier transform $\mathcal{F}_{\alpha}^{-1}$ for a function $\psi$ on $\Omega$ and the second inverse modified Cayley transform $\mathcal{C}_{\alpha,\nu}^{-1}$ %which is unlike $C_{\alpha}^{-1}$ 
as follows. 
\begin{align}
(\mathcal{F}_{\alpha}^{-1}\psi)(t)&:=\frac{1}{\Gamma_{\Omega}\left(\frac{1}{2}\left(\alpha +\frac{n}{r}\right)\right)}(F^{-1}(\Delta(u)^{\frac{1}{2}(\alpha -\frac{n}{r})}\psi))(t) \\
&=\frac{1}{\Gamma_{\Omega}\left(\frac{1}{2}\left(\alpha +\frac{n}{r}\right)\right)}\int_{\Omega}e^{i(t|u)}\psi(u)\Delta(u)^{\frac{1}{2}\left(\alpha -\frac{n}{r}\right)}\,du, \\
%(\mathcal{C}_{\alpha}^{-1}\Psi)(\sigma)&:=2^{\frac{\alpha}{2}r-n+ir\nu}\Delta(e-\sigma)^{-\frac{1}{2}\left(\alpha -\frac{n}{r}\right)-i\nu}\Psi(c(\sigma)).
(\mathcal{C}_{\alpha,\nu}^{-1}\Psi)(\sigma)
&:=\Delta(e-ic(\sigma))^{\frac{1}{2}\left(\alpha +\frac{n}{r}\right)+i\nu}\Psi(c(\sigma))
=\Delta\left(\frac{e-\sigma}{2}\right)^{-\frac{1}{2}\left(\alpha +\frac{n}{r}\right)-i\nu}\!\!\!\!\Psi(c(\sigma)).
\end{align}
These give the following unitary isomorphisms. %from $L^{2}_{\alpha}(\Omega)^{K}$ onto $\mathcal{H}^{2}(V)^{K}$. 
\begin{thm}
\label{thm:main theorem1}
%Let $\alpha >\frac{n}{r}-1, \nu \in \mathbb{R}$. 

\noindent
{\rm{(1)}}\,%We obtain 
\begin{align}
\begin{array}{ccccc}
\mathcal{F}_{\alpha,\nu}^{-1}:=\mathcal{F}_{\alpha+2i\nu}^{-1}:& L^{2}_{\alpha}(\Omega)^{K} & {\xrightarrow{\simeq}} & H_{\alpha, \nu}^{2}(V)^{K} & \text{{\rm{(unitary)}}.}  \\
 & \rotatebox{90}{$\in$} & & \rotatebox{90}{$\in$} & \nonumber \\
 & \psi_{\mathbf{m}}^{(\alpha)} & \longmapsto & \Psi_{\mathbf{m}}^{(\alpha,\nu)} & \nonumber
\end{array}
\end{align}
In particular, $\{\Psi_{\mathbf{m}}^{(\alpha,\nu)}\}_{\mathbf{m} \in \mathscr{P}}$ form the complete orthogonal basis of $H_{\alpha, \nu}^{2}(V)^{K}$ and for all $\mathbf{m},\mathbf{n} \in \mathscr{P}$, 
% \begin{align}
% (\Psi_{\mathbf{m}}^{(\alpha,\nu)},\Psi_{\mathbf{n}}^{(\alpha,\nu)})_{V,\alpha,\nu}
% &=\frac{2^{r\alpha}}{\Gamma_{\Omega}(\alpha)}\frac{\left|\Gamma_{\Omega}\left(\frac{1}{2}\left(\alpha +\frac{n}{r}\right)+i\nu \right)\right|^{2}}{(2\pi)^{n}}
% \int_{V}\Psi_{\mathbf{m}}^{(\alpha,\nu)}(t)\overline{\Psi_{\mathbf{n}}^{(\alpha,\nu)}(t)}dt \nonumber \\
% &=d_{\mathbf{m}}\frac{(\alpha)_{\mathbf{m}}}{\left(\frac{n}{r}\right)_{\mathbf{m}}}\delta_{\mathbf{m}\mathbf{n}}.  %=:(\Psi_{\mathbf{m}}^{(\alpha)},
% \end{align}
% \begin{align}
% \frac{2^{r\alpha}}{\Gamma_{\Omega}(\alpha)}\frac{\left|\Gamma_{\Omega}\left(\frac{1}{2}\left(\alpha +\frac{n}{r}\right)+i\nu \right)\right|^{2}}{(2\pi)^{n}}
% \int_{V}\Psi_{\mathbf{m}}^{(\alpha,\nu)}(t)\overline{\Psi_{\mathbf{n}}^{(\alpha,\nu)}(t)}dt
% =d_{\mathbf{m}}\frac{(\alpha)_{\mathbf{m}}}{\left(\frac{n}{r}\right)_{\mathbf{m}}}\delta_{\mathbf{m}\mathbf{n}}.  %=:(\Psi_{\mathbf{m}}^{(\alpha)},
% \end{align}
\begin{align}
\frac{1}{(2\pi)^{n}}\int_{V}\Psi_{\mathbf{m}}^{(\alpha,\nu)}(t)\overline{\Psi_{\mathbf{n}}^{(\alpha,\nu)}(t)}\,dt
=d_{\mathbf{m}}\frac{\Gamma_{\Omega}(\alpha+\mathbf{m})}{\left(\frac{n}{r}\right)_{\mathbf{m}}}\frac{1}{\left|\Gamma_{\Omega}\left(\frac{1}{2}\left(\alpha +\frac{n}{r}\right)+i\nu\right)\right|^{2}}\delta_{\mathbf{m}\mathbf{n}}.  %=:(\Psi_{\mathbf{m}}^{(\alpha)},
\end{align}
\noindent
{\rm{(2)}}\,%We obtain
\begin{align}
\begin{array}{ccccc}
\mathcal{C}_{\alpha,\nu}^{-1}:& H_{\alpha, \nu}^{2}(V)^{K} & {\xrightarrow{\simeq}} & H_{\alpha, \nu}^{2}(\Sigma)^{K} & \text{{\rm{(unitary)}}.}  \\
 & \rotatebox{90}{$\in$} & & \rotatebox{90}{$\in$} & \nonumber \\
 & \Psi_{\mathbf{m}}^{(\alpha,\nu)} & \longmapsto & \phi_{\mathbf{m}}^{(\alpha,\nu)} & \nonumber
\end{array}
\end{align}
Furthermore, the MCJ polynomials form the complete orthogonal basis of $H_{\alpha, \nu}^{2}(\Sigma)^{K}$ and for all $\mathbf{m},\mathbf{n} \in \mathscr{P}$, 
\begin{align}
\label{eq:MCJ orthogonal}
&\frac{1}{(2\pi)^{n}}\int_{\Sigma}\phi_{\mathbf{m}}^{(\alpha,\nu)}(\sigma)\overline{\phi_{\mathbf{n}}^{(\alpha,\nu)}(\sigma)}|\Delta (e-\sigma)^{\frac{1}{2}(\alpha -\frac{n}{r})+i\nu}|^{2}\,d\mu(\sigma)  \nonumber \\
&=\frac{\widetilde{c_{0}}}{(2\pi)^{n}}\int_{\mathcal{S}^{r}}\phi_{\mathbf{m}}^{(\alpha,\nu)}(e^{i\theta})\overline{\phi_{\mathbf{n}}^{(\alpha,\nu)}(e^{i\theta})} %\nonumber \\
%{} & \quad \cdot 
\prod_{j=1}^{r}|(1-e^{i\theta_{j}})^{\frac{1}{2}\left(\alpha -\frac{n}{r}\right)+i\nu}|^{2}%(1-e^{-i\theta_{j}})^{\frac{1}{2}\left(\alpha -\frac{n}{r}\right)-i\nu}
\prod_{1\leq k<l\leq r}|e^{i\theta_{k}}-e^{i\theta_{l}}|^{d}\,d\theta_{1}\cdots d\theta_{r} \nonumber \\
&=d_{\mathbf{m}}\frac{\Gamma_{\Omega}(\alpha+\mathbf{m})}{\left(\frac{n}{r}\right)_{\mathbf{m}}}\frac{1}{\left|\Gamma_{\Omega}\left(\frac{1}{2}\left(\alpha +\frac{n}{r}\right)+i\nu\right)\right|^{2}}\delta_{\mathbf{m}\mathbf{n}}.
\end{align}
\end{thm}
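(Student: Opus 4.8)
The plan is to prove the two unitary statements separately, following the one-variable template of Proposition~\ref{prop:She}: in each case I first identify the image of the basis, then verify the isometry from a single integral formula, and finally deduce completeness and the explicit norms. Throughout write $\beta:=\frac{1}{2}(\alpha+\frac{n}{r})+i\nu$, so that $\mathcal{F}_{\alpha,\nu}^{-1}=\mathcal{F}_{\alpha+2i\nu}^{-1}$ carries the normalizing constant $\Gamma_{\Omega}(\beta)^{-1}$ and the weight $\Delta(u)^{\frac{1}{2}(\alpha-\frac{n}{r})+i\nu}$. For the image in part (1) I would apply $\mathcal{F}_{\alpha,\nu}^{-1}$ termwise to $\psi_{\mathbf{m}}^{(\alpha)}$: using $\tr u=(e|u)$ to write $e^{i(t|u)-\tr u}=e^{-(e-it|u)}$, each summand becomes $\int_{\Omega}e^{-(e-it|u)}\Phi_{\mathbf{k}}(2u)\Delta(u)^{\beta-\frac{n}{r}}\,du$, which Lemma~\ref{thm:int formula}, with $z=e-it\in T_{\Omega}$ and $p=\Phi_{\mathbf{k}}\in\mathcal{P}_{\mathbf{k}}$, evaluates to $2^{|\mathbf{k}|}(\beta)_{\mathbf{k}}\Gamma_{\Omega}(\beta)\Delta(e-it)^{-\beta}\Phi_{\mathbf{k}}((e-it)^{-1})$. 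Cancelling $\Gamma_{\Omega}(\beta)$ against the normalization and folding the powers of $2$ back into $\Phi_{\mathbf{k}}(2(e-it)^{-1})$ reproduces $\Psi_{\mathbf{m}}^{(\alpha,\nu)}$ exactly. Since Lemma~\ref{thm:int formula} requires $\Re\beta$ above the Gindikin threshold, I would first prove this in a strip of $\nu$ where it holds and extend to all real $\nu$ by analytic continuation.

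For the isometry in part (1), I would factor $\mathcal{F}_{\alpha,\nu}^{-1}=\Gamma_{\Omega}(\beta)^{-1}\,F^{-1}\circ M$, with $M$ multiplication by $\Delta(u)^{\frac{1}{2}(\alpha-\frac{n}{r})+i\nu}$. Because $|\Delta(u)^{i\nu}|=1$ on $\Omega$, $M$ is (up to the explicit scalar in the norms) a unitary $L_{\alpha}^{2}(\Omega)^{K}\to L^{2}(\Omega)^{K}$, and $F^{-1}$ is unitary onto $H^{2}(V)^{K}$ by Lemma~\ref{thm:Plancherel,[FK] Thm 9.4.1}; tracking the constants $2^{r\alpha}/\Gamma_{\Omega}(\alpha)$ and $|\Gamma_{\Omega}(\beta)|^{2}$ built into $\|\cdot\|_{\alpha,\nu,V}$ gives $\|\mathcal{F}_{\alpha,\nu}^{-1}\psi\|_{\alpha,\nu,V}=\|\psi\|_{\alpha,\Omega}$. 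Surjectivity is automatic from this factorization, since $H_{\alpha,\nu}^{2}(V)^{K}$ equals $H^{2}(V)^{K}$ as a set (their norms differ only by that scalar), and completeness of $\{\Psi_{\mathbf{m}}^{(\alpha,\nu)}\}$ follows from that of $\{\psi_{\mathbf{m}}^{(\alpha)}\}$; the orthogonality is then the image of $\|\psi_{\mathbf{m}}^{(\alpha)}\|_{\alpha,\Omega}^{2}=d_{\mathbf{m}}(\alpha)_{\mathbf{m}}/(\frac{n}{r})_{\mathbf{m}}$ after unwinding $\|\cdot\|_{\alpha,\nu,V}$.

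For part (2) the computation hinges on the Cayley identities $e-ic(\sigma)=2(e-\sigma)^{-1}$ and hence $2(e-ic(\sigma))^{-1}=e-\sigma$. Substituting $t=c(\sigma)$ into $\Psi_{\mathbf{m}}^{(\alpha,\nu)}$, the prefactor $\Delta(\frac{e-\sigma}{2})^{-\beta}=2^{r\beta}\Delta(e-\sigma)^{-\beta}$ of $\mathcal{C}_{\alpha,\nu}^{-1}$ cancels against $\Delta(e-ic(\sigma))^{-\beta}=2^{-r\beta}\Delta(e-\sigma)^{\beta}$, while $\Phi_{\mathbf{k}}(2(e-ic(\sigma))^{-1})=\Phi_{\mathbf{k}}(e-\sigma)$, so $\mathcal{C}_{\alpha,\nu}^{-1}\Psi_{\mathbf{m}}^{(\alpha,\nu)}=\phi_{\mathbf{m}}^{(\alpha,\nu)}$ as in (\ref{eq:def of MCJ}). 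For the isometry I would insert $\phi=\mathcal{C}_{\alpha,\nu}^{-1}\Psi$ into $\|\cdot\|_{\alpha,\nu,\Sigma}$ and observe that, with $\gamma:=\frac{1}{2}(\alpha-\frac{n}{r})+i\nu$, the exponents collapse, $-\beta+\gamma=-\frac{n}{r}$, reducing the integrand to $2^{r\alpha+n}|\Delta(e-\sigma)^{-n/r}|^{2}|\Psi(c(\sigma))|^{2}$; Lemma~\ref{thm:int for Shilov boundary}(2) applied to $|\Psi|^{2}\in L^{1}(V)$ then converts the $\Sigma$-integral into $2^{-n}\int_{V}|\Psi|^{2}$, the surviving $2^{r\alpha}$ matching the constant in $\|\cdot\|_{\alpha,\nu,V}$, so $\|\mathcal{C}_{\alpha,\nu}^{-1}\Psi\|_{\alpha,\nu,\Sigma}=\|\Psi\|_{\alpha,\nu,V}$. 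Finally Lemma~\ref{thm:int for Shilov boundary}(3) rewrites the orthogonality integral over $\Sigma$ as the trigonometric integral over $\mathcal{S}^{r}$ displayed in (\ref{eq:MCJ orthogonal}).

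I expect the main obstacle to be the functional-analytic bookkeeping rather than the algebra: justifying the termwise Fourier computation together with its continuation in $\nu$, and, more seriously, showing that $\mathcal{C}_{\alpha,\nu}^{-1}$ both lands in and exhausts $H_{\alpha,\nu}^{2}(\Sigma)^{K}$. The isometry alone yields only an isometric embedding; surjectivity in part (2) requires the biholomorphy of the Cayley map $c:\mathcal{D}\to H_{\Omega}$ together with compatibility of boundary values, so that the holomorphic extension of $\Psi$ to the tube $H_{\Omega}$ corresponds exactly to a holomorphic extension of $\phi$ to the disc $\mathcal{D}$, and conversely.
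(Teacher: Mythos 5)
Your proposal is correct and follows essentially the same route as the paper: factor $\mathcal{F}_{\alpha,\nu}^{-1}$ through the multiplication map and the Plancherel theorem (Lemma \ref{thm:Plancherel,[FK] Thm 9.4.1}), compute the basis images via Lemma \ref{thm:int formula} with $z=e-it$, identify $\mathcal{C}_{\alpha,\nu}^{-1}\Psi_{\mathbf{m}}^{(\alpha,\nu)}=\phi_{\mathbf{m}}^{(\alpha,\nu)}$ from the Cayley identity $e-ic(\sigma)=2(e-\sigma)^{-1}$, and convert the norms with Lemma \ref{thm:int for Shilov boundary}. The only superfluous step is the proposed analytic continuation in $\nu$: since $\nu$ is real, $\Re\beta=\tfrac{1}{2}(\alpha+\tfrac{n}{r})$ is independent of $\nu$ and already exceeds the threshold $\tfrac{d}{2}(r-1)$ under the standing assumption $\alpha>\tfrac{n}{r}-1$, so Lemma \ref{thm:int formula} applies directly.
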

\begin{proof}
{\rm{(1)}}\,Observing that 
\begin{align}
\begin{array}{cccc}
 L^{2}_{\alpha}(\Omega)^{K} & {\xrightarrow{\simeq}} & L^{2}(\Omega)^{K} & \text{{\rm{(unitary)}},} \nonumber \\
 \rotatebox{90}{$\in$} & & \rotatebox{90}{$\in$} & \nonumber \\
 \psi & \longmapsto & \left(\frac{2^{r\alpha}}{\Gamma_{\Omega}(\alpha)}\right)^\frac{1}{2}\Delta(u)^{\frac{1}{2}\left(\alpha -\frac{n}{r}\right)+i\nu}\psi & \nonumber
\end{array}
\end{align}
\begin{align}
\begin{array}{cccc}
 H_{\alpha, \nu}^{2}(V)^{K} & {\xrightarrow{\simeq}} & H^{2}(V)^{K} & \text{{\rm{(unitary)}}.} \nonumber \\
 \rotatebox{90}{$\in$} & & \rotatebox{90}{$\in$} & \nonumber \\
 \Psi & \longmapsto & \frac{2^{\frac{r\alpha}{2}}}{\Gamma_{\Omega}(\alpha)^\frac{1}{2}}\Gamma_{\Omega}\left(\frac{1}{2}\left(\alpha +\frac{n}{r}\right)+i\nu\right)\Psi  & \nonumber
\end{array}
\end{align}
In addition to using Lemma\,\ref{thm:Plancherel,[FK] Thm 9.4.1}, we immediately obtain the following unitary isomorphism $\mathcal{F}_{\alpha,\nu}^{-1}$. 
\begin{align}
\begin{array}{ccccccc}
L^{2}_{\alpha}(\Omega)^{K} \!\!\!\!& {\xrightarrow{\simeq}} &\!\!\! L^{2}(\Omega)^{K} \!\!\!& {\xrightarrow{\simeq}} &\!\!\! H^{2}(V)^{K} \!\!\!& {\xrightarrow{\simeq}} &\!\!\!\! H_{\alpha, \nu}^{2}(V)^{K}. \nonumber \\
\rotatebox{90}{$\in$} \!\!\!\!& &\!\!\! \rotatebox{90}{$\in$} \!\!\!& &\!\!\! \rotatebox{90}{$\in$} \!\!\!& &\!\!\!\!\!\!\!\! \rotatebox{90}{$\in$}  \nonumber \\
\psi \!\!\!\!& \longmapsto &\!\!\! \left(\frac{2^{r\alpha}}{\Gamma_{\Omega}(\alpha)}\right)^\frac{1}{2}\Delta(u)^{\frac{1}{2}\left(\alpha -\frac{n}{r}\right)+i\nu}\psi \!\!\!& \longmapsto &\!\!\! F^{-1}\left(\frac{2^{\frac{r\alpha}{2}}}{\Gamma_{\Omega}(\alpha)^\frac{1}{2}}\Delta(u)^{\frac{1}{2}\left(\alpha -\frac{n}{r}\right)+i\nu}\psi\right) \!\!\!& \longmapsto &\!\!\!\! \mathcal{F}_{\alpha,\nu}^{-1}(\psi)  \nonumber
\end{array}
\end{align}

Next, we evaluate the modified Fourier transform of $\psi_{\mathbf{m}}^{(\alpha)}$ which forms the complete orthogonal basis for $L^{2}_{\alpha}(\Omega)^{K}$. 
From Lemma\,\ref{thm:int formula}, we obtain %the evaluation of the modified Fourier transform for $\psi_{\mathbf{m}}^{(\alpha)}$ which form a complete orthogonal basis for $L^{2}_{\alpha}(\Omega)^{K}$. 
\begin{equation}
\mathcal{F}_{\alpha,\nu}^{-1}(e^{-\tr{u}}\Phi_{\mathbf{k}})(t)
=\left(\frac{1}{2}\left(\alpha +\frac{n}{r}\right)+i\nu\right)_{\mathbf{k}}\Delta (e-it)^{-\frac{1}{2}\left(\alpha +\frac{n}{r}\right)-i\nu}\Phi_{\mathbf{k}}((e-it)^{-1}). \nonumber
\end{equation}
Hence,
\begin{align}
\mathcal{F}_{\alpha,\nu}^{-1}(\psi_{\mathbf{m}}^{(\alpha)})(t)
&=\frac{1}{\Gamma\left(\frac{1}{2}\left(\alpha +\frac{n}{r}\right)+i\nu\right)}F^{-1}(\Delta(u)^{\frac{1}{2}\left(\alpha -\frac{n}{r}\right)+i\nu}\psi_{\mathbf{m}}^{(\alpha)})(t) \nonumber \\
&=d_{\mathbf{m}}\frac{(\alpha)_{\mathbf{m}}}{\left(\frac{n}{r}\right)_{\mathbf{m}}}\sum_{\mathbf{k}\subset \mathbf{m}}(-2)^{|\mathbf{k}|}\binom{\mathbf{m}}{\mathbf{k}}\frac{1}{(\alpha)_{\mathbf{k}}} \nonumber \\
{} & \quad \cdot \frac{1}{\Gamma\left(\frac{1}{2}\left(\alpha +\frac{n}{r}\right)+i\nu\right)}\int_{\Omega}e^{i(t|u)}\Delta(u)^{\frac{1}{2}\left(\alpha -\frac{n}{r}\right)+i\nu}e^{-\tr{u}}\Phi_{\mathbf{k}}(u)\,du \nonumber \\
&=\Psi_{\mathbf{m}}^{(\alpha, \nu)}(t).  \nonumber 
\end{align}
%Moreover, by Lem\,\ref{thm:Plancheral,[FK] Thm 9.4.1}, we deduce that%the modified Fourier transform provides a unitary isomorphism between the spaces $L^{2}_{\alpha}(\Omega)^{K}$ and $\mathcal{H}^{2}(V)^{K}$. 
%
Finally, we have
\begin{align}
d_{\mathbf{m}}\frac{(\alpha)_{\mathbf{m}}}{\left(\frac{n}{r}\right)_{\mathbf{m}}}\delta_{\mathbf{m}\mathbf{n}}
&=(\psi_{\mathbf{m}}^{(\alpha)},\psi_{\mathbf{n}}^{(\alpha)})_{\alpha, \Omega} \nonumber \\
&=(\mathcal{F}_{\alpha,\nu}^{-1}(\psi_{\mathbf{m}}^{(\alpha)}),\mathcal{F}_{\alpha,\nu}^{-1}(\psi_{\mathbf{n}}^{(\alpha)}))_{\alpha, \nu, V} \nonumber \\
&=\frac{2^{r\alpha}}{\Gamma_{\Omega}(\alpha)}\frac{\left|\Gamma_{\Omega}\left(\frac{1}{2}\left(\alpha +\frac{n}{r}\right)\right)+i\nu \right|^{2}}{(2\pi)^{n}}
\int_{V}\Psi_{\mathbf{m}}^{(\alpha,\nu)}(t)\overline{\Psi_{\mathbf{n}}^{(\alpha,\nu)}(t)}\,dt. \nonumber 
\end{align}
% Hence, we obtain
% \begin{align}
% \frac{2^{r\alpha}}{\Gamma_{\Omega}(\alpha)}\frac{\left|\Gamma_{\Omega}\left(\frac{1}{2}\left(\alpha +\frac{n}{r}\right)\right)\right|^{2}}{(2\pi)^{n}}
% \int_{V}\Psi_{\mathbf{m}}^{(\alpha)}(t)\overline{\Psi_{\mathbf{n}}^{(\alpha)}(t)}dt
% =(\psi_{\mathbf{m}}^{(\alpha)},\psi_{\mathbf{n}}^{(\alpha)})_{\alpha, \Omega}
% =d_{\mathbf{m}}\frac{(\alpha)_{\mathbf{m}}}{\left(\frac{n}{r}\right)_{\mathbf{m}}}\delta_{\mathbf{m}\mathbf{n}}. \nonumber %=:(\Psi_{\mathbf{m}}^{(\alpha)}, \Psi_{\mathbf{n}}^{(\alpha)})_{\alpha, V} \nonumber
%\end{align}
{\rm{(2)}}\,We remark that the inverse Cayley transform $c^{-1}$ is a holomorphic bijection of $H_{\Omega}$ onto $\mathcal{D}$ and the inverse map of $\mathcal{C}_{\alpha,\nu}^{-1}$ is given by   
\begin{equation}
(\mathcal{C}_{\alpha,\nu}\phi)(t)
:=\Delta\left(\frac{e-c^{-1}(t)}{2}\right)^{\frac{1}{2}\left(\alpha +\frac{n}{r}\right)+i\nu}\phi(c^{-1}(t))
=\Delta(e-it)^{-\frac{1}{2}\left(\alpha +\frac{n}{r}\right)-i\nu}\phi(c^{-1}(t)). 
\end{equation}
Hence, %Since the inverse Cayley transform $c^{-1}$ is a holomorphic bijection of $H_{\Omega}$ onto $\mathcal{D}$ and
since $\{\Psi_{\mathbf{m}}^{(\alpha,\nu)}\}_{\mathbf{m} \in \mathscr{P}}$ form the complete orthogonal basis of $H_{\alpha, \nu}^{2}(V)^{K}$, 
it is sufficient to show the statement for $\{\Psi_{\mathbf{m}}^{(\alpha,\nu)}\}_{\mathbf{m} \in \mathscr{P}}$ and $\{\phi_{\mathbf{m}}^{(\alpha,\nu)}\}_{\mathbf{m} \in \mathscr{P}}$. 

First, by Theorem\,I\hspace{-.1em}X.4.1 in \cite{FK}, 
$$
\|\Psi_{\mathbf{m}}^{(\alpha,\nu)}\|_{H_{\Omega}}^{2}
=\sup_{y \in \Omega}\frac{1}{(2\pi)^{n}}\int_{V}|\Psi_{\mathbf{m}}^{(\alpha,\nu)}(x+iy)|^{2}\,dx
=\frac{\Gamma_{\Omega}(\alpha)}{2^{r\alpha}\left|\Gamma_{\Omega}\left(\frac{1}{2}\left(\alpha +\frac{n}{r}\right)\right)+i\nu \right|^{2}}\|\psi_{\mathbf{m}}^{(\alpha)}\|_{\Omega}^{2}<\infty.
$$
Thus, we need not to consider the condition for $\|\Psi_{\mathbf{m}}^{(\alpha,\nu)}\|_{H_{\Omega}}^{2}$.  
Furthermore, by the definition, we have
$$
(\mathcal{C}_{\alpha,\nu}^{-1}\Psi_{\mathbf{m}}^{(\alpha,\nu)})(\sigma)
=\Delta\left(\frac{e-\sigma}{2}\right)^{-\frac{1}{2}\left(\alpha +\frac{n}{r}\right)-i\nu}\!\!\!\!
\Delta(e-i{c(\sigma)})^{-\frac{1}{2}\left(\alpha +\frac{n}{r}\right)-i\nu}\widetilde{\Psi_{\mathbf{m}}^{(\alpha,\nu)}}(c(\sigma))
=\phi_{\mathbf{m}}^{(\alpha,\nu)}(\sigma),   
$$
and from (\ref{eq:int for Shilov boundary2}) of Lemma\,\ref{thm:int for Shilov boundary}, we have
\begin{align}
\int_{V}|\Psi_{\mathbf{m}}^{(\alpha,\nu)}(t)|^{2}\,dt
&=2^{n}\int_{\Sigma}|\Delta(e-\sigma)^{-\frac{n}{r}}|^{2}|\Psi_{\mathbf{m}}^{(\alpha,\nu)}(c(\sigma))|^{2}\,d\mu(\sigma) \nonumber \\
&=2^{n}\int_{\Sigma}|\Delta(e-\sigma)^{-\frac{n}{r}}|^{2}\left|\Delta\left(\frac{e-\sigma}{2}\right)^{\frac{1}{2}\left(\alpha+\frac{n}{r}\right)+i\nu}\right|^{2}
|(\mathcal{C}_{\alpha,\nu}^{-1}\Psi_{\mathbf{m}}^{(\alpha,\nu)})(\sigma)|^{2}\,d\mu(\sigma) \nonumber \\
&=2^{-r\alpha}\int_{\Sigma}|\phi_{\mathbf{m}}^{(\alpha,\nu)}(\sigma)|^{2}|\Delta(e-\sigma)^{\frac{1}{2}\left(\alpha-\frac{n}{r}\right)+i\nu}|^{2}\,d\mu(\sigma). \nonumber
\end{align}
Therefore, we obtain 
% \begin{align}
% d_{\mathbf{m}}\frac{(\alpha)_{\mathbf{m}}}{\left(\frac{n}{r}\right)_{\mathbf{m}}}\delta_{\mathbf{m}\mathbf{n}}
% &=(\Psi_{\mathbf{m}}^{(\alpha,\nu)},\Psi_{\mathbf{n}}^{(\alpha,\nu)})_{\alpha, \nu, V} \nonumber \\
% &=(\mathcal{C}_{\alpha,\nu}^{-1}(\Psi_{\mathbf{m}}^{(\alpha,\nu)}),\mathcal{C}_{\alpha,\nu}^{-1}(\Psi_{\mathbf{n}}^{(\alpha,\nu)}))_{\alpha, \nu, \Sigma} \nonumber \\
% &=(\phi_{\mathbf{m}}^{(\alpha,\nu)},\phi_{\mathbf{n}}^{(\alpha,\nu)})_{\alpha, \nu, \Sigma}. \nonumber 
% \end{align}
% $$
% \|\phi_{\mathbf{m}}^{(\alpha,\nu)}\|_{\alpha,\nu,\Sigma}^{2}=\|\Psi_{\mathbf{m}}^{(\alpha,\nu)}\|_{\alpha,\nu,V}^{2}=\|\psi_{\mathbf{m}}^{(\alpha)}\|_{\alpha,\Omega}^{2}=d_{\mathbf{m}}\frac{(\alpha)_{\mathbf{m}}}{\left(\frac{n}{r}\right)}.
% $$
$$
(\phi_{\mathbf{m}}^{(\alpha,\nu)},\phi_{\mathbf{n}}^{(\alpha,\nu)})_{\alpha, \nu, \Sigma}
=(\mathcal{C}_{\alpha,\nu}^{-1}(\Psi_{\mathbf{m}}^{(\alpha,\nu)}),\mathcal{C}_{\alpha,\nu}^{-1}(\Psi_{\mathbf{n}}^{(\alpha,\nu)}))_{\alpha, \nu, \Sigma}
=(\Psi_{\mathbf{m}}^{(\alpha,\nu)},\Psi_{\mathbf{n}}^{(\alpha,\nu)})_{\alpha, \nu, V}
=d_{\mathbf{m}}\frac{(\alpha)_{\mathbf{m}}}{\left(\frac{n}{r}\right)_{\mathbf{m}}}\delta_{\mathbf{m}\mathbf{n}}.
$$
% \begin{align}
% \|\Psi_{\mathbf{m}}^{(\alpha,\nu)}\|_{\alpha,\nu,V}^{2}&:=\frac{1}{(2\pi)^{n}}\frac{2^{r\alpha}}{\Gamma_{\Omega}(\alpha)}\left|\Gamma_{\Omega}\left(\frac{1}{2}\left(\alpha +\frac{n}{r}\right)+i\nu \right)\right|^{2}
% \int_{V}|\Psi_{\mathbf{m}}^{(\alpha,\nu)}(t)|^{2}dt \nonumber \\
% &=
% \end{align}
(\ref{eq:MCJ orthogonal}) follows from (\ref{eq:int for Shilov boundary3}) of Lemma\,\ref{thm:int for Shilov boundary} immediately. 
% Furthermore, from the maximum principle, 
% \begin{align}
% \sup_{y \in \Omega}\int_{V}|\Psi_{\mathbf{m}}^{(\alpha,\nu)}(x+iy)|^{2}\,dx
% & \leq 
% \end{align}
\end{proof}

\begin{rmk}
\noindent
{\rm{(1)}}\,Our multivariate orthogonal polynomials are not equal BC-type multivariate Jacobi polynomials. 
Actually, the weight function of the left hand side for (\ref{eq:MCJ orthogonal})
$$
\prod_{j=1}^{r}(1-e^{i\theta_{j}})^{\frac{1}{2}\left(\alpha -\frac{n}{r}\right)+i\nu}(1-e^{-i\theta_{j}})^{\frac{1}{2}\left(\alpha -\frac{n}{r}\right)-i\nu}
\prod_{1\leq p<q\leq r}|e^{i\theta_{p}}-e^{i\theta_{q}}|^{d}
$$
coincides with the circular Jacobi ensemble defined by \cite{BNR}, which is not the BC-type (Weyl group invariant) weight function.

\noindent
{\rm{(2)}}\,When $\alpha =\frac{n}{r}, \nu =0$, (\ref{eq:def of MCJ}) and (\ref{eq:MCJ orthogonal}) degenerate to 
\begin{align}
&\phi_{\mathbf{m}}^{(\frac{n}{r},0)}(e^{i\theta})=
d_{\mathbf{m}}\sum_{\mathbf{k}\subset \mathbf{m}}(-1)^{|\mathbf{k}|}\binom{\mathbf{m}}{\mathbf{k}}\Phi_{\mathbf{k}}(e-e^{i\theta })
=d_{\mathbf{m}}\Phi_{\mathbf{m}}(e^{i\theta}),  \\
&\frac{1}{(2\pi)^{n}}\int_{\Sigma}\phi_{\mathbf{m}}^{(\frac{n}{r},0)}(\sigma)\overline{\phi_{\mathbf{n}}^{(\frac{n}{r},0)}(\sigma)}\,d\mu(\sigma)  \nonumber \\
&=\frac{\widetilde{c_{0}}}{(2\pi)^{n}}\int_{\mathcal{S}^{r}}\phi_{\mathbf{m}}^{(\frac{n}{r},0)}(e^{i\theta})\overline{\phi_{\mathbf{n}}^{(\frac{n}{r},0)}(e^{i\theta})} %\nonumber \\
\prod_{1\leq p<q\leq r}|e^{i\theta_{p}}-e^{i\theta_{q}}|^{d}\,d\theta_{1}\cdots d\theta_{r} %\nonumber \\
%&
=d_{\mathbf{m}}\frac{1}{\Gamma_{\Omega}\left(\frac{n}{r}\right)}\delta_{\mathbf{m}\mathbf{n}}. 
\end{align}
% \begin{equation}
% \phi_{\mathbf{m}}\left(e^{i\theta};\frac{n}{r}, 0 \right)=d_{\mathbf{m}}\sum_{\mathbf{k}\subset \mathbf{m}}(-1)^{|\mathbf{k}|}\binom{\mathbf{m}}{\mathbf{k}}\Phi_{\mathbf{k}}(e-e^{i\theta })
% =d_{\mathbf{m}}\Phi_{\mathbf{m}}(e^{i\theta})
% \end{equation}
% Therefore, $\phi_{\mathbf{m}}(e^{i\theta};\alpha, \nu)$ is regarded as $2$-parameter deformations of the spherical polynomial. 
Therefore, $\phi_{\mathbf{m}}^{(\alpha,\nu)}(e^{i\theta})$ is regarded as a 2-parameter deformation of the spherical polynomial. 

As a generalization of spherical polynomial $\Phi_{\mathbf{m}}^{(d)}$, the Jack polynomial $P_{\mathbf{m}}^{(\frac{2}{d})}$ which is a generalization for multiplicity $d$ is well known (see \cite{M}, Chapter\,V\hspace{-.1em}I). 
This multivariate special orthogonal polynomial system is derived as the simultaneous eigenfunctions of some commuting differential operators.
On the other hand, using the unitary picture, %that is a multivariate analogue of the method of \cite{She}, 
we obtain another extension $\phi_{m}^{(\alpha,\nu)}$, which is different from the Jack polynomial, that is, instead of the multiplicity $d$, we consider  deformations for real 2-parameters $\alpha$ and $\nu$. 

\noindent
{\rm{(3)}}\,We remark that the relations between the MCJ polynomials and the Multivariate Meixner-Pollaczek polynomials. 
The Multivariate Meixner-Pollaczek polynomials $P_{\mathbf{m}}^{(\alpha)}(\mathbf{s};\theta)$ are introduced by Faraut-Wakayama \cite{FW1}. 
\begin{align}
P_{\mathbf{m}}^{(\alpha)}(\mathbf{s};\theta )
&:=e^{i|\mathbf{m}|\theta}d_{\mathbf{m}}\frac{(2\alpha)_{\mathbf{m}}}{\left(\frac{n}{r}\right)_{\mathbf{m}}}
\sum_{\mathbf{k}\subset \mathbf{m}}\binom{\mathbf{m}}{\mathbf{k}}\frac{\gamma_{\mathbf{k}}(-i\mathbf{s}-\alpha)}{(2\alpha)_{\mathbf{k}}}(1-e^{-2i\theta})^{|\mathbf{k}|} \nonumber \\
&=e^{i|\mathbf{m}|\theta}d_{\mathbf{m}}\frac{(2\alpha)_{\mathbf{m}}}{\left(\frac{n}{r}\right)_{\mathbf{m}}}
\sum_{\mathbf{k}\subset \mathbf{m}}d_{\mathbf{k}}\frac{\gamma_{\mathbf{k}}(\mathbf{m}-\rho)\gamma_{\mathbf{k}}(-i\mathbf{s}-\alpha)}{\left(\frac{n}{r}\right)_{\mathbf{k}}(2\alpha)_{\mathbf{k}}}(1-e^{-2i\theta})^{|\mathbf{k}|}. \nonumber
\end{align}
For $\alpha >\frac{n}{r}-1, 0<\theta <2\pi$, this system has the following orthogonality relations. 
% \begin{align}
% \frac{1}{(2\pi)^{r}}\frac{(2\sin{\theta})^{r\alpha}}{\Gamma_{\Omega}(\alpha)}
% \int_{\mathbb{R}^{r}}P_{\mathbf{m}}^{(\alpha)}(\mathbf{s};\theta)\overline{P_{\mathbf{n}}^{(\alpha)}(\mathbf{s};\theta)}
% e^{(2\theta -\pi)|\mathbf{s}|}\left|\Gamma_{\Omega}\left(i\mathbf{s}+\frac{\alpha}{2}+\rho \right)\right|^{2}\,\frac{m(d\mathbf{s})}{|c(i\mathbf{s})|^{2}}
% =d_{\mathbf{m}}\frac{(\alpha)_{\mathbf{m}}}{\left(\frac{n}{r}\right)_{\mathbf{m}}}\delta_{\mathbf{m}\mathbf{n}}. \nonumber
% \end{align}
\begin{align}
\frac{1}{(2\pi)^{r}}\int_{\mathbb{R}^{r}}q_{\mathbf{m}}^{(\alpha,\theta)}(\mathbf{s})\overline{q_{\mathbf{n}}^{(\alpha,\theta)}(\mathbf{s})}
e^{(2\theta -\pi)|\mathbf{s}|}\left|\Gamma_{\Omega}\left(i\mathbf{s}+\frac{\alpha}{2}+\rho \right)\right|^{2}\,\frac{m(d\mathbf{s})}{|c(i\mathbf{s})|^{2}}
=d_{\mathbf{m}}\frac{\Gamma_{\Omega}(\alpha+\mathbf{m})}{\left(\frac{n}{r}\right)_{\mathbf{m}}(2\sin{\theta})^{r\alpha}}\delta_{\mathbf{m}\mathbf{n}}. \nonumber
\end{align}
Here, $q_{\mathbf{m}}^{(\alpha,\theta)}(\mathbf{s})=e^{-i|\mathbf{m}|\theta}P_{\mathbf{m}}^{(\frac{\alpha}{2})}(\mathbf{s};\theta)$ and $m$ is the Lebesgue measure on $\mathbb{R}^{r}$. 
From (\ref{eq:gamma special case}), for any $\theta \in \mathbb{R}$ and $\mathbf{m} \in \mathscr{P}$, we have 
\begin{equation}
\label{eq:CJacobi and MP}
\phi_{\mathbf{m}}^{(\alpha,\nu)}(e^{i\theta}e)=q_{\mathbf{m}}^{\left(\alpha, -\frac{\theta}{2}\right)}\left(\nu -i\left(\frac{n}{2r}+\rho \right)\right).
\end{equation}
\end{rmk}
\subsection{Generating function}

By using these unitary isomorphisms, we present the generating functions of MCJ polynomials from (\ref{eq:generating fnc of Laguerre}).
\begin{thm}
We assume $z=u\sum_{j=1}^{r}a_{j}c_{j} \in \mathcal{D}$ with $u \in U$, $1>a_{1}\geq \ldots\geq a_{r}\geq 0$ and 
%$|\Phi_{\mathbf{k}}(z)|=\Phi_{\mathbf{k}}(a_{1})<3^{-\mathbf{k}}$. 
$a_{1} < \frac{1}{3}$. 

\noindent
{\rm{(1)}}\,For all $t \in V$, %If $|\Phi_{\mathbf{k}}(z)|=\Phi_{\mathbf{k}}(a_{1})<3^{-\mathbf{k}}$, then
\begin{equation}
\label{eq:generating fnc of Psi}
\sum_{\mathbf{m} \in \mathscr{P}}\Psi_{\mathbf{m}}^{(\alpha,\nu)}(t)\Phi_{\mathbf{m}}(z)=\Delta(e-z)^{-\alpha}\int_{K}\Delta((e+z)(e-z)^{-1}-ikt)^{-\frac{1}{2}\left(\alpha +\frac{n}{r}\right)-i\nu}\,dk.
\end{equation}

\noindent
{\rm{(2)}}\,For any $\sigma \in \Sigma$,
% \begin{equation}
% \label{eq:generating fnc of MCJ}
% \sum_{\mathbf{m} \in \mathscr{P}}\phi_{\mathbf{m}}^{(\alpha,\nu)}(\sigma)\Phi_{\mathbf{m}}(z)=\Delta(e-z)^{-\alpha}\int_{K}\Delta((e-z)^{-1}-(z(e-z)^{-1})k\sigma)^{-\frac{1}{2}\left(\alpha +\frac{n}{r}\right)-i\nu}\,dk.
%\end{equation}
\begin{align}
\label{eq:generating fnc of MCJ}
\sum_{\mathbf{m} \in \mathscr{P}}\phi_{\mathbf{m}}^{(\alpha,\nu)}(\sigma)\Phi_{\mathbf{m}}(z)&=\Delta(e-z)^{-\alpha} \nonumber \\
& \quad \cdot \Delta(e-\sigma)^{-\frac{1}{2}\left(\alpha +\frac{n}{r}\right)-i\nu}\int_{K}\Delta(z(e-z)^{-1}+k(e-\sigma)^{-1})^{-\frac{1}{2}\left(\alpha +\frac{n}{r}\right)-i\nu}\,dk.
\end{align}
\end{thm}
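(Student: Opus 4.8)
The plan is to obtain (1) by applying the modified Fourier transform $\mathcal{F}_{\alpha,\nu}^{-1}$ termwise to the Laguerre generating function of Lemma~\ref{thm:generating fnc of Laguerre and MP}, and then to deduce (2) by applying the inverse modified Cayley transform $\mathcal{C}_{\alpha,\nu}^{-1}$ to (1). Since $\psi_{\mathbf{m}}^{(\alpha)}(u)=e^{-\tr{u}}L_{\mathbf{m}}^{(\alpha-\frac{n}{r})}(2u)$, Lemma~\ref{thm:generating fnc of Laguerre and MP} evaluated at the point $2u$ gives
\[\sum_{\mathbf{m}\in\mathscr{P}}\psi_{\mathbf{m}}^{(\alpha)}(u)\Phi_{\mathbf{m}}(z)=\Delta(e-z)^{-\alpha}\int_{K}e^{-\tr{u}-2(ku|z(e-z)^{-1})}\,dk.\]
Recalling from Theorem~\ref{thm:main theorem1} that $\Psi_{\mathbf{m}}^{(\alpha,\nu)}=\mathcal{F}_{\alpha,\nu}^{-1}\psi_{\mathbf{m}}^{(\alpha)}$, I would apply $\mathcal{F}_{\alpha,\nu}^{-1}$ (integration over $\Omega$ against $e^{i(t|u)}\Delta(u)^{\frac{1}{2}(\alpha-\frac{n}{r})+i\nu}$, normalised by $\Gamma_{\Omega}(\frac{1}{2}(\alpha+\frac{n}{r})+i\nu)^{-1}$) to both sides and pass it through the sum. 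After swapping the $\Omega$- and $K$-integrals by Fubini and substituting $u\mapsto k^{-1}u$ in the inner integral (so that $ku\mapsto u$, while $\tr{u}$, $\Delta(u)$ and $du$ are $K$-invariant and $(t|k^{-1}u)=(kt|u)$), the exponent collapses to $-(u\,|\,e+2z(e-z)^{-1}-ikt)$, and the identity $e+2z(e-z)^{-1}=(e+z)(e-z)^{-1}$ turns the inner integral into a Gamma integral. Because $z\in\mathcal{D}$ places $(e+z)(e-z)^{-1}$ in the tube $T_{\Omega}$ and $-ikt$ is purely imaginary, the argument remains in $T_{\Omega}$, so Lemma~\ref{thm:int formula} with $p\equiv 1\in\mathcal{P}_{\mathbf{0}}$ and exponent $\frac{1}{2}(\alpha+\frac{n}{r})+i\nu$ evaluates it as $\Gamma_{\Omega}(\frac{1}{2}(\alpha+\frac{n}{r})+i\nu)\,\Delta((e+z)(e-z)^{-1}-ikt)^{-\frac{1}{2}(\alpha+\frac{n}{r})-i\nu}$. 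The Gamma factor cancels the normalisation of $\mathcal{F}_{\alpha,\nu}^{-1}$, which yields (1).

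The main obstacle is justifying the interchange of $\sum_{\mathbf{m}}$ with the $\Omega$-integral, and this is precisely where the hypothesis $a_{1}<\tfrac{1}{3}$ is consumed. The absolute-convergence estimate established inside the proof of Lemma~\ref{thm:generating fnc of Laguerre and MP} bounds $\sum_{\mathbf{m}}|L_{\mathbf{m}}^{(\alpha-\frac{n}{r})}(2u)\Phi_{\mathbf{m}}(z)|$ by a constant times $e^{\frac{2a_{1}}{1-a_{1}}\tr{u}}$ (the factor $2$ coming from the argument $2u$). Multiplying by the $e^{-\tr{u}}$ built into $\psi_{\mathbf{m}}^{(\alpha)}$ leaves $e^{(\frac{2a_{1}}{1-a_{1}}-1)\tr{u}}$, which is integrable over $\Omega$ against the weight $\Delta(u)^{\frac{1}{2}(\alpha-\frac{n}{r})}$ exactly when $\frac{2a_{1}}{1-a_{1}}<1$, i.e. $a_{1}<\tfrac{1}{3}$. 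This domination supplies the Tonelli/Fubini justification needed to move the sum through the transform and to swap the two integrals.

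For (2) I would apply $\mathcal{C}_{\alpha,\nu}^{-1}$ in the variable $t$ to the identity (1), using $\mathcal{C}_{\alpha,\nu}^{-1}\Psi_{\mathbf{m}}^{(\alpha,\nu)}=\phi_{\mathbf{m}}^{(\alpha,\nu)}$ from Theorem~\ref{thm:main theorem1}. Since $\Phi_{\mathbf{m}}(z)$ does not depend on $t$ and $(\mathcal{C}_{\alpha,\nu}^{-1}\Psi)(\sigma)=\Delta(\tfrac{e-\sigma}{2})^{-\frac{1}{2}(\alpha+\frac{n}{r})-i\nu}\Psi(c(\sigma))$ with $c(\sigma)\in V$ for $\sigma\in\Sigma$, the left-hand side becomes $\sum_{\mathbf{m}}\phi_{\mathbf{m}}^{(\alpha,\nu)}(\sigma)\Phi_{\mathbf{m}}(z)$ (convergent because (1) is), and the right-hand side is (1) at $t=c(\sigma)$ multiplied by the Cayley prefactor. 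What remains is a purely algebraic simplification: using $ke=e$ and $c(\sigma)=-ie+2i(e-\sigma)^{-1}$ gives $ikc(\sigma)=e-2k(e-\sigma)^{-1}$, hence $(e+z)(e-z)^{-1}-ikc(\sigma)=2\bigl(z(e-z)^{-1}+k(e-\sigma)^{-1}\bigr)$; extracting the scalar $2$ from $\Delta$ via $\Delta(2x)=2^{r}\Delta(x)$ produces a power of $2^{r}$ that cancels exactly against the one from $\Delta(\tfrac{e-\sigma}{2})^{-\frac{1}{2}(\alpha+\frac{n}{r})-i\nu}$, while the surviving factor $\Delta(e-\sigma)^{-\frac{1}{2}(\alpha+\frac{n}{r})-i\nu}$ pulls out of the $K$-integral. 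This reproduces (2). Beyond this algebra, the only point requiring care is the consistency of the branches of the generalised power functions $\Delta(\,\cdot\,)^{\beta}$, which are pinned down by their normalisation at the base points $w=0$ and $t=0$.
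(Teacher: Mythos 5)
Your proposal is correct and follows essentially the same route as the paper: termwise application of $\mathcal{F}_{\alpha,\nu}^{-1}$ to the Laguerre generating function, with the interchange justified by the dominating bound $e^{-\frac{1-3a_{1}}{1-a_{1}}\mathrm{tr}\,u}$ from the proof of Lemma~3.12 (which is exactly where $a_{1}<\tfrac13$ enters), evaluation of the inner integral by the Gamma-integral formula of Lemma~2.9, and then the Cayley transform together with the identity $(e+z)(e-z)^{-1}-ikc(\sigma)=2\bigl(z(e-z)^{-1}+k(e-\sigma)^{-1}\bigr)$ and the cancellation of the powers of $2$ for part~(2). All the key steps and the role of the hypothesis match the paper's argument.
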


\begin{proof}
{\rm{(1)}}\,From a similar argument in the proof of Lemma\,\ref{thm:generating fnc of Laguerre and MP}, 
\begin{align}
\mathcal{F}_{\alpha,\nu}^{-1}\left(\sum_{\mathbf{m} \in \mathscr{P}}|\psi_{\mathbf{m}}^{(\alpha)}(u)\Phi_{\mathbf{m}}(z)|\right)(t) 
%&=\mathcal{F}_{\alpha,\nu}^{-1}\left(\sum_{\mathbf{m} \in \mathscr{P}}|e^{-\tr{u}}L_{\mathbf{m}}^{\left(\alpha -\frac{n}{r}\right)}(2u)\Phi_{\mathbf{m}}(z)|\right)(t) \nonumber \\
&\leq (1-a_{1})^{-r|\alpha|-dr(r-1)}\mathcal{F}_{\alpha,\nu}^{-1}(e^{-\frac{1-3a_{1}}{1-a_{1}}\tr{u}})(t) \nonumber \\
&=(1-a_{1})^{-r|\alpha|-dr(r-1)}\Delta\left(\frac{1-3a_{1}}{1-a_{1}}e-it\right)^{-\frac{1}{2}\left(\alpha+\frac{n}{r}\right)-i\nu}<\infty. \nonumber 
\end{align}
% By $1-3a_{1}>0$, we can apply Lemma\,\ref{thm:int expression of image of Harish-Chandra} to $\mathcal{M}_{\alpha,\theta}(e^{-\frac{1-3a_{1}}{1-a_{1}}\tr{u}})(\mathbf{s})$.
% Thus, 
% \begin{align}
% \mathcal{M}_{\alpha,\theta}\left(\sum_{\mathbf{m} \in \mathscr{P}}|\psi_{\mathbf{m}}^{(\alpha)}(u)\Phi_{\mathbf{m}}(z)|\right)(\mathbf{s})
% \leq (1-a_{1})^{-r|\alpha|-dr(r-1)}
% \left(1-\frac{a_{1}}{1-a_{1}}(1-e^{-2i\theta})\right)^{-i|\mathbf{s}|-\frac{\alpha}{2}r}.\nonumber 
% \end{align}
Hence, the exchange of integration and summation is justified and we obtain
\begin{align}
\sum_{\mathbf{m} \in \mathscr{P}}\Psi_{\mathbf{m}}^{(\alpha,\nu)}(t)\Phi_{\mathbf{m}}(z)
&=\sum_{\mathbf{m} \in \mathscr{P}}\mathcal{F}_{\alpha,\nu}^{-1}(\psi_{\mathbf{m}}^{(\alpha)})(t)\Phi_{\mathbf{m}}(z) \nonumber \\
&=\mathcal{F}_{\alpha,\nu}^{-1}\left(\sum_{\mathbf{m} \in \mathscr{P}}\psi_{\mathbf{m}}^{(\alpha)}(u)\Phi_{\mathbf{m}}(z)\right)(t) \nonumber \\
&=\Delta(e-z)^{-\alpha}\mathcal{F}_{\alpha,\nu}^{-1}\left(\int_{K}e^{-(ku|(e+z)(e-z)^{-1})}\,dk\right)(t). \nonumber
\end{align}
%To justify the second equality, we consider 
Moreover, by Lemma\,\ref{thm:int formula}, 
\begin{align}
\mathcal{F}_{\alpha,\nu}^{-1}\left(\int_{K}e^{-(ku|(e+z)(e-z)^{-1})}\,dk\right)(t)
&=\frac{1}{\Gamma_{\Omega}\left(\frac{1}{2}\left(\alpha+\frac{n}{r}\right)+i\nu\right)} \nonumber \\
{} & \quad \cdot \int_{\Omega}e^{i(t|u)}\Delta(u)^{\frac{1}{2}\left(\alpha -\frac{n}{r}\right)+i\nu}\int_{K}e^{-(ku|(e+z)(e-z)^{-1})}\,dkdu \nonumber \\
&=\frac{1}{\Gamma_{\Omega}\left(\frac{1}{2}\left(\alpha+\frac{n}{r}\right)+i\nu\right)} \nonumber \\
{} & \quad \cdot \int_{K}\int_{\Omega}e^{-(u|k(e+z)(e-z)^{-1}-it)}\Delta(u)^{\frac{1}{2}\left(\alpha -\frac{n}{r}\right)+i\nu}\,dudk \nonumber \\
&=\int_{K}\Delta(k(e+z)(e-z)^{-1}-it)^{-\frac{1}{2}\left(\alpha +\frac{n}{r}\right)-i\nu}\,dk \nonumber \\
&=\int_{K}\Delta((e+z)(e-z)^{-1}-ikt)^{-\frac{1}{2}\left(\alpha +\frac{n}{r}\right)-i\nu}\,dk. \nonumber 
\end{align}
% \begin{align}
% \int_{\Omega}e^{i(t|u)}\Delta(u)^{\frac{1}{2}\left(\alpha -\frac{n}{r}\right)+i\nu}\int_{K}e^{-(ku|(e+z)(e-z)^{-1})}dkdu
% &=\int_{K}\int_{\Omega}e^{-(u|k(e+z)(e-z)^{-1}-it)}\Delta(u)^{\frac{1}{2}\left(\alpha -\frac{n}{r}\right)+i\nu}dudk \nonumber \\
% &=
% \end{align}

\noindent
{\rm{(2)}}\,Applying the modified Cayley transform $\mathcal{C}_{\alpha,\nu}^{-1}$ to (\ref{eq:generating fnc of Psi}), we obtain 
\begin{align}
\sum_{\mathbf{m} \in \mathscr{P}}\phi_{\mathbf{m}}^{(\alpha,\nu)}(\sigma)\Phi_{\mathbf{m}}(z)
&=\Delta(e-z)^{-\alpha}\mathcal{C}_{\alpha,\nu}^{-1}\left(\int_{K}\Delta((e+z)(e-z)^{-1}-ikt)^{-\frac{1}{2}\left(\alpha +\frac{n}{r}\right)-i\nu}\,dk\right)(\sigma) \nonumber \\
&=\Delta(e-z)^{-\alpha}\Delta\left(\frac{e-\sigma}{2}\right)^{-\frac{1}{2}\left(\alpha +\frac{n}{r}\right)-i\nu} \nonumber \\
{} & \quad \cdot \int_{K}\Delta((e+z)(e-z)^{-1}-ikc(\sigma))^{-\frac{1}{2}\left(\alpha +\frac{n}{r}\right)-i\nu}\,dk. \nonumber
\end{align}
Since 
$$
(e+z)(e-z)^{-1}-ikc(\sigma)=2z(e-z)^{-1}+2k(e-\sigma)^{-1}, 
$$
we have the conclusion. 
% \begin{align}
% & \Delta(e-\sigma)^{-\frac{1}{2}\left(\alpha +\frac{n}{r}\right)-i\nu}\int_{K}\Delta(z(e-z)^{-1}+k(e-\sigma)^{-1})^{-\frac{1}{2}\left(\alpha +\frac{n}{r}\right)-i\nu}\,dk \nonumber \\
% &=\int_{K}\Delta(e+(k(z(e-z)^{-1}))(e-\sigma))^{-\frac{1}{2}\left(\alpha +\frac{n}{r}\right)-i\nu}\,dk \nonumber \\
% &=\int_{K}\Delta(e+(z(e-z)^{-1})(e-k\sigma))^{-\frac{1}{2}\left(\alpha +\frac{n}{r}\right)-i\nu}\,dk \nonumber \\
% &=\int_{K}\Delta((e-z)^{-1}-(z(e-z)^{-1})k\sigma)^{-\frac{1}{2}\left(\alpha +\frac{n}{r}\right)-i\nu}\,dk. \nonumber
% \end{align}

\end{proof}
\subsection{Differential equation for $\Psi_{\mathbf{m}}^{(\alpha, \nu)}$}
%We have not been able to obtain the explicit expression of the differential equation for the MCJ polynomial. 
%However, thinking some pseudo-differential operator which is defined by 
Considering some pseudo-differential operator that is defined by 
\begin{equation}
\tr{(\nabla_{t}^{-1})}e^{(t|u)}=\tr{(u^{-1})}e^{(t|u)}\,\,\,\,\,\,(\text{any $t \in V, u \in \Omega$}), 
\end{equation}
we obtain the explicit (pseudo-) differential equations for $\Psi_{\mathbf{m}}^{(\alpha, \nu)}$ as follows. 
\begin{thm}
\label{thm:Differential equation for Psi}
The operator $D_{\alpha,\nu}^{(2)}$ on $V$ is defined by the relation $D_{\alpha,\nu}^{(2)}\mathcal{F}_{\alpha,\nu}^{-1}=\mathcal{F}_{\alpha,\nu}^{-1}D_{\alpha}^{(1)}$. 
Then, we obtain 
% \begin{align}
% \label{eq:Differential equation for Psi}
% D_{\alpha,\nu}^{(5)}&:=-i\tr((e+t^{2})\partial_{t})+i(n-2ir\nu)\tr{t}-r\alpha \nonumber \\
% {} & \quad -i\left(\frac{1}{2}\left(\alpha -\frac{n}{r}\right)+i\nu \right)\left(\frac{1}{2}\left(\alpha +\frac{n}{r}\right)-i\nu-1 \right)\tr{(\partial_{t}^{-1})}.
% \end{align}
\begin{align}
\label{eq:Differential equation for Psi}
D_{\alpha,\nu}^{(2)}=\tr\left(-i(e+t^{2})\nabla_{t}+\left(2\nu-i\frac{n}{r}\right)t-\alpha{e}+i\left(\frac{1}{4}\left(\alpha -\frac{n}{r}\right)^{2}+\nu^{2} \right)\nabla_{t}^{-1}\right), 
\end{align}
% \begin{align}
% \label{eq:Differential equation for Psi}
% D_{\alpha,\nu}^{(5)}&:=\tr\left(-i(e+t^{2})\nabla_{t}+\left(2\nu-i\frac{n}{r}\right)t-\alpha{e} \right. \nonumber \\
% {} & \quad \left. -i\left(\frac{1}{4}\left(\alpha -\frac{n}{r}\right)^{2}+\nu^{2} \right)\nabla_{t}^{-1}\right).
% \end{align}
and  
\begin{equation}
\label{eq:Differential equation for Psi 2}
D_{\alpha,\nu}^{(2)}\Psi_{\mathbf{m}}^{(\alpha, \nu)}(t)=2|\mathbf{m}|\Psi_{\mathbf{m}}^{(\alpha, \nu)}(t).
\end{equation}
\end{thm}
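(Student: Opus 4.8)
The eigenvalue equation (\ref{eq:Differential equation for Psi 2}) will come for free once the operator is in hand: since $D_{\alpha,\nu}^{(2)}$ is \emph{defined} by $D_{\alpha,\nu}^{(2)}\mathcal{F}_{\alpha,\nu}^{-1}=\mathcal{F}_{\alpha,\nu}^{-1}D_\alpha^{(1)}$, and since $\mathcal{F}_{\alpha,\nu}^{-1}\psi_{\mathbf{m}}^{(\alpha)}=\Psi_{\mathbf{m}}^{(\alpha,\nu)}$ by Theorem~\ref{thm:main theorem1} while $D_\alpha^{(1)}\psi_{\mathbf{m}}^{(\alpha)}=2|\mathbf{m}|\psi_{\mathbf{m}}^{(\alpha)}$ by Lemma~\ref{thm:differential Lemma for Laguerre}, one immediately gets $D_{\alpha,\nu}^{(2)}\Psi_{\mathbf{m}}^{(\alpha,\nu)}=\mathcal{F}_{\alpha,\nu}^{-1}(2|\mathbf{m}|\psi_{\mathbf{m}}^{(\alpha)})=2|\mathbf{m}|\Psi_{\mathbf{m}}^{(\alpha,\nu)}$. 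Thus the whole task is to produce the explicit formula (\ref{eq:Differential equation for Psi}), i.e.\ to compute the conjugate $D_{\alpha,\nu}^{(2)}=\mathcal{F}_{\alpha,\nu}^{-1}D_\alpha^{(1)}(\mathcal{F}_{\alpha,\nu}^{-1})^{-1}$.

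The plan is to factor $\mathcal{F}_{\alpha,\nu}^{-1}$ as a constant times $F^{-1}\circ M$, where $M$ is multiplication by $\Delta(u)^{\beta}$ with $\beta=\tfrac{1}{2}(\alpha-\tfrac{n}{r})+i\nu$ and $F^{-1}$ is the inverse Fourier transform from the Plancherel theorem (Lemma~\ref{thm:Plancherel,[FK] Thm 9.4.1}), and to carry out the conjugation in two stages. In the first (gauge) stage I would conjugate by $M$, using $\nabla(\Delta(u)^{\beta})=\beta\Delta(u)^{\beta}u^{-1}$ from Lemma~\ref{thm:derivation Lemma}(4), which gives $M\nabla_u M^{-1}=\nabla_u-\beta u^{-1}$. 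The first-order and multiplication terms transform trivially; for the second-order term $\tr(u\nabla_u^2)$ the substitution $\nabla_u\mapsto\nabla_u-\beta u^{-1}$ applied twice produces cross terms that I would collapse using the associativity of the trace form $\tr((xy)z)=\tr(x(yz))$ (so $\tr(u(u^{-1}v))=\tr(v)$ and $\tr(u(u^{-1})^2)=\tr(u^{-1})$) together with $\tr(u\,\nabla u^{-1})=-\tfrac{n}{r}\tr(u^{-1})$ from Lemma~\ref{thm:derivation Lemma}(3). The expected outcome is $MD_\alpha^{(1)}M^{-1}=-\tr(u\nabla_u^2)+(2i\nu-\tfrac{n}{r})\tr(\nabla_u)+(\tfrac{1}{4}(\alpha-\tfrac{n}{r})^2+\nu^2)\tr(u^{-1})+\tr(u)-\alpha r$, the coefficient of $\tr(u^{-1})$ coming from the simplification $\beta(\alpha-\tfrac{n}{r}-\beta)=\tfrac{1}{4}(\alpha-\tfrac{n}{r})^2+\nu^2$.

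In the second (Fourier) stage I would conjugate this operator by $F^{-1}$, reading off the symbol dictionary on the kernel $e^{i(t|u)}$: multiplication by $u$ becomes $-i\nabla_t$, the operator $\tr(\nabla_u)$ becomes multiplication by $-i\tr(t)$ after one integration by parts, and $\tr(u^{-1})$ becomes $i\tr(\nabla_t^{-1})$ through the pseudo-differential convention $\tr(\nabla_t^{-1})e^{(t|u)}=\tr(u^{-1})e^{(t|u)}$. For the second-order term I would integrate by parts twice and use $\sum_l e_l^2=\nabla u=\tfrac{n}{r}e$ from Lemma~\ref{thm:derivation Lemma}(2); this should give $\tr(u\nabla_u^2)\mapsto i\tr(t^2\nabla_t)+2i\tfrac{n}{r}\tr(t)$. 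Adding the images, combining the three contributions to $\tr(t)$ into the single coefficient $2\nu-i\tfrac{n}{r}$, and rewriting $-i\tr(\nabla_t)-i\tr(t^2\nabla_t)=-i\tr((e+t^2)\nabla_t)$, should reproduce (\ref{eq:Differential equation for Psi}) exactly.

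The main obstacle is the second-order term $\tr(-u\nabla_u^2)$, which is where both stages become delicate: in the gauge stage the non-associativity of the Jordan product must be absorbed entirely through the trace identity, and in the Fourier stage the first-order remainder $2i\tfrac{n}{r}\tr(t)$ created when the two derivatives fall on the kernel is precisely what corrects the naive $\tr(t)$-coefficient $2\nu+i\tfrac{n}{r}$ to the required $2\nu-i\tfrac{n}{r}$, so the bookkeeping across the two stages has to reconcile cleanly. A secondary technical point is that every integration by parts discards a boundary integral over $\partial\Omega$; these vanish thanks to the factor $\Delta(u)^{\beta}$ once $\Re\alpha$ is large enough, and the identity then extends to all admissible $\alpha,\nu$ by analytic continuation.
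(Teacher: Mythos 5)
Your proposal is correct, and it reaches the stated operator by a route that is computationally equivalent to the paper's but organized differently. The paper writes $(\mathcal{F}_{\alpha,\nu}^{-1}\psi)(t)$ as an $L^{2}_{\alpha}(\Omega)$-inner product against the kernel $e^{i(t|u)}\Delta(u)^{-\frac{1}{2}(\alpha-\frac{n}{r})+i\nu}$, invokes the self-adjointness of $D_{\alpha}^{(1)}$ with respect to $\Delta(u)^{\alpha-\frac{n}{r}}\,du$ (Lemma 3.13 of [FW]) to move the operator onto that kernel, and then computes $D_{\alpha}^{(1)}$ of the kernel in a single pass using Lemma\,\ref{thm:derivation Lemma}, reading off the multiplier $\tr\bigl((e+t^{2})u+(2\nu-i\tfrac{n}{r})t-\alpha e+(\tfrac14(\alpha-\tfrac{n}{r})^{2}+\nu^{2})u^{-1}\bigr)$ as a pseudo-differential operator in $t$. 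You instead factor $\mathcal{F}_{\alpha,\nu}^{-1}=c\,F^{-1}\circ M$ and conjugate in two stages, replacing the self-adjointness citation by explicit integration by parts in the Fourier stage. The intermediate objects genuinely differ --- the paper's kernel carries the exponent $-\tfrac12(\alpha-\tfrac{n}{r})+i\nu$ while your gauge factor is $\Delta(u)^{-\beta}$ with $\beta=\tfrac12(\alpha-\tfrac{n}{r})+i\nu$, and your Fourier stage picks up adjoint signs the paper never sees --- yet the final coefficients agree: I checked that $2\beta-\alpha=2i\nu-\tfrac{n}{r}$, that $\beta(\alpha-\tfrac{n}{r}-\beta)=|\beta|^{2}=\tfrac14(\alpha-\tfrac{n}{r})^{2}+\nu^{2}$, and that the remainder $2i\tfrac{n}{r}\tr(t)$ from the double integration by parts (via $\sum_{l}e_{l}^{2}=\tfrac{n}{r}e$) corrects $2\nu+i\tfrac{n}{r}$ to the required $2\nu-i\tfrac{n}{r}$, exactly as you predict. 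What the paper's route buys is that all boundary issues are absorbed into the cited self-adjointness lemma; what your route buys is self-containedness, at the price of having to justify the vanishing of boundary terms on $\partial\Omega$ and at infinity yourself --- you correctly flag this and the fix (take $\Re\alpha$ large, then continue analytically in $\alpha$, which is legitimate since both sides of the identity on the basis $\psi_{\mathbf{m}}^{(\alpha)}$ are explicit analytic functions of $\alpha$ and $\nu$). Your treatment of \eqref{eq:Differential equation for Psi 2} is identical to the paper's: it is immediate from the defining relation together with Theorem\,\ref{thm:main theorem1} and Lemma\,\ref{thm:differential Lemma for Laguerre}.
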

\begin{proof}
From Theorem\,\ref{thm:main theorem1}, to prove (\ref{eq:Differential equation for Psi}), it suffices to show the relation for complete orthogonal basis $\psi_{\mathbf{m}}^{(\alpha)}$ 
for $L^{2}_{\alpha}(\Omega)^{K}$
$$
(\mathcal{F}_{\alpha,\nu}^{-1}D_{\alpha}^{(1)}\psi_{\mathbf{m}}^{(\alpha)})(t)=\widetilde{D_{\alpha,\nu}^{(2)}}((\mathcal{F}_{\alpha,\nu}^{-1}\psi_{\mathbf{m}}^{(\alpha)})(t)),
$$
where $D_{\alpha,\nu}^{(2)}$ is the operator on the right hand side of (\ref{eq:Differential equation for Psi}).

By the very definition of the modified Fourier transform $\mathcal{F}_{\alpha,\nu}^{-1}$ and the inner product of $L^{2}_{\alpha}(\Omega)$, we can write 
\begin{equation}
(\mathcal{F}_{\alpha,\nu}^{-1}\psi)(t)=(e^{i(t|u)}\Delta(u)^{-\frac{1}{2}\left(\alpha -\frac{n}{r}\right)+i\nu}|\overline{\psi})_{L^{2}_{\alpha}(\Omega)},
\end{equation}
%Let $D_{\alpha}^{(3)}^{*}$ be the adjoint operator of $D_{\alpha}^{(3)}$ with respect to the measure $\Delta(u)^{}$
and from Lemma\,3.13 in \cite{FW1}, $D_{\alpha}^{(1)}=\overline{D_{\alpha}^{(1)}}$ is a self-adjoint operator with respect to the measure $\Delta(u)^{\alpha -\frac{n}{r}}\,du$. 
%Since $D_{\alpha}^{(3)}$ is a self-adjoint operator with respect to the measure $\Delta(u)^{\alpha -\frac{n}{r}}$ 
Hence, 
\begin{align}
(\mathcal{F}_{\alpha,\nu}^{-1}D_{\alpha}^{(1)}\psi_{\mathbf{m}}^{(\alpha)})(t)
&=(e^{i(t|u)}\Delta(u)^{-\frac{1}{2}\left(\alpha -\frac{n}{r}\right)+i\nu}|\overline{D_{\alpha}^{(1)}\psi_{\mathbf{m}}^{(\alpha)}})_{L^{2}_{\alpha}(\Omega)} \nonumber \\
&=(D_{\alpha}^{(1)}(e^{i(t|u)}\Delta(u)^{-\frac{1}{2}\left(\alpha -\frac{n}{r}\right)+i\nu})|\overline{\psi_{\mathbf{m}}^{(\alpha)}})_{L^{2}_{\alpha}(\Omega)}. \nonumber
\end{align}
Furthermore, based on Lemma\,\ref{thm:derivation Lemma}, let us perform  
\begin{align}
\tr(u\nabla_{u}^{2})(e^{i(t|u)}\Delta(u)^{-\frac{1}{2}\left(\alpha -\frac{n}{r}\right)+i\nu})
&=\tr(u(\nabla_{u}^{2}e^{i(t|u)}))\Delta(u)^{-\frac{1}{2}\left(\alpha -\frac{n}{r}\right)+i\nu} \nonumber \\
{} & \quad +2\tr(u(\nabla_{u}e^{i(t|u)})(\nabla_{u}\Delta(u)^{-\frac{1}{2}\left(\alpha -\frac{n}{r}\right)+i\nu})) \nonumber \\
{} & \quad +e^{i(t|u)}\tr(u\nabla_{u}^{2}\Delta(u)^{-\frac{1}{2}\left(\alpha -\frac{n}{r}\right)+i\nu}) \nonumber \\
&=e^{i(t|u)}\Delta(u)^{-\frac{1}{2}\left(\alpha -\frac{n}{r}\right)+i\nu}\left\{\tr(-ut^{2})-i\left(\!\left(\alpha -\frac{n}{r}\right)\!-2i\nu\!\right)\!\tr(t) \right. \nonumber \\
{} & \quad \left.+ \left(\frac{1}{2}\left(\alpha -\frac{n}{r}\right)-i\nu \right)\left(\frac{1}{2}\left(\alpha +\frac{n}{r}\right)-i\nu \right)\tr(u^{-1})\right\}, \nonumber 
\end{align}
and 
\begin{align}
\tr(\alpha \nabla_{u})(e^{i(t|u)}\Delta(u)^{-\frac{1}{2}\left(\alpha -\frac{n}{r}\right)+i\nu})
&=\alpha\tr(\nabla_{u}e^{i(t|u)})\Delta(u)^{-\frac{1}{2}\left(\alpha -\frac{n}{r}\right)+i\nu} \nonumber \\
{} & \quad +\alpha e^{i(t|u)}\tr(\nabla_{u}\Delta(u)^{-\frac{1}{2}\left(\alpha -\frac{n}{r}\right)+i\nu}) \nonumber \\
&=\tr\!\left(\!i\alpha{t}-\alpha\!\left(\frac{1}{2}\left(\alpha -\frac{n}{r}\right)-i\nu \right)u^{-1}\!\right)\!e^{i(t|u)}\Delta(u)^{-\frac{1}{2}\left(\alpha -\frac{n}{r}\right)+i\nu}. \nonumber
\end{align}
Here, we remark that for any $p \in \mathbb{Z}_{\geq 0}$ $\tr((\nabla_{u}^{p}e^{i(t|u)}))=\tr((it)^{p})e^{i(t|u)}$ and 
\begin{align}
\tr(u\nabla_{u}^{2}\Delta(u)^{-\frac{1}{2}\left(\alpha -\frac{n}{r}\right)+i\nu})
%&=\left(-\frac{1}{2}\left(\alpha -\frac{n}{r}\right)+i\nu \right)\tr(u(\nabla_{u}u^{-1}\Delta(u)^{-\frac{1}{2}\left(\alpha -\frac{n}{r}\right)+i\nu})) \nonumber \\
&=\left(-\frac{1}{2}\left(\alpha -\frac{n}{r}\right)+i\nu \right) \nonumber \\
{} & \quad \left\{\tr(u(\nabla_{u}u^{-1}))\Delta(u)^{-\frac{1}{2}\left(\alpha -\frac{n}{r}\right)+i\nu}
+\tr(\nabla_{u}\Delta(u)^{-\frac{1}{2}\left(\alpha -\frac{n}{r}\right)+i\nu})  \right\} \nonumber \\
&=\left(\frac{1}{2}\left(\alpha -\frac{n}{r}\right)-i\nu \right)\left(\frac{1}{2}\left(\alpha +\frac{n}{r}\right)-i\nu \right)
\tr(u^{-1})\Delta(u)^{-\frac{1}{2}\left(\alpha -\frac{n}{r}\right)+i\nu} \nonumber %\\
% \tr((\nabla_{u}e^{i(t|u)})(u\nabla_{u}\Delta(u)^{-\frac{1}{2}\left(\alpha -\frac{n}{r}\right)+i\nu}))
% &=\left(-\frac{1}{2}\left(\alpha -\frac{n}{r}\right)+i\nu\right)\tr(it)e^{i(t|u)}\Delta(u)^{-\frac{1}{2}\left(\alpha -\frac{n}{r}\right)+i\nu}, \nonumber \\
\end{align}
and 
\begin{align}
\tr((\nabla_{u}e^{i(t|u)})(u\nabla_{u}\Delta(u)^{-\frac{1}{2}\left(\alpha -\frac{n}{r}\right)+i\nu}))
&=\left(-\frac{1}{2}\left(\alpha -\frac{n}{r}\right)+i\nu\right)\tr(it)e^{i(t|u)}\Delta(u)^{-\frac{1}{2}\left(\alpha -\frac{n}{r}\right)+i\nu}. \nonumber
\end{align}
Hence, 
\begin{align}
D_{\alpha}^{(1)}(e^{i(t|u)}\Delta(u)^{-\frac{1}{2}\left(\alpha -\frac{n}{r}\right)+i\nu})
&=\tr{(-u\nabla_{u}^{2}-\alpha\nabla_{u}+u-\alpha{e})}(e^{i(t|u)}\Delta(u)^{-\frac{1}{2}\left(\alpha -\frac{n}{r}\right)+i\nu}) \nonumber \\
&=\tr\!{\left(\!(e+t^{2})u+\left(2\nu -i\frac{n}{r}\right)t-\alpha e+\left(\frac{1}{4}\left(\alpha -\frac{n}{r}\right)^{2}+\nu^{2} \right)u^{-1}\!\right)} \nonumber \\
{} & \quad \cdot e^{i(t|u)}\Delta(u)^{-\frac{1}{2}\left(\alpha -\frac{n}{r}\right)+i\nu} \nonumber \\
%&=D_{\alpha,\nu}^{(5)}e^{i(t|u)}\Delta(u)^{-\frac{1}{2}\left(\alpha -\frac{n}{r}\right)+i\nu}. \nonumber
% &=\tr\left(-i(e+t^{2})\nabla_{t}+\left(2\nu-i\frac{n}{r}\right)t-\alpha{e}+i\left(\frac{1}{4}\left(\alpha -\frac{n}{r}\right)^{2}+\nu^{2} \right)\nabla_{t}^{-1}\right) \nonumber \\
% {} & \quad \cdot e^{i(t|u)}\Delta(u)^{-\frac{1}{2}\left(\alpha -\frac{n}{r}\right)+i\nu} \nonumber \\
&=\widetilde{D_{\alpha,\nu}^{(2)}}e^{i(t|u)}\Delta(u)^{-\frac{1}{2}\left(\alpha -\frac{n}{r}\right)+i\nu}. \nonumber 
\end{align}
Therefore, 
\begin{align}
(\mathcal{F}_{\alpha,\nu}^{-1}D_{\alpha}^{(1)}\psi_{\mathbf{m}}^{(\alpha)})(t)
&=(\widetilde{D_{\alpha,\nu}^{(2)}}e^{i(t|u)}\Delta(u)^{-\frac{1}{2}\left(\alpha -\frac{n}{r}\right)+i\nu}|\overline{\psi_{\mathbf{m}}^{(\alpha)}})_{L^{2}_{\alpha}(\Omega)} \nonumber \\
&=\widetilde{D_{\alpha,\nu}^{(2)}}(e^{i(t|u)}\Delta(u)^{-\frac{1}{2}\left(\alpha -\frac{n}{r}\right)+i\nu}|\overline{\psi_{\mathbf{m}}^{(\alpha)}})_{L^{2}_{\alpha}(\Omega)} \nonumber \\
&=\widetilde{D_{\alpha,\nu}^{(2)}}((\mathcal{F}_{\alpha,\nu}^{-1}\psi)(t)). \nonumber
\end{align}
The second equality is justified by $e^{i(t|u)}\Delta(u)^{\frac{1}{2}\left(\alpha -\frac{n}{r}\right)+i\nu}\psi_{\mathbf{m}}^{(\alpha)} \in L^{1}(\Omega)$. 
Finally, since $\mathcal{F}_{\alpha,\nu}^{-1}$ is an isomorphism from the space $L^{2}_{\alpha}(\Omega)^{K}$ onto $H_{\alpha, \nu}^{2}(V)^{K}$, 
we obtain $D_{\alpha,\nu}^{(2)}=\widetilde{D_{\alpha,\nu}^{(2)}}$.

On the other hand, for (\ref{eq:Differential equation for Psi 2}), from the definition of $D_{\alpha,\nu}^{(2)}$ and (\ref{eq:differential equation for Laguerre}), we have
$$
D_{\alpha,\nu}^{(2)}\Psi_{\mathbf{m}}^{(\alpha, \nu)}(t)
=D_{\alpha,\nu}^{(2)}\mathcal{F}_{\alpha,\nu}^{-1}(\psi_{\mathbf{m}}^{(\alpha)})(t)
=\mathcal{F}_{\alpha,\nu}^{-1}(D_{\alpha}^{(1)}\psi_{\mathbf{m}}^{(\alpha)})(t)
=2|\mathbf{m}|\mathcal{F}_{\alpha,\nu}^{-1}(\psi_{\mathbf{m}}^{(\alpha)})(t)
=2|\mathbf{m}|\Psi_{\mathbf{m}}^{(\alpha, \nu)}(t). 
$$
\end{proof}

We can also define $D_{\alpha,\nu}^{(3)}$ by the relation $D_{\alpha,\nu}^{(3)}\mathcal{C}_{\alpha,\nu}^{-1}=\mathcal{C}_{\alpha,\nu}^{-1}D_{\alpha,\nu}^{(2)}$. 
However, since there are difficulties in deriving the modified Cayley transform of $\nabla_{t}^{-1}$, 
we have not been able to obtain the explicit expression for $D_{\alpha,\nu}^{(3)}$ like that in the above theorem. 
On the other hand, when $\alpha =\frac{n}{r}, \nu =0$, $\Psi_{\mathbf{m}}^{(\frac{n}{r},0)}(t)$ becomes 
\begin{align}
\Psi_{\mathbf{m}}^{(\frac{n}{r},0)}(t)&=\Delta(e-it)^{-\frac{n}{r}}d_{\mathbf{m}}\sum_{\mathbf{k}\subset \mathbf{m}}(-1)^{|\mathbf{k}|}\binom{\mathbf{m}}{\mathbf{k}}\Phi_{\mathbf{k}}(2(e-it)^{-1}) \nonumber \\
&=\Delta(e-it)^{-\frac{n}{r}}d_{\mathbf{m}}\Phi_{\mathbf{m}}(c^{-1}(t)). \\
\end{align}
%from the modified Cayley transform of
% On the other hand, in some special case when $\alpha =\frac{n}{r}, \nu =0$, $\Psi_{\mathbf{m}}^{(\frac{n}{r},0)}(t)$ and $D_{\frac{n}{r},0}^{(5)}$ whose the term of the pseudo-differential operator vanishes become%from the modified Cayley transform of 
% \begin{align}
% \Psi_{\mathbf{m}}^{(\frac{n}{r},0)}(t)&=\Delta(e-it)^{-\frac{n}{r}}d_{\mathbf{m}}\sum_{\mathbf{k}\subset \mathbf{m}}(-1)^{|\mathbf{k}|}\binom{\mathbf{m}}{\mathbf{k}}\Phi_{\mathbf{k}}(2(e-it)^{-1}) \nonumber \\
% &=\Delta(e-it)^{-\frac{n}{r}}d_{\mathbf{m}}\Phi_{\mathbf{m}}(c^{-1}(t)), \\
% \label{eq:Differential equation for Psi special}
% D_{\frac{n}{r},0}^{(5)}&=-i\tr((e+t^{2})\partial_{t})-\frac{n}{r}\tr{(e+it)}.
% \end{align}
Further, the term of the pseudo-differential operator vanishes for $D_{\frac{n}{r},0}^{(2)}$. 
Hence, 
\begin{equation}
\label{eq:Differential equation for Psi special}
D_{\frac{n}{r},0}^{(2)}=-i\tr((e+t^{2})\partial_{t})-\frac{n}{r}\tr{(e+it)}.
\end{equation}
Therefore, we obtain the following explicit expression for $D_{\frac{n}{r},0}^{(3)}$ from the modified Cayley transform of $D_{\frac{n}{r},0}^{(2)}$.
%We remark that if and only if $\alpha =\frac{n}{r}, \nu =0$, the term of the pseudo-differential operator vanishes. 
\begin{equation}
D_{\frac{n}{r},0}^{(3)}=2\tr{(\sigma{\nabla_{\sigma}})}.
\end{equation}
% \begin{prop}
% \begin{equation}
% D_{\frac{n}{r},0}^{(6)}=2\tr{(\sigma{\nabla_{\sigma}})}.
% \end{equation}
% \end{prop}
% \begin{proof}
% When $\alpha =\frac{n}{r}, \nu =0$, $D_{\frac{n}{r},0}^{(5)}$ which the term of the pseudo-differential operator vanishes becomes
% $\Psi_{\mathbf{m}}^{(\frac{n}{r},0)}$ becomes
% $$
% \Psi_{\mathbf{m}}^{(\frac{n}{r},0)}(t)=\Delta(e-it)^{-\frac{n}{r}}d_{\mathbf{m}}\sum_{\mathbf{k}\subset \mathbf{m}}(-1)^{|\mathbf{k}|}\binom{\mathbf{m}}{\mathbf{k}}\Phi_{\mathbf{k}}(2(e-it)^{-1})
% =\Delta(e-it)^{-\frac{n}{r}}d_{\mathbf{m}}\Phi_{\mathbf{m}}(c^{-1}(t))
% $$
% and 
% Moreover, its Cayley transform becomes  
% \end{proof}

\subsection{Determinant formulas}
In this subsection, we assume $d=2$ (In particular, we remark $\frac{n}{r}=r$). 
In the case $d=2$, the spherical polynomials $\Phi_{\mathbf{m}}$ degenerate to the Schur polynomials $s_{\mathbf{m}}$ (recall (\ref{eq:spherical and schur})). 
Further, there are some determinant formulas for $\Psi_{\mathbf{m}}^{(\alpha, \nu)}$ and $\phi_{\mathbf{m}}^{(\alpha,\nu)}$. 
Before giving main theorem, we provide some Lemmas needed to prove the determinant formulas. 
\begin{lem}[$\cite{H}$\,Theorem\,1.2.1]
\label{thm:det lem}
Consider $r$ power series of single variable $z \in \mathbb{C}$
$$
f_{j}(z)=\sum_{m\geq 0}A_{m}^{(j)}z^{j}\,\,\,\,(j=1,\cdots,r).
$$
Then, 
\begin{equation}
\label{eq:det lem}
\frac{\det{(f_{j}(z_{l}))}}{V(z_{1},\ldots,z_{r})}=\sum_{\mathbf{m} \in \mathscr{P}}A_{\mathbf{m}}s_{\mathbf{m}}(z_{1},\ldots,z_{r}), 
\end{equation}
where $V(z_{1},\ldots,z_{r})$ denote the Vandermonde determinant
$$
V(z_{1},\ldots,z_{r}):=\prod_{1\leq p<q\leq r}(z_{p}-z_{q}), 
$$
and 
$$
A_{\mathbf{m}}:=\det{(A_{m_{p}+r-p}^{(q)})}. 
$$
\end{lem}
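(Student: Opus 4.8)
The plan is to read the numerator $\det(f_j(z_l))$ as the determinant of a product of a semi-infinite matrix of Taylor coefficients with a (semi-infinite) Vandermonde-type matrix, and then to expand by multilinearity; the regrouping of the resulting terms produces exactly the ratio-of-alternants form of the Schur polynomials recorded in Lemma~\ref{thm:Upmeier Lem 2.6}. Writing $A_m^{(j)}$ for the coefficient of $z^m$ in $f_j$ (the exponent in the displayed power series should read $z^m$), I take the $(j,l)$ entry of the matrix to be $f_j(z_l)=\sum_{m\ge 0}A_m^{(j)}z_l^{m}$. Each of the $r$ columns is a superposition of the coefficient vectors $(A_m^{(j)})_{j=1}^r$ weighted by $z_l^m$, so by multilinearity of the determinant in its columns
\begin{equation*}
\det(f_j(z_l))=\sum_{(m_1,\ldots,m_r)\in\mathbb{Z}_{\geq 0}^r}\Big(\prod_{l=1}^r z_l^{m_l}\Big)\det\big(A_{m_l}^{(j)}\big)_{1\le j,l\le r}.
\end{equation*}

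Next I would observe that the inner determinant vanishes whenever two of the indices $m_l$ coincide, so only tuples with pairwise distinct entries survive. Grouping these tuples according to their underlying $r$-element set $S\subset\mathbb{Z}_{\geq 0}$ and summing over the $r!$ orderings of $S$ antisymmetrizes the monomials $\prod_l z_l^{m_l}$ into a generalized Vandermonde determinant. Using the standard bijection between $r$-element subsets $S$ and partitions $\mathbf{m}\in\mathscr{P}$ given by $S=\{\,m_p+r-p : 1\le p\le r\,\}$ (the map $\mathbf{m}\mapsto(m_1+r-1>\cdots>m_r)$ being a bijection onto strictly decreasing nonnegative sequences), the two factors attached to $S$ become precisely $\det\big(A_{m_p+r-p}^{(q)}\big)_{1\le p,q\le r}=A_{\mathbf{m}}$ and $\det\big(z_l^{\,m_p+r-p}\big)_{1\le p,l\le r}$; the reversal permutation used to pass to the decreasing listing enters both factors, so its sign cancels. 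By the ratio-of-alternants description of $s_{\mathbf{m}}$ in Lemma~\ref{thm:Upmeier Lem 2.6}, the latter determinant equals $s_{\mathbf{m}}(z_1,\ldots,z_r)\,V(z_1,\ldots,z_r)$. Collecting terms gives
\begin{equation*}
\det(f_j(z_l))=V(z_1,\ldots,z_r)\sum_{\mathbf{m}\in\mathscr{P}}A_{\mathbf{m}}\,s_{\mathbf{m}}(z_1,\ldots,z_r),
\end{equation*}
and dividing by $V(z_1,\ldots,z_r)$ yields (\ref{eq:det lem}). This whole passage is the Cauchy--Binet formula for the factorization above, made explicit.

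The one point requiring care --- and the main obstacle --- is the legitimacy of interchanging the $r$ infinite summations defining the $f_j$ with the finite alternating sum that is the determinant, together with the convergence of the final series over all partitions. I would dispose of this either by treating the identity in the ring of formal power series in $z_1,\ldots,z_r$, where the manipulation is purely algebraic and the distinctness argument is exact, or, for the analytic statement, by restricting $z_1,\ldots,z_r$ to the common disc of convergence of the $f_j$, where absolute convergence justifies the rearrangement by Fubini and guarantees that $\sum_{\mathbf{m}}A_{\mathbf{m}}s_{\mathbf{m}}(z_1,\ldots,z_r)$ converges. Everything else is the bookkeeping of signs in the antisymmetrization, which is routine.
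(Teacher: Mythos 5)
Your proof is correct. Note that the paper does not actually prove this lemma --- it is imported verbatim from Hua \cite{H}, Theorem 1.2.1 --- so there is no internal argument to compare against; your derivation (multilinearity of the determinant in the columns, vanishing of the coefficient determinant for repeated indices, antisymmetrization over the $r!$ orderings of each $r$-subset via the bijection $S=\{m_p+r-p\}$, and identification of $\det(z_l^{m_p+r-p})$ with $s_{\mathbf{m}}\cdot V$ by the ratio of alternants) is precisely the classical Cauchy--Binet argument that underlies Hua's theorem. You also correctly flag the typo $z^{j}$ for $z^{m}$ in the statement and handle the only delicate point, the interchange of the infinite sums with the determinant, either formally or by absolute convergence inside the common disc of convergence. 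Nothing is missing.
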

\begin{cor}
For $w=\sum_{j=1}^{r}w_{j}c_{j}, z=\sum_{j=1}^{r}z_{j}c_{j}$ with $w_{1},\ldots,w_{r},z_{1},\ldots,z_{r} \in \mathbb{C}\setminus \{0\}$, 
\begin{equation}
\label{det express of Cauchy ker}
\Delta(w)^{-\alpha}\int_{K}\Delta(kw^{-1}-z)^{-\alpha}dk
=\delta !\prod_{j=1}^{r}\frac{1}{(\alpha-r+1)_{j-1}}\frac{\det((1-w_{p}z_{q})^{-(\alpha-r+1)})}{V(w_{1},\ldots,w_{r})V(z_{1},\ldots,z_{r})}.
\end{equation}
\end{cor}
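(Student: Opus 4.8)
The plan is to combine Lemma~\ref{thm:Cauchy kernel of spherical poly} with the $d=2$ degeneration of the spherical functions into Schur polynomials, and then recognize the resulting symmetric double sum through Hua's determinant identity (Lemma~\ref{thm:det lem}).

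First I would rewrite the left-hand side. By Lemma~\ref{thm:Cauchy kernel of spherical poly},
\begin{equation*}
\Delta(w)^{-\alpha}\int_{K}\Delta(kw^{-1}-z)^{-\alpha}\,dk
=\sum_{\mathbf{m}\in\mathscr{P}}d_{\mathbf{m}}\frac{(\alpha)_{\mathbf{m}}}{\left(\frac{n}{r}\right)_{\mathbf{m}}}\Phi_{\mathbf{m}}(z)\Phi_{\mathbf{m}}(w).
\end{equation*}
Since $d=2$ forces $\tfrac{n}{r}=r$, and combining (\ref{eq:spherical and schur}) with (\ref{eq:d and Schur}) gives $d_{\mathbf{m}}\Phi_{\mathbf{m}}(z)\Phi_{\mathbf{m}}(w)=s_{\mathbf{m}}(z)s_{\mathbf{m}}(w)$, the task reduces to proving the Schur-polynomial identity
\begin{equation*}
\sum_{\mathbf{m}\in\mathscr{P}}\frac{(\alpha)_{\mathbf{m}}}{(r)_{\mathbf{m}}}s_{\mathbf{m}}(w)s_{\mathbf{m}}(z)
=\delta!\prod_{j=1}^{r}\frac{1}{(\alpha-r+1)_{j-1}}\frac{\det\bigl((1-w_pz_q)^{-(\alpha-r+1)}\bigr)}{V(w)V(z)}.
\end{equation*}

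Next I would attack the right-hand side with Lemma~\ref{thm:det lem}. Writing $\beta:=\alpha-r+1$ and expanding each entry as a power series in $z_q$,
\begin{equation*}
(1-w_pz_q)^{-\beta}=\sum_{m\geq 0}\frac{(\beta)_m}{m!}\,w_p^{m}\,z_q^{m},
\end{equation*}
I apply Lemma~\ref{thm:det lem} with $f_p(z)=(1-w_pz)^{-\beta}$, i.e.\ $A_m^{(p)}=\frac{(\beta)_m}{m!}w_p^m$. This yields $\det((1-w_pz_q)^{-\beta})/V(z)=\sum_{\mathbf{m}}A_{\mathbf{m}}s_{\mathbf{m}}(z)$ with $A_{\mathbf{m}}=\det\bigl(\frac{(\beta)_{m_p+r-p}}{(m_p+r-p)!}w_q^{m_p+r-p}\bigr)$. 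The scalar $\frac{(\beta)_{m_p+r-p}}{(m_p+r-p)!}$ depends only on the row index $p$, so it pulls out of the determinant row by row, and the remaining bialternant $\det(w_q^{m_p+r-p})$ equals $s_{\mathbf{m}}(w)V(w)$. Hence
\begin{equation*}
\frac{\det((1-w_pz_q)^{-\beta})}{V(w)V(z)}
=\sum_{\mathbf{m}\in\mathscr{P}}\Bigl(\prod_{p=1}^{r}\frac{(\beta)_{m_p+r-p}}{(m_p+r-p)!}\Bigr)s_{\mathbf{m}}(w)s_{\mathbf{m}}(z).
\end{equation*}

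It then remains to match coefficients. Using the splitting $(\beta)_{m_p+r-p}=(\beta)_{r-p}(\beta+r-p)_{m_p}$ with $\beta+r-p=\alpha-p+1$, together with $(m_p+r-p)!=(r-p)!\,(r-p+1)_{m_p}$, I factor the $p$-product into a constant part $\prod_{p}\frac{(\beta)_{r-p}}{(r-p)!}=\frac{1}{\delta!}\prod_{j=1}^{r}(\alpha-r+1)_{j-1}$ and a partition-dependent part $\prod_p\frac{(\alpha-p+1)_{m_p}}{(r-p+1)_{m_p}}=\frac{(\alpha)_{\mathbf{m}}}{(r)_{\mathbf{m}}}$ (recalling that for $d=2$ one has $(\alpha)_{\mathbf{m}}=\prod_p(\alpha-p+1)_{m_p}$ and $(r)_{\mathbf{m}}=\prod_p(r-p+1)_{m_p}$). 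Rearranging gives exactly the claimed identity. The one genuine subtlety is analytic bookkeeping rather than algebra: the partition sum converges for $|w_pz_q|<1$, so the equality is first established in that polydisc and then extended to $w_p,z_q\in\mathbb{C}\setminus\{0\}$ by analytic continuation, with the branch of $(1-w_pz_q)^{-\beta}$ normalized to take the value $1$ at the origin. The term-by-term application of Lemma~\ref{thm:det lem} is the main step, but it becomes routine once the Pochhammer shifts above are organized.
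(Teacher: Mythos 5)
Your proposal is correct and follows essentially the same route as the paper: expand each entry $(1-w_pz_q)^{-(\alpha-r+1)}$ as a power series, apply Hua's determinant lemma (Lemma~\ref{thm:det lem}) to obtain a Schur-function expansion, identify the coefficients with $\frac{(\alpha)_{\mathbf{m}}}{\left(\frac{n}{r}\right)_{\mathbf{m}}}$ via the Pochhammer splitting, match against Lemma~\ref{thm:Cauchy kernel of spherical poly} using the $d=2$ degenerations (\ref{eq:d and Schur}) and (\ref{eq:spherical and schur}), and finish by analytic continuation. You simply write out the coefficient bookkeeping that the paper leaves implicit.
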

\begin{proof}
We assume $|z_{j}|,|w_{j}|<1$ for $j=1,\ldots,r$ and consider the $r$ power series
$$
f_{j}(w)=(1-wz_{j})^{-(\alpha -r+1)}=\sum_{m\geq 0}\frac{(\alpha -r+1)_{m}}{m!}w^{m}z_{j}^{m}\,\,\,\,(j=1,\cdots,r). 
$$
From Lemma\,\ref{thm:det lem}, we have 
\begin{align}
\frac{\det((1-w_{p}z_{q})^{-(\alpha-r+1)})}{V(w_{1},\ldots,w_{r})V(z_{1},\ldots,z_{r})}
&=\sum_{\mathbf{m} \in \mathscr{P}}\det\left(\frac{(\alpha -r+1)_{m_{p}+r-p}}{(m_{p}+r-p)!}w_{q}^{m_{p}+r-p}\right)\frac{s_{\mathbf{m}}(z_{1},\ldots,z_{r})}{V(w_{1},\ldots,w_{r})} \nonumber \\
{} &=\sum_{\mathbf{m} \in \mathscr{P}}\left\{\prod_{j=1}^{r}\frac{(\alpha -r+1)_{m_{j}+r-j}}{m_{j}+r-j}\right\}s_{\mathbf{m}}(w_{1},\ldots,w_{r})s_{\mathbf{m}}(z_{1},\ldots,z_{r}) \nonumber \\
{} &=\frac{1}{\delta !}\prod_{j=1}^{r}(\alpha-r+1)_{j-1}
\sum_{\mathbf{m} \in \mathscr{P}}\frac{(\alpha)_{\mathbf{m}}}{\left(\frac{n}{r}\right)_{\mathbf{m}}}s_{\mathbf{m}}(w_{1},\ldots,w_{r})s_{\mathbf{m}}(z_{1},\ldots,z_{r}). \nonumber
% {} &=(-1)^{\frac{r(r-1)}{2}}\left\{\prod_{j=1}^{r}\frac{(j-\alpha)_{r-j}}{(j-1)!}\right\} \nonumber \\
% {} & \quad \cdot 
% \sum_{\mathbf{m} \in \mathscr{P}}\frac{(\alpha)_{\mathbf{m}}}{\left(\frac{n}{r}\right)_{\mathbf{m}}}s_{\mathbf{m}}(w_{1},\ldots,w_{r})s_{\mathbf{m}}(z_{1},\ldots,z_{r}). \nonumber
\end{align}
Further, by (\ref{eq:d and Schur}), (\ref{eq:spherical and schur}) and (\ref{eq:Cauchy kernel of spherical poly}), we prove (\ref{det express of Cauchy ker}) for $|z_{1}|,\ldots,|z_{r}|,|w_{1}|,\ldots,|w_{r}|<1$. 
Finally, by analytic continuation, we prove this corollary holds for $w_{1},\ldots,w_{r},z_{1},\ldots,z_{r} \in \mathbb{C}\setminus \{0\}$.   
% \begin{equation}
% (-1)^{\frac{r(r-1)}{2}}\prod_{j=1}^{r}\frac{(j-1)!}{(j-\alpha)_{r-j}}\frac{\det((1-w_{p}z_{q})^{-(\alpha-r+1)})}{V(w_{1},\ldots,w_{r})V(z_{1},\ldots,z_{r})}
% ({\text{LHS of (\ref{det express of Cauchy ker})}})
% =\sum_{\mathbf{m} \in \mathscr{P}}\frac{(\alpha)_{\mathbf{m}}}{\left(\frac{n}{r}\right)_{\mathbf{m}}}s_{\mathbf{m}}(w_{1},\ldots,w_{r})s_{\mathbf{m}}(z_{1},\ldots,z_{r}). 
% \end{equation}
\end{proof}

\begin{thm}
{\rm{(1)}}\,Let $t=\sum_{j=1}^{r}t_{j}c_{j} \in V$. We have
\begin{equation}
\label{eq:det express of Psi}
\Psi_{\mathbf{m}}^{(\alpha, \nu)}(t)=\frac{s_{\mathbf{m}}(1,\ldots,1)}{(-2i)^{\frac{r(r-1)}{2}}}\delta !
\prod_{j=1}^{r}\frac{1}{\left(\frac{1}{2}(\alpha -r)+i\nu +1\right)_{j-1}}\frac{\det(\Psi_{m_{p}+r-p}^{(\alpha-r+1, \nu)}(t_{q}))}{V(t_{1},\ldots,t_{r})}.
\end{equation}

\noindent
{\rm{(2)}}\,Let $\sigma=\sum_{j=1}^{r}\sigma_{j}c_{j} \in \Sigma$. We have
\begin{equation}
\label{eq:det express of phi}
\phi_{\mathbf{m}}^{(\alpha,\nu)}(\sigma)=s_{\mathbf{m}}(1,\ldots,1)\delta !
\prod_{j=1}^{r}\frac{1}{\left(\frac{1}{2}(\alpha -r)+i\nu +1\right)_{j-1}}\frac{\det(\phi_{m_{p}+r-p}^{(\alpha-r+1, \nu)}(\sigma_{q}))}{V(\sigma_{1},\ldots,\sigma_{r})}.
\end{equation}
\end{thm}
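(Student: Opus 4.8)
The plan is to prove part (2) first, directly from the generating function (\ref{eq:generating fnc of MCJ}), and then to deduce part (1) from (\ref{eq:det express of phi}) by the modified Cayley transform; this is cleaner than handling the two identities separately. Throughout I abbreviate $s:=\frac{1}{2}(\alpha+\frac{n}{r})+i\nu=\frac{1}{2}(\alpha+r)+i\nu$ and $\beta:=\alpha-r+1$, and I use that for $d=2$ one has $\frac{n}{r}=r$ and $\Phi_{\mathbf{m}}(z)=s_{\mathbf{m}}(z_1,\dots,z_r)/s_{\mathbf{m}}(1,\dots,1)$ by (\ref{eq:d and Schur}) and (\ref{eq:spherical and schur}), so that extracting the coefficient of $s_{\mathbf{m}}$ in a generating series computes the corresponding MCJ quantity.

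For part (2) I would specialize (\ref{eq:generating fnc of MCJ}) to $z=\sum_jz_jc_j$, $\sigma=\sum_j\sigma_jc_j$ and apply the Corollary (\ref{det express of Cauchy ker}) to the inner integral $\int_K\Delta(z(e-z)^{-1}+k(e-\sigma)^{-1})^{-s}\,dk$, taking exponent $s$, prefactor variable $w=e-\sigma$ and external variable $-z(e-z)^{-1}$; the factor $\Delta(e-\sigma)^{s}$ it produces cancels the $\Delta(e-\sigma)^{-s}$ in (\ref{eq:generating fnc of MCJ}). Using $1-(1-\sigma_p)\bigl(-\tfrac{z_q}{1-z_q}\bigr)=\frac{1-\sigma_pz_q}{1-z_q}$ I pull the column factors $(1-z_q)^{s-r+1}$ out of the determinant and reduce the two Vandermondes to $V(\sigma_1,\dots,\sigma_r)$ and $V(z_1,\dots,z_r)$ (the signs $(-1)^{\binom{r}{2}}$ cancel). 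Collecting the powers of $(1-z_q)$ from $\Delta(e-z)^{-\alpha}$, from these column factors and from the Vandermonde of $-z(e-z)^{-1}$ leaves exactly $(1-z_q)^{-\frac{1}{2}(\beta-1)+i\nu}$ per column, so the $(p,q)$ entry becomes $(1-z_q)^{-\frac{1}{2}(\beta-1)+i\nu}(1-\sigma_pz_q)^{-\frac{1}{2}(\beta+1)-i\nu}$, which is precisely the rank-one generating series $\sum_{m\ge0}\phi_m^{(\beta,\nu)}(\sigma_p)\,z_q^{\,m}$ (the $r=1$ case of (\ref{eq:generating fnc of MCJ})). Expanding by Lemma~\ref{thm:det lem} in $z_1,\dots,z_r$ and comparing the coefficient of $s_{\mathbf{m}}(z)$ gives $\det(\phi_{m_p+r-p}^{(\beta,\nu)}(\sigma_q))/V(\sigma_1,\dots,\sigma_r)$ together with the constant $\delta!\prod_j(s-r+1)_{j-1}^{-1}$, i.e. (\ref{eq:det express of phi}).

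For part (1) I would use $\mathcal{C}_{\alpha,\nu}\phi_{\mathbf{m}}^{(\alpha,\nu)}=\Psi_{\mathbf{m}}^{(\alpha,\nu)}$ from Theorem~\ref{thm:main theorem1}, i.e. $\Psi_{\mathbf{m}}^{(\alpha,\nu)}(t)=\Delta(e-it)^{-s}\phi_{\mathbf{m}}^{(\alpha,\nu)}(c^{-1}(t))$, and set $\sigma_j=c^{-1}(t_j)=\frac{t_j-i}{t_j+i}$, so $\sigma=c^{-1}(t)\in\Sigma$. Substituting into (\ref{eq:det express of phi}) and using the rank-one Cayley relation $\phi_m^{(\beta,\nu)}(\sigma_q)=(1-it_q)^{\frac{1}{2}(\beta+1)+i\nu}\Psi_m^{(\beta,\nu)}(t_q)$ pulls a column factor $(1-it_q)^{\frac{1}{2}(\beta+1)+i\nu}$ out of the determinant; moreover $\sigma_p-\sigma_q=\frac{2i(t_p-t_q)}{(t_p+i)(t_q+i)}$ gives $V(\sigma_1,\dots,\sigma_r)=(2i)^{\binom{r}{2}}V(t_1,\dots,t_r)\prod_j(t_j+i)^{-(r-1)}$. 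Writing $t_j+i=i(1-it_j)$, the total exponent of each $(1-it_j)$ coming from $\Delta(e-it)^{-s}$, from the pulled-out factors and from $(t_j+i)^{r-1}$ equals $-s+\bigl(\tfrac{1}{2}(\beta+1)+i\nu\bigr)+(r-1)=0$, so every $t$-dependent scalar cancels and only $\det(\Psi_{m_p+r-p}^{(\beta,\nu)}(t_q))/V(t_1,\dots,t_r)$ remains. The residual constant $(2i)^{-\binom{r}{2}}$ from $V(\sigma)$ times $i^{r(r-1)}=(-1)^{\binom{r}{2}}$ from $\prod_j(t_j+i)^{r-1}$ equals $(-2i)^{-\frac{r(r-1)}{2}}$, which with the surviving $s_{\mathbf{m}}(1,\dots,1)\,\delta!\prod_j(s-r+1)_{j-1}^{-1}$ is exactly (\ref{eq:det express of Psi}).

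The main obstacle is the bookkeeping in these last two steps: one must fix the branches of all generalized power functions and track every scalar factor $(1-z_j)$, $(1-it_j)$, $(t_j+i)$ along with the signs $(-1)^{\binom{r}{2}}$ and the powers of $i$ and $2$, in order to confirm that all variable-dependent prefactors cancel and that the leftover constant is precisely $(-2i)^{-\frac{r(r-1)}{2}}$ in (\ref{eq:det express of Psi}) and trivial in (\ref{eq:det express of phi}). A secondary subtlety is the index convention of Lemma~\ref{thm:det lem}: the expansion must be carried out in the auxiliary variables $z_q$, not in $\sigma_q$ or $t_q$, so that the final determinant carries the arguments $\sigma_q$ (resp. $t_q$) and is divided by $V(\sigma)$ (resp. $V(t)$), as stated.
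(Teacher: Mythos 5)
Your proposal is correct and follows essentially the same route as the paper, only in mirror order: the paper first derives (\ref{eq:det express of Psi}) by applying the Cauchy--kernel determinant formula (\ref{det express of Cauchy ker}) and Lemma~\ref{thm:det lem} to the generating function (\ref{eq:generating fnc of Psi}) for $\Psi_{\mathbf{m}}^{(\alpha,\nu)}$, and then obtains (\ref{eq:det express of phi}) by the modified Cayley transform, whereas you run the identical computation on the $\phi$-side generating function (\ref{eq:generating fnc of MCJ}) first and transfer to $\Psi$ afterwards. The bookkeeping of the factors $(1-z_j)$, $(1-it_j)$, the Vandermondes and the constant $(-2i)^{-\frac{r(r-1)}{2}}$ in your outline all check out, so no gap.
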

\begin{proof}
{\rm{(1)}}\,We start from the generating formula for $\Psi_{\mathbf{m}}^{(\alpha, \nu)}(t)$. 
For $z=\sum_{j=1}^{r}z_{j}c_{j}, 0<z_{1},\ldots,z_{r}<1$, 
\begin{align}
\sum_{\mathbf{m} \in \mathscr{P}}\Psi_{\mathbf{m}}^{(\alpha,\nu)}(t)\Phi_{\mathbf{m}}(z)
&=\Delta(e-z)^{-\alpha}\int_{K}\Delta((e+z)(e-z)^{-1}-ikt)^{-\frac{1}{2}\left(\alpha +r\right)-i\nu}\,dk \nonumber \\
% &=\Delta(e-z)^{-\alpha}\Delta((e+z)(e-z)^{-1})^{-\frac{1}{2}\left(\alpha +\frac{n}{r}\right)-i\nu} \nonumber \\
% & \quad \cdot \Delta((e+z)(e-z)^{-1})^{\frac{1}{2}\left(\alpha +\frac{n}{r}\right)+i\nu}
% \int_{K}\Delta((e+z)(e-z)^{-1}-ikt)^{-\frac{1}{2}\left(\alpha +\frac{n}{r}\right)-i\nu}\,dk \nonumber \\
&=\prod_{j=1}^{r}(1-z_{j})^{-\alpha}\left(\frac{1+z_{j}}{1-z_{j}}\right)^{-\frac{1}{2}\left(\alpha +r\right)-i\nu} \nonumber \\
& \quad \cdot \delta !\prod_{j=1}^{r}\frac{1}{\left(\frac{1}{2}(\alpha -r)+i\nu +1\right)_{j-1}}
\frac{\det\left(\left(1-\frac{1-z_{p}}{1+z_{p}}it_{q}\right)^{-\frac{1}{2}(\alpha -r)-i\nu -1}\right)}{V\left(\frac{1-z_{1}}{1+z_{1}},\ldots,\frac{1-z_{r}}{1+z_{r}}\right)V(it_{1},\ldots,it_{r})}. \nonumber
%&=\Delta(e-z)^{-\alpha}\Delta((e+z)(e-z)^{-1})^{-\frac{1}{2}\left(\alpha +\frac{n}{r}\right)-i\nu} \nonumber \\
% & \quad \cdot \frac{\det\left(\left(1-\frac{1-z_{p}}{1+z_{p}}it_{q}\right)^{-\frac{1}{2}(\alpha -r)-i\nu -1}\right)}{V\left(\frac{1-z_{1}}{1+z_{1}},\ldots,\frac{1-z_{r}}{1+z_{r}}\right)V(it_{1},\ldots,it_{r})}
\end{align}
The second equality follows from (\ref{det express of Cauchy ker}). 
Further, noticing that 
$$
\frac{1-z_{p}}{1+z_{p}}-\frac{1-z_{q}}{1+z_{q}}=-2\frac{z_{p}-z_{q}}{(1+z_{p})(1+z_{q})},
$$
we obtain
\begin{align}
\sum_{\mathbf{m} \in \mathscr{P}}\Psi_{\mathbf{m}}^{(\alpha,\nu)}(t)\Phi_{\mathbf{m}}(z)
% &=(-2i)^{\frac{r(r-1)}{2}}\delta !\prod_{j=1}^{r}\frac{1}{\left(\frac{1}{2}(\alpha -r)+i\nu +1\right)_{j-1}} \nonumber \\
% & \quad \cdot 
% \frac{\det\left((1-z_{p})^{-\alpha}\left(\frac{1+z_{p}}{1-z_{p}}\right)^{-\frac{1}{2}\left(\alpha +\frac{n}{r}\right)-i\nu}\left(1-\frac{1-z_{p}}{1+z_{p}}it_{q}\right)^{-\frac{1}{2}(\alpha -r)-i\nu -1}\right)}{V\left(z_{1},\ldots,z_{r}\right)V(t_{1},\ldots,t_{r})}. \nonumber
&=(-2i)^{-\frac{r(r-1)}{2}}\delta !\prod_{j=1}^{r}\frac{1}{\left(\frac{1}{2}(\alpha -r)+i\nu +1\right)_{j-1}} \nonumber \\
& \quad \cdot 
\frac{\det\left((1-z_{p})^{-(\alpha-r+1)}\left(\frac{1+z_{p}}{1-z_{p}}-it_{q}\right)^{-\frac{1}{2}(\alpha -r+1)-\frac{1}{2}-i\nu}\right)}{V\left(z_{1},\ldots,z_{r}\right)V(t_{1},\ldots,t_{r})}. \nonumber
\end{align}
%Here, by (\ref{eq:generating fnc of Psi}), we consider the following $r$ power series 
Here, by (\ref{eq:generating fnc of Psi}), we remark
$$
f_{q}(z)=(1-z)^{-(\alpha-r+1)}\left(\frac{1+z}{1-z}-it_{q}\right)^{-\frac{1}{2}(\alpha -r+1)-\frac{1}{2}-i\nu}
=\sum_{m\geq 0}\Psi_{m}^{(\alpha-r+1,\nu)}(t_{q})z^{m}. 
$$
Therefore, we expand the above determinant expression in Schur function series by using Lemma\,\ref{thm:det lem}.
\begin{align}
\sum_{\mathbf{m} \in \mathscr{P}}\Psi_{\mathbf{m}}^{(\alpha,\nu)}(t)\Phi_{\mathbf{m}}(z)
&=(-2i)^{-\frac{r(r-1)}{2}}\delta !\prod_{j=1}^{r}\frac{1}{\left(\frac{1}{2}(\alpha -r)+i\nu +1\right)_{j-1}} \nonumber \\
& \quad \cdot 
\sum_{\mathbf{m} \in \mathscr{P}}\frac{\det(\Psi_{m_{p}+r-p}^{(\alpha-r+1,\nu)}(t_{q}))}{V(t_{1},\ldots,t_{r})}s_{\mathbf{m}}(z_{1},\ldots,z_{r}). \nonumber
\end{align}
Finally, by comparing of $s_{\mathbf{m}}(z_{1},\ldots,z_{r})=s_{\mathbf{m}}(1,\ldots,1)\Phi_{\mathbf{m}}(z)$ on the above equation for $\mathbf{m} \in \mathscr{P}$, we obtain (\ref{eq:det express of Psi}).

\noindent
{\rm{(2)}}\,Applying the modified Cayley transform $\mathcal{C}_{\alpha,\nu}^{-1}$ to (\ref{eq:det express of Psi}), we obtain
\begin{align}
\phi_{\mathbf{m}}^{(\alpha,\nu)}(\sigma)
&=(-2i)^{-\frac{r(r-1)}{2}}s_{\mathbf{m}}(1,\ldots,1)\delta !\prod_{j=1}^{r}\frac{1}{\left(\frac{1}{2}(\alpha -r)+i\nu +1\right)_{j-1}} \nonumber \\
& \quad \cdot \prod_{j=1}^{r}\left(\frac{1-\sigma_{j}}{2}\right)^{-\frac{1}{2}(\alpha +r)-i\nu}\frac{\det\left(\Psi_{m_{p}+r-p}^{(\alpha-r+1, \nu)}\left(i\frac{1+\sigma_{q}}{1-\sigma_{q}}\right)\right)}{V\left(i\frac{1+\sigma_{1}}{1-\sigma_{1}},\ldots,i\frac{1+\sigma_{r}}{1-\sigma_{r}}\right)}. \nonumber
% & \quad \cdot \left\{\prod_{j=1}^{r}\left(\frac{1-\sigma_{j}}{2}\right)\right\}^{-\frac{1}{2}(\alpha +r)-i\nu}\frac{\det\left(\Psi_{m_{p}+r-p}^{(\alpha-r+1, \nu)}\left(i\frac{1+\sigma_{q}}{1-\sigma_{q}}\right)\right)}{V\left(i\frac{1+\sigma_{1}}{1-\sigma_{1}},\ldots,i\frac{1+\sigma_{r}}{1-\sigma_{r}}\right)}. \nonumber
\end{align}
Since 
$$
i\frac{1+\sigma_{p}}{1-\sigma_{p}}-i\frac{1+\sigma_{q}}{1-\sigma_{q}}=2i\frac{\sigma_{p}-\sigma_{q}}{(1-\sigma_{p})(1-\sigma_{q})}
$$
and 
$$
\phi_{m_{p}+r-p}^{(\alpha-r+1, \nu)}(\sigma_{q})=\left(\frac{1-\sigma_{q}}{2}\right)^{-\frac{1}{2}((\alpha -r+1)+1)-i\nu}\Psi_{m_{p}+r-p}^{(\alpha-r+1, \nu)}\left(i\frac{1+\sigma_{q}}{1-\sigma_{q}}\right),
$$
we have 
\begin{align}
(-2i)^{-\frac{r(r-1)}{2}}\frac{\det\left(\Psi_{m_{p}+r-p}^{(\alpha-r+1, \nu)}\left(i\frac{1+\sigma_{q}}{1-\sigma_{q}}\right)\right)}{V\left(i\frac{1+\sigma_{1}}{1-\sigma_{1}},\ldots,i\frac{1+\sigma_{r}}{1-\sigma_{r}}\right)}
&=\prod_{j=1}^{r}\left(\frac{1-\sigma_{j}}{2}\right)^{\frac{1}{2}(\alpha +r)+i\nu}\frac{\det(\phi_{m_{p}+r-p}^{(\alpha-r+1, \nu)}(\sigma_{q}))}{V(\sigma_{1},\ldots,\sigma_{r})}. \nonumber
\end{align}
Therefore, we obtain the conclusion. 
\end{proof}

\subsection{One variable case}
In this subsection, we have assumed that $r=1$. 
% In this case, we obtain accurate results than the general case. 
% Hence, we describe such results in here.  

First, we remark that (\ref{eq:def of MCJ}) becomes
\begin{align}
\phi_{m}^{(\alpha, \nu)}(\sigma)&:=\frac{(\alpha)_{m}}{m!}\sum_{k=0}^{m}(-1)^{k}\binom{m}{k}\frac{\left(\frac{1}{2}(\alpha +1)+i\nu \right)_{k}}{(\alpha)_{k}}(1-\sigma)^{k} \\
&=\frac{(\alpha)_{m}}{m!}{_{2}F_1}\left(\begin{matrix}-m,\frac{1}{2}(\alpha+1)+i\nu \\{\alpha} \end{matrix};1-\sigma \right) \\
&=\frac{\left(\frac{\alpha -1}{2}-i\nu \right)}{m!}{_{2}F_1}\left(\begin{matrix}-m,\frac{1}{2}(\alpha+1)+i\nu \\{-m-\frac{\alpha -3}{2}+i\nu } \end{matrix};\sigma \right).
\end{align}
and for $\alpha >0, \nu \in \mathbb{R}$, (\ref{eq:MCJ orthogonal}) degenerates to 
\begin{align}
\frac{1}{2\pi{i}}\int_{\Sigma} \phi_{m}^{(\alpha, \nu)}(\sigma)\overline{\phi_{n}^{(\alpha, \nu)}(\sigma)}
  %(1-e^{i\theta})^{\frac{\alpha-1}{2}+i\nu}(1-e^{-i\theta})^{\frac{\alpha-1}{2}-i\nu}d\theta \nonumber \\
|(1-\sigma)^{\frac{\alpha-1}{2}+i\nu}|^{2}\,d\mu(\sigma)=\frac{\Gamma(\alpha+m)}{m!}\frac{1}{\left|\Gamma(\frac{\alpha+1}{2}+i\nu)\right|^{2}}\delta_{mn}.
\end{align}
That is a 1-parameter deformation of the usual circular Jacobi polynomial that coincides with $\phi_{m}^{(\alpha, 0)}(\sigma)$. %and their orthogonality relation(\cite{As}, \cite{Is}). 
  %Furthermore, our proof of Theorem\,\ref{thm:MCJ orthogonal} gives a multiple-analogue of the result in Shen \cite{She}.
In particular, $\phi_{m}^{(1, 0)}(\sigma)=\sigma^{m}$ and 
$$
\frac{1}{2\pi{i}}\int_{\Sigma} \phi_{m}^{(1, 0)}(\sigma)\overline{\phi_{n}^{(1, 0)}(\sigma)}\,d\mu(\sigma)
=\frac{1}{2\pi}\int_{0}^{2\pi}e^{im\theta}e^{-in\theta}\,d\theta=\delta_{mn}. 
$$
We also remark that the rank $1$ case of (\ref{eq:CJacobi and MP}) is %$\phi_{m}^{(\alpha, \nu)}(e^{i\theta})$ can be expressed by using the Meixner-Pollaczek polynomial. 
\begin{equation}
\phi_{m}^{(\alpha, \nu)}(e^{i\theta})=q_{m}^{\left(\alpha, -\frac{\theta}{2}\right)}\left(\nu +\frac{1}{2i}\right)=e^{m\frac{i\theta}{2}}P_{m}^{\left(\frac{\alpha}{2}\right)}\left(\nu +\frac{1}{2i};-\frac{\theta}{2}\right),
\end{equation}
That means if $\theta$ is regarded as a parameter and $\nu$ is regarded as a variable for the circular Jacobi polynomial, 
then we can consider the circular Jacobi polynomial to be the Meixner-Pollaczek polynomial. 
  % However, in the multivariable case, 
  % This interpretation does not hold in general

Moreover, the generating function of $\phi_{m}^{(\alpha, \nu)}(\sigma)$ is given by
\begin{equation}
\sum_{m\geq 0}\phi_{m}^{(\alpha, \nu)}(\sigma)z^{m}=(1-z)^{-\frac{1}{2}(\alpha -1)+i\nu}(1-\sigma z)^{-\frac{1}{2}(\alpha +1)-i\nu}.
\end{equation}

% Though the above results are specialization of our previous results,  
% for differential equation of $\phi_{m}^{(\alpha, \nu)}(\sigma)$, from the differential equation of ${_{2}F_1}$, we obtain the following accurate result than the general case. 
Although, we have not been given an explicit expression for the differential relation of $\phi_{\mathbf{m}}^{(\alpha, \nu)}(\sigma)$ in the multivariate case, 
we obtain the following explicit result in the one variable case from the differential equation of ${_{2}F_1}$. 
% \begin{align}
% &\left\{\sigma (1-\sigma )\partial_{\sigma}^{2}+\left\{\left(-m+\frac{3}{2}+i\nu \right)(1-\sigma)-\frac{\alpha}{2}(1+\sigma)\right\}\partial_{\sigma} \right. \nonumber \\
% {} & \left. \quad \quad \quad \quad \quad \,\,\, +m\left(\frac{1}{2}(\alpha +1)+i\nu \right)\right\}\phi_{m}^{(\alpha, \nu)}(\sigma)=0.
% \end{align}
%   \begin{align}
%   \sigma (1-\sigma )\frac{d^{2}\phi_{m}^{(\alpha, \nu)}}{d\sigma ^{2}}&+\left\{\left(-m+\frac{3}{2}+i\nu \right)(1-\sigma)-\frac{\alpha}{2}(1+\sigma)\right\}\frac{d\phi_{m}^{(\alpha, \nu)}}{d\sigma}  \nonumber \\
%   &+m\left(\frac{1}{2}(\alpha +1)+i\nu \right)\phi_{m}^{(\alpha, \nu)}=0.
%   \end{align}
% Actually, since in this case
% \begin{align}
% \Psi_{m}^{(\alpha,\nu)}(t)&:=(1-it)^{-\frac{1}{2}\left(\alpha +1\right)-i\nu}\frac{(\alpha)_{m}}{m!}\sum_{k\geq 0}(-1)^{k}\binom{m}{k}\frac{\left(\frac{1}{2}\left(\alpha +1\right)+i\nu\right)_{k}}{(\alpha)_{k}}\left(\frac{2}{1-it}\right)^{k} \nonumber \\
% &=(1-it)^{-\frac{1}{2}\left(\alpha +1\right)-i\nu}\frac{(\alpha)_{m}}{m!}{_{2}F_1}\left(\begin{matrix}-m,\frac{1}{2}(\alpha+1)+i\nu \\{\alpha} \end{matrix};\frac{2}{1-it}\right), \nonumber \\
%  %=\frac{(\alpha)_{m}}{m!}e^{-u}{_{1}F_1}\left(\begin{matrix}-m\\ \alpha  \end{matrix};2u\right)
% D_{\alpha,\nu}^{(5)}&=-i(1+t^{2})\partial_{t}+(2\nu-i)t-\alpha+i\left(\frac{1}{4}(\alpha -1)^{2}+\nu^{2} \right)\partial_{t}^{-1}. 
% \end{align}
\begin{prop}
\label{thm:one variable differential}
If
\begin{align}
D_{\alpha ,\nu }:=\sigma (1-\sigma )\partial_{\sigma}^{2}+\left\{\left(-m+\frac{3}{2}+i\nu \right)(1-\sigma)-\frac{\alpha}{2}(1+\sigma)\right\}\partial_{\sigma}+m\left(\frac{1}{2}(\alpha +1)+i\nu \right),
\end{align}
then
\begin{equation}
D_{\alpha ,\nu }\phi_{m}^{(\alpha, \nu)}(\sigma)=0.
\end{equation}
\end{prop}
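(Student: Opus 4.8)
The plan is to reduce the claim to the classical Gauss hypergeometric differential equation by means of the one-variable hypergeometric representation recorded just above. Writing $a=-m$, $b=\tfrac{1}{2}(\alpha+1)+i\nu$, $c=\alpha$, that representation reads $\phi_{m}^{(\alpha,\nu)}(\sigma)=\tfrac{(\alpha)_{m}}{m!}\,w(1-\sigma)$, where $w(x)={_{2}F_1}(a,b;c;x)$. Since $D_{\alpha,\nu}$ is linear and the prefactor $\tfrac{(\alpha)_{m}}{m!}$ is a nonzero constant, it suffices to prove that $D_{\alpha,\nu}\,[w(1-\sigma)]=0$.

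Next I would invoke the fact that $w$ solves the Gauss equation
\[
x(1-x)w''(x)+\bigl(c-(a+b+1)x\bigr)w'(x)-ab\,w(x)=0,
\]
and then substitute $x=1-\sigma$. Setting $g(\sigma)=w(1-\sigma)$, I have $g'(\sigma)=-w'(1-\sigma)$ and $g''(\sigma)=w''(1-\sigma)$, while $x(1-x)=\sigma(1-\sigma)$ is invariant under $x\leftrightarrow\sigma$. Hence the second-order term transfers verbatim, and the Gauss equation becomes
\[
\sigma(1-\sigma)g''(\sigma)-\bigl(c-(a+b+1)(1-\sigma)\bigr)g'(\sigma)-ab\,g(\sigma)=0.
\]

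Finally I would check that the coefficients agree with those of $D_{\alpha,\nu}$. The zeroth-order term is immediate: $-ab=m\bigl(\tfrac{1}{2}(\alpha+1)+i\nu\bigr)$. For the first-order term I would use $a+b+1=-m+\tfrac{\alpha}{2}+\tfrac{3}{2}+i\nu$ together with $c=\alpha$ to expand
\[
-\bigl(c-(a+b+1)(1-\sigma)\bigr)=\Bigl(-m+\tfrac{3}{2}+i\nu\Bigr)(1-\sigma)+\tfrac{\alpha}{2}(1-\sigma)-\alpha,
\]
and then apply the simplification $\tfrac{\alpha}{2}(1-\sigma)-\alpha=-\tfrac{\alpha}{2}(1+\sigma)$ to recognise exactly the coefficient of $\partial_{\sigma}$ appearing in $D_{\alpha,\nu}$. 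This yields $D_{\alpha,\nu}g=0$, and hence $D_{\alpha,\nu}\phi_{m}^{(\alpha,\nu)}=0$.

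There is no serious obstacle here: the whole argument is the change of variable $x=1-\sigma$ fed into the well-known hypergeometric equation. The only point requiring genuine care is the sign bookkeeping produced by $\partial_{\sigma}=-\partial_{x}$ in the drift term, combined with the algebraic rearrangement $\tfrac{\alpha}{2}(1-\sigma)-\alpha=-\tfrac{\alpha}{2}(1+\sigma)$; getting these two details right is precisely what makes the first-order coefficient land on the stated form rather than on a sign-flipped variant.
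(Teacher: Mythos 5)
Your proposal is correct, and it is precisely the argument the paper intends: the paper offers no written proof beyond the remark that the result follows ``from the differential equation of ${_{2}F_1}$,'' and your substitution $x=1-\sigma$ into the Gauss equation with $a=-m$, $b=\tfrac{1}{2}(\alpha+1)+i\nu$, $c=\alpha$ fills in exactly that computation. The sign bookkeeping and the rearrangement $\tfrac{\alpha}{2}(1-\sigma)-\alpha=-\tfrac{\alpha}{2}(1+\sigma)$ both check out, so the coefficients match the stated operator $D_{\alpha,\nu}$.
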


\section{Concluding remarks}
We have investigated the fundamental properties of MCJ polynomials, that is, orthogonality and the generating function etc. 
%On the other hand, though we obtain the pseudo-differential equation for $\Psi_{\mathbf{m}}^{(\alpha,\nu)}$, we have not succeeded in getting a differential equation for $\phi_{\mathbf{m}}^{(\alpha,\nu)}$. 
However, as we have not succeeded in obtaining a differential equation for $\phi_{\mathbf{m}}^{(\alpha,\nu)}$ similar to Proposition\,\ref{thm:one variable differential}, 
we can not derive a modified Cayley transform of $\tr{\nabla_{u}^{-1}}$.

It is also important to consider the generalization of MCJ polynomials for multiplicity $d$. 
Actually, this generalization has been obtained by Baker and Forrester \cite{BF} for multivariate Laguerre polynomials which are a modified Fourier transform of the Cayley transform of MCJ polynomials. 
In addition, we can consider MCJ polynomials and their orthogonality without using the analysis on the symmetric cones as follows.

Let $n:=r+\frac{d}{2}r(r-1)$, 
\begin{align}
\label{eq:generalized d_{m}}
d_{\mathbf{m}}&:=%\prod_{1\leq p<q\leq r}\frac{m_{p}-m_{q}+\frac{d}{2}(q-p)}{\frac{d}{2}(q-p)}\frac{B\left(m_{p}-m_{q},\frac{d}{2}(q-p-1)+1\right)}{B\left(m_{p}-m_{q},\frac{d}{2}(q+1-p)\right)}, \nonumber \\
\prod_{j=1}^{r}\frac{\Gamma\left(\frac{d}{2}\right)}{\Gamma\left(\frac{d}{2}j\right)\Gamma\left(\frac{d}{2}(j-1)+1\right)} \nonumber \\
{} & \quad \cdot \prod_{1\leq p<q\leq r}\left(m_{p}-m_{q}+\frac{d}{2}(q-p)\right)\frac{\Gamma\left( m_{p}-m_{q}+\frac{d}{2}(q-p+1)\right)}{\Gamma\left(m_{p}-m_{q}+\frac{d}{2}(q-p-1)+1\right)}, \\
\Gamma_{\Omega}(\mathbf{s})&:=(2\pi)^{\frac{n-r}{2}}\prod_{j=1}^{r}\Gamma\left(s_{j}-\frac{d}{2}(j-1)\right), \nonumber \\
%\widetilde{c_{0}}&:=(2\pi)^{\frac{n-r}{2}}\prod_{j=1}^{r}\frac{\Gamma\left(\frac{d}{2}+1\right)}{\Gamma\left(\frac{d}{2}j+1\right)}, \nonumber \\  
(\mathbf{s})_{\mathbf{k}}&:=\prod_{j=1}^{r}\left(s_{j}-\frac{d}{2}(j-1)\right)_{k_{j}}. \nonumber
\end{align}
Further, $P_{\mathbf{k}}^{(\frac{2}{d})}(\lambda_{1},\ldots,\lambda_{r})$ is an $r$-variable Jack polynomial %(see \cite{M}, Chapter. VI.10) 
and 
\begin{equation}
\label{eq:generalized spherical poly}
\Phi_{\mathbf{k}}^{(d)}(\lambda_{1},\ldots,\lambda_{r}):=\frac{P_{\mathbf{k}}^{(\frac{2}{d})}(\lambda_{1},\ldots,\lambda_{r})}{P_{\mathbf{k}}^{(\frac{2}{d})}(1,\ldots,1)}.
\end{equation} 
Furthermore, we introduce the generalized (Jack) binomial coefficients based on \cite{OO} by 
% $$
% \frac{P_{\mathbf{m}}^{(\frac{2}{d})}(1+\lambda_{1},\ldots,1+\lambda_{r})}{P_{\mathbf{m}}^{(\frac{2}{d})}(1,\ldots,1)}
% =\sum_{\mathbf{k}\subset \mathbf{m}}\binom{\mathbf{m}}{\mathbf{k}}_{\frac{d}{2}}\frac{P_{\mathbf{k}}^{(\frac{2}{d})}(\lambda_{1},\ldots,\lambda_{r})}{P_{\mathbf{k}}^{(\frac{2}{d})}(1,\ldots,1)}.
% $$
$$
\Phi_{\mathbf{m}}^{(d)}(1+\lambda_{1},\ldots,1+\lambda_{r})
=\sum_{\mathbf{k}\subset \mathbf{m}}\binom{\mathbf{m}}{\mathbf{k}}_{\frac{d}{2}}\Phi_{\mathbf{k}}^{(d)}(\lambda_{1},\ldots,\lambda_{r}).
$$
\begin{dfn}
We define the generalized MCJ (GMCJ) polynomial as follows. 
\begin{align}
\phi_{\mathbf{m}}^{(d\,;\,\alpha, \nu)}(e^{i\theta})=\phi_{\mathbf{m}}^{(d\,;\,\alpha, \nu)}(e^{i\theta_{1}},\ldots,e^{i\theta_{r}})&:=d_{\mathbf{m}}\frac{(\alpha)_{\mathbf{m}}}{\left(\frac{n}{r}\right)_{\mathbf{m}}}
\sum_{\mathbf{k}\subset \mathbf{m}}(-1)^{|\mathbf{k}|}\binom{\mathbf{m}}{\mathbf{k}}_{\frac{d}{2}}\frac{\left(\frac{1}{2}\left(\alpha +\frac{n}{r}\right)+i\nu \right)_{\mathbf{k}}}{(\alpha)_{\mathbf{k}}} \nonumber \\
{} & \quad \cdot \Phi_{\mathbf{k}}^{(d)}(1-e^{i\theta_{1}},\ldots,1-e^{i\theta_{r}}).
\end{align}
\end{dfn}
Therefore, we present the following conjecture. 
\begin{conj}
\label{thm:conj MCJ}
If $\alpha >\frac{n}{r}-1,\, \nu \in \mathbb{R},\, d>0$, then 
% \begin{align}
% &\frac{\widetilde{c_{0}}}{(2\pi)^{n}}\int_{\mathcal{S}^{r}}
% \phi_{\mathbf{m}}^{(d)}(e^{i\theta};\alpha, \nu)\overline{\phi_{\mathbf{n}}^{(d)}(e^{i\theta};\alpha, \nu)} \nonumber \\
% {} & \quad \cdot \prod_{j=1}^{r}(1-e^{i\theta_{j}})^{\frac{1}{2}\left(\alpha -\frac{n}{r}\right)+i\nu}(1-e^{-i\theta_{j}})^{\frac{1}{2}\left(\alpha -\frac{n}{r}\right)-i\nu}
% \prod_{1\leq k<l\leq r}|e^{i\theta_{k}}-e^{i\theta_{l}}|^{d}d\theta_{1}\cdots d\theta_{r} \nonumber \\
% &=d_{\mathbf{m}}\frac{\Gamma_{\Omega}(\alpha+\mathbf{m})}{\left(\frac{n}{r}\right)_{\mathbf{m}}}\frac{1}{\left|\Gamma_{\Omega}\left(\frac{1}{2}\left(\alpha +\frac{n}{r}\right)+i\nu\right)\right|^{2}}\delta_{\mathbf{m}\mathbf{n}}.
% \end{align}
\begin{align}
&\frac{\widetilde{c_{0}}}{(2\pi)^{n}}\int_{\mathcal{S}^{r}}
\phi_{\mathbf{m}}^{(d\,;\,\alpha, \nu)}(e^{i\theta})\overline{\phi_{\mathbf{n}}^{(d\,;\,\alpha, \nu)}(e^{i\theta})}
\prod_{j=1}^{r}|(1-e^{i\theta_{j}})^{\frac{1}{2}\left(\alpha -\frac{n}{r}\right)+i\nu}|^{2}%(1-e^{-i\theta_{j}})^{\frac{1}{2}\left(\alpha -\frac{n}{r}\right)-i\nu}
\prod_{1\leq p<q\leq r}|e^{i\theta_{p}}-e^{i\theta_{q}}|^{d}\,d\theta_{1}\cdots d\theta_{r} \nonumber \\
&=d_{\mathbf{m}}\frac{\Gamma_{\Omega}(\alpha+\mathbf{m})}{\left(\frac{n}{r}\right)_{\mathbf{m}}}\frac{1}{\left|\Gamma_{\Omega}\left(\frac{1}{2}\left(\alpha +\frac{n}{r}\right)+i\nu\right)\right|^{2}}\delta_{\mathbf{m}\mathbf{n}}.
\end{align}
% Here, 
% \begin{align}
% \widetilde{c_{0}}:=(2\pi)^{\frac{n-r}{2}}\prod_{j=1}^{r}\frac{\Gamma\left(\frac{d}{2}+1\right)}{\Gamma\left(\frac{d}{2}j+1\right)},\,\,\,\,\,\, 
% \Gamma_{\Omega}(\mathbf{s}):=(2\pi)^{\frac{n-r}{2}}\prod_{j=1}^{r}\Gamma\left(s_{j}-\frac{d}{2}(j-1)\right). \nonumber 
% \end{align}
\end{conj}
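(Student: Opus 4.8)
The plan is to reduce the conjectured orthogonality to the $\beta=d$ multivariate Laguerre orthogonality of Baker and Forrester \cite{BF}, following the two-step unitary picture of Theorem~\ref{thm:main theorem1} but re-interpreting both transforms combinatorially, since for non-special $d$ there is no Euclidean Jordan algebra to supply the Fourier and Cayley isometries. All of the algebraic ingredients survive: the normalized Jack polynomials $\Phi_{\mathbf{k}}^{(d)}$, the quantities $d_{\mathbf{m}}$, $(\mathbf{s})_{\mathbf{k}}$ and the Okounkov--Olshanski binomial coefficients $\binom{\mathbf{m}}{\mathbf{k}}_{d/2}$ are defined for every $d>0$, and the purely formal identities used earlier — the Jack--Cauchy identity underlying Lemma~\ref{thm:Cauchy kernel of spherical poly}, the binomial expansion \eqref{eq:the definition of the generalized binomial coefficients}, and the generating function Lemma~\ref{thm:generating fnc of Laguerre and MP} — should continue to hold for all $d>0$ by the corresponding statements in Jack polynomial theory (\cite{M}, \cite{OO}), either by the same proofs or by analytic continuation in the Jack parameter $2/d$.

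First I would pass from the torus to $\mathbb{R}^r$ by the substitution $\lambda_j=-\cot(\theta_j/2)$ already used in the proof of Lemma~\ref{thm:int for Shilov boundary}(3); this is a genuine change of variables that requires no cone structure. Under it the circular Jacobi weight $\prod_j|(1-e^{i\theta_j})^{\frac12(\alpha-n/r)+i\nu}|^2\prod_{p<q}|e^{i\theta_p}-e^{i\theta_q}|^d$ becomes, up to explicit constant and Jacobian factors, a $\nu$-deformed generalized Cauchy $\beta$-ensemble weight $\prod_j(1+\lambda_j^2)^{-\frac12(\alpha+n/r)}\prod_{p<q}|\lambda_p-\lambda_q|^d$, and $\phi_{\mathbf{m}}^{(d;\alpha,\nu)}$ is carried to the rational function $\widetilde{\Psi_{\mathbf{m}}^{(\alpha,\nu)}}$ appearing in the definition of $\Psi_{\mathbf{m}}^{(\alpha,\nu)}$. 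Thus the conjectured integral is equivalent to the orthogonality of the family $\{\Psi_{\mathbf{m}}^{(\alpha,\nu)}\}$ on $\mathbb{R}^r$ against this $\beta$-ensemble, which is the statement I would try to establish.

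The crux is then a Plancherel-type isometry for general $d$: in the symmetric-cone case this was exactly Lemma~\ref{thm:Plancherel,[FK] Thm 9.4.1}, derived from the Faraut--Koranyi Plancherel theorem, and it is precisely this analytic input that has no obvious substitute when $d\notin\{1,2,4\}$ (or $d=8$, $r=3$). I expect this to be the main obstacle. The natural replacement is a multivariate Hankel/Fourier transform whose isometry is equivalent to a Selberg--Jack integral evaluation of Kadell--Kaneko type: concretely one wants the map sending the $\beta$-Laguerre polynomials $\psi_{\mathbf{m}}^{(\alpha)}$ (orthogonal for all $d>0$ by \cite{BF}) to the $\Psi_{\mathbf{m}}^{(\alpha,\nu)}$ to preserve inner products, and this reduces, term by term via the Jack--Cauchy identity, to evaluating integrals of a single Jack polynomial against the $\beta$-ensemble weight. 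Granting the isometry, the off-diagonal vanishing is automatic and the diagonal norm is computed from $\|\psi_{\mathbf{m}}^{(\alpha)}\|_{\alpha,\Omega}^2=d_{\mathbf{m}}(\alpha)_{\mathbf{m}}/(n/r)_{\mathbf{m}}$ together with the Selberg--Jack normalization, yielding the right-hand side $d_{\mathbf{m}}\,\Gamma_\Omega(\alpha+\mathbf{m})/(n/r)_{\mathbf{m}}\cdot|\Gamma_\Omega(\tfrac12(\alpha+n/r)+i\nu)|^{-2}$.

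Finally, as an independent check or a possible alternative route, I would study the dependence on $d$. Both sides are real-analytic in $d>0$ for fixed $\mathbf{m},\mathbf{n},\alpha,\nu,r$ (the factor $\prod_{p<q}|e^{i\theta_p}-e^{i\theta_q}|^d$ is analytic in $d$ and the Jack data are analytic in $2/d$), and they are known to coincide at $d=1,2,4$ and, for $r=3$, at $d=8$. Agreement at these finitely many points is of course not sufficient on its own, so this would have to be combined with a rigidity statement — for instance controlled growth in $d$ turning the difference into an entire function with prescribed zeros — which I do not see how to supply directly. For this reason I regard the Selberg--Jack/Plancherel route above as the more promising one.
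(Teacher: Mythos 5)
Be aware first of what the paper actually does with this statement: it is stated as a \emph{conjecture}, and the paper proves it only in special cases, namely (i) $d=1,2,4$ for general $r$, $r=2$ with $d\in\mathbb{Z}_{>0}$, and $r=3$, $d=8$ — exactly the multiplicities realized by irreducible symmetric cones, where Theorem~\ref{thm:main theorem1} applies verbatim — and (ii) the degenerate point $\alpha=\frac{n}{r}$, $\nu=0$, where $\phi_{\mathbf{m}}^{(d;\alpha,\nu)}$ collapses to a normalized Jack polynomial and the known torus orthogonality of Jack polynomials, together with the evaluation formulas (6.4) of Okounkov--Olshanski and (10.38) of Macdonald, gives the norm. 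The author states explicitly that the method of the paper cannot be applied to the general case. So any complete blind proof would have to go beyond the paper; yours does not.

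Your first two steps are sound and parallel the paper: the substitution $\lambda_j=-\cot(\theta_j/2)$ is a genuine change of variables requiring no cone structure, and it does carry the circular Jacobi weight to a $\nu$-twisted Cauchy-type $\beta$-ensemble on $\mathbb{R}^r$ and $\phi_{\mathbf{m}}^{(d;\alpha,\nu)}$ to $\widetilde{\Psi_{\mathbf{m}}^{(\alpha,\nu)}}$ up to the prefactor. The gap is the step you yourself flag as the crux: for $d$ outside the symmetric-cone list there is no modified Fourier transform whose unitarity (the analogue of Lemma~\ref{thm:Plancherel,[FK] Thm 9.4.1}, which in the paper comes from the Faraut--Koranyi Plancherel theorem) would intertwine the Baker--Forrester $\beta$-Laguerre orthogonality with the desired orthogonality of $\{\Psi_{\mathbf{m}}^{(\alpha,\nu)}\}$. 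You propose to replace it by a ``Selberg--Jack'' evaluation of integrals of a single Jack polynomial against the Cauchy-type weight with the extra phase $\nu$, but no such closed-form evaluation is cited or derived, and it is not a routine consequence of the Kadell--Kaneko integrals (which concern the Laguerre/Jacobi weights, not this two-sided complex-exponent Cauchy weight). The fallback via analytic continuation in $d$ also fails as you note: agreement at finitely many multiplicities does not determine a real-analytic function of $d$ without a rigidity input you do not supply. So the proposal is a reasonable research program — arguably the natural one — but it does not constitute a proof, and its missing ingredient is precisely the obstruction that leaves the statement a conjecture in the paper.
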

For some special case, we prove the conjecture. 
\begin{prop}
{\rm{(1)}}\,If $d=1,2,4$ or $r=2, d \in \mathbb{Z}_{> 0}$ or $r=3, d=8$, then this conjecture is true.

\noindent
{\rm{(2)}}\,The case of $\alpha =\frac{n}{r}$ and $\nu =0$ is also true.  
\end{prop}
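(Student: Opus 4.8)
The plan is to handle the two parts by entirely different mechanisms, which reflects their nature. Part~(1) lists exactly those pairs $(r,d)$ for which a genuine irreducible symmetric cone exists, so the conjecture there is not new: it is the orthogonality relation already proved in \Thm{main theorem1}. Part~(2) is instead a degeneration that survives for every $d>0$, because at $\alpha=\frac{n}{r},\nu=0$ the polynomial collapses onto a single Jack polynomial, reducing the claim to the classical orthogonality of Jack polynomials on the torus.

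For part~(1) I would first recall the classification from the beginning of Section~2: the cones $\Pi_r(\mathbb{R}),\Pi_r(\mathbb{C}),\Pi_r(\mathbb{H})$ realize $d=1,2,4$ for arbitrary $r$; the Lorentz cones $\Lambda_r$ form the rank-$2$ family and realize every $d\in\mathbb{Z}_{>0}$; and $\Pi_3(\mathbb{O})$ realizes $r=3,d=8$. Thus the three cases enumerated in~(1) are precisely the pairs $(r,d)$ for which some simple Euclidean Jordan algebra $V$ exists with these invariants. For such $V$ I would then check that every symbol occurring in Conjecture~\ref{thm:conj MCJ} coincides with its cone-theoretic counterpart from Section~2: the combinatorial $d_{\mathbf{m}}$ is the dimension formula of Lemma~\ref{thm:Upmeier Lem 2.6}, the function $\Phi_{\mathbf{k}}^{(d)}$ of (\ref{eq:generalized spherical poly}) is the Jack-polynomial normalization of the spherical polynomial used in Section~2 (see (\ref{eq:spherical and schur}) for $d=2$), while $\Gamma_\Omega$, $(\mathbf{s})_{\mathbf{k}}$ and $\binom{\mathbf{m}}{\mathbf{k}}_{\frac{d}{2}}$ agree with (\ref{eq:def of gamma on cone}) and (\ref{eq:the definition of the generalized binomial coefficients}). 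Consequently $\phi_{\mathbf{m}}^{(d\,;\,\alpha,\nu)}(e^{i\theta})$ is nothing but $\phi_{\mathbf{m}}^{(\alpha,\nu)}$ of (\ref{eq:def of MCJ}) written in the coordinate $\sigma=e^{i\theta}$, and the asserted identity is exactly the orthogonality relation (\ref{eq:MCJ orthogonal}) of \Thm{main theorem1}. No additional computation is required.

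For part~(2) no cone is assumed and the argument must be intrinsic. I would first establish the degeneration $\phi_{\mathbf{m}}^{(d\,;\,\frac{n}{r},0)}(e^{i\theta})=d_{\mathbf{m}}\,\Phi_{\mathbf{m}}^{(d)}(e^{i\theta})$. Setting $\alpha=\frac{n}{r},\nu=0$ makes both ratios $\frac{(\alpha)_{\mathbf{m}}}{(\frac{n}{r})_{\mathbf{m}}}$ and $\frac{(\frac{1}{2}(\alpha+\frac{n}{r})+i\nu)_{\mathbf{k}}}{(\alpha)_{\mathbf{k}}}$ equal to $1$, leaving $d_{\mathbf{m}}\sum_{\mathbf{k}\subset\mathbf{m}}(-1)^{|\mathbf{k}|}\binom{\mathbf{m}}{\mathbf{k}}_{\frac{d}{2}}\Phi_{\mathbf{k}}^{(d)}(1-e^{i\theta})$; using the homogeneity of $\Phi_{\mathbf{k}}^{(d)}$ to write $(-1)^{|\mathbf{k}|}\Phi_{\mathbf{k}}^{(d)}(1-e^{i\theta})=\Phi_{\mathbf{k}}^{(d)}(e^{i\theta}-1)$ and then applying the binomial expansion that defines $\binom{\mathbf{m}}{\mathbf{k}}_{\frac{d}{2}}$ collapses the sum to $d_{\mathbf{m}}\Phi_{\mathbf{m}}^{(d)}(e^{i\theta})$, exactly as recorded in item~(2) of the Remark following \Thm{main theorem1}. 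The conjectural integral then becomes, up to the explicit scalar $\frac{\widetilde{c_0}}{(2\pi)^n}d_{\mathbf{m}}d_{\mathbf{n}}$, the pairing of two normalized Jack polynomials $\Phi_{\mathbf{m}}^{(d)}=P_{\mathbf{m}}^{(\frac{2}{d})}/P_{\mathbf{m}}^{(\frac{2}{d})}(1,\ldots,1)$ against the circular $\beta$-ensemble weight $\prod_{p<q}|e^{i\theta_p}-e^{i\theta_q}|^{d}$ with $\beta=d$. I would invoke the classical orthogonality of Jack polynomials for this density, valid for all $d>0$ (cf.\ \cite{BF}), to obtain vanishing when $\mathbf{m}\neq\mathbf{n}$, and the Jack norm formula together with the Morris evaluation of the total mass at $\mathbf{m}=\mathbf{0}$ to identify the diagonal value.

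The main obstacle lies wholly in part~(2), and it is the bookkeeping of the normalization rather than orthogonality itself: to land on precisely $d_{\mathbf{m}}\,\Gamma_\Omega(\frac{n}{r})^{-1}\delta_{\mathbf{m}\mathbf{n}}$ one must reconcile the three separately defined constants $\widetilde{c_0}$, the combinatorial $d_{\mathbf{m}}$, and $\Gamma_\Omega(\frac{n}{r})$ with the Selberg-type constant produced by the Jack norm. Since all of these are products of Gamma factors, this reduces to a finite Gamma-function identity rather than anything structural. I do not expect this route to reach general $(\alpha,\nu)$: once $\alpha\neq\frac{n}{r}$ the polynomial is a genuine linear combination of Jack polynomials and the weight becomes the full circular Jacobi ensemble $\prod_j|(1-e^{i\theta_j})^{\frac{1}{2}(\alpha-\frac{n}{r})+i\nu}|^{2}\prod_{p<q}|e^{i\theta_p}-e^{i\theta_q}|^{d}$, whose orthogonality off the cone cases is exactly the open content of Conjecture~\ref{thm:conj MCJ}.
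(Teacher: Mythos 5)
Your proposal is correct and follows essentially the same route as the paper: part (1) is reduced to Theorem \ref{thm:main theorem1} via the classification of irreducible symmetric cones, and part (2) proceeds by collapsing $\phi_{\mathbf{m}}^{(d;\frac{n}{r},0)}$ to $d_{\mathbf{m}}P_{\mathbf{m}}^{(2/d)}/P_{\mathbf{m}}^{(2/d)}(1,\ldots,1)$ via the binomial expansion and then invoking the torus orthogonality of Jack polynomials together with the evaluation formula (6.4) of \cite{OO} and the norm formula (10.38) of \cite{M}, exactly as the paper does. The remaining Gamma-factor bookkeeping you flag is precisely the short computation the paper carries out to match $\widetilde{c_0}$, $d_{\mathbf{m}}$ and $\Gamma_{\Omega}(\frac{n}{r})$.
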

\begin{proof}
{\rm{(1)}}\,It follows immediately from Theorem\,\ref{thm:main theorem1} and the classification of irreducible symmetric cones. %(see \cite{FK} Chapter\,V). 

\noindent
{\rm{(2)}}\,We remark that when $\alpha =\frac{n}{r},\nu =0$, 
%$\phi_{\mathbf{m}}^{(d)}(e^{i\theta_{1}},\ldots,e^{i\theta_{r}};\alpha, \nu)$ degenerates to the usual Jack polynomial $d_{\mathbf{m}}P_{\mathbf{k}}^{(\frac{2}{d})}(e^{i\theta_{1}},\ldots,e^{i\theta_{r}})$. 
\begin{align}
\phi_{\mathbf{m}}^{\left(d\,;\,\frac{n}{r},0\right)}\left(e^{i\theta}\right)
&=d_{\mathbf{m}}\sum_{\mathbf{k}\subset \mathbf{m}}(-1)^{|\mathbf{k}|}\binom{\mathbf{m}}{\mathbf{k}}_{\frac{d}{2}}
\frac{P_{\mathbf{k}}^{(\frac{2}{d})}(1-e^{i\theta_{1}},\ldots,1-e^{i\theta_{r}})}{P_{\mathbf{k}}^{(\frac{2}{d})}(1,\ldots,1)} \nonumber \\
\label{eq:degenerate to Jack}
&=d_{\mathbf{m}}\frac{P_{\mathbf{m}}^{(\frac{2}{d})}(e^{i\theta_{1}},\ldots,e^{i\theta_{r}})}{P_{\mathbf{m}}^{(\frac{2}{d})}(1,\ldots,1)}.
\end{align}
For the Jack polynomial, the following formulas are known (see (6.4) in \cite{OO} and (10.38) in \cite{M} respectively). 
\begin{align}
\label{eq:Jack 1}
P_{\mathbf{m}}^{\left(\frac{2}{d}\right)}(1,\ldots,1)
&=\prod_{j=1}^{r}\frac{\Gamma\left(\frac{d}{2}\right)}{\Gamma\left(\frac{d}{2}j\right)}
\prod_{1\leq p<q\leq r}\frac{\Gamma\left(m_{p}-m_{q}+\frac{d}{2}(q-p+1)\right)}{\Gamma\left(m_{p}-m_{q}+\frac{d}{2}(q-p)\right)}, \\
\|P_{\mathbf{m}}^{\left(\frac{2}{d}\right)}\|_{r,\frac{2}{d}}^{2}
&:=\frac{1}{(2\pi)^{r}}\frac{1}{r!}\int_{\mathcal{S}^{r}}|P_{\mathbf{m}}^{\left(\frac{2}{d}\right)}(e^{i\theta_{1}},\ldots,e^{i\theta_{r}})|^{2}
\prod_{1\leq p<q\leq r}|e^{i\theta_{p}}-e^{i\theta_{q}}|^{d}\,d\theta_{1}\cdots d\theta_{r} \nonumber \\
\label{eq:norm of the Jack}
&=\prod_{1\leq p<q\leq r}\frac{\Gamma\left(m_{p}-m_{q}+\frac{d}{2}(q-p+1)\right)\Gamma\left(m_{p}-m_{q}+\frac{d}{2}(q-p-1)+1\right)}
{\Gamma\left(m_{p}-m_{q}+\frac{d}{2}(q-p)\right)\Gamma\left(m_{p}-m_{q}+\frac{d}{2}(q-p)+1)\right)}. %\\
%&=\frac{1}{d_{\mathbf{m}}}\frac{\Gamma\left(\frac{d}{2}r\right)}{\Gamma\left(\frac{d}{2}\right)^{r}}P_{\mathbf{m}}^{\left(\frac{2}{d}\right)}(1,\ldots,1)^{2}.
\end{align}
Hence, from (\ref{eq:generalized d_{m}}), (\ref{eq:Jack 1}) and (\ref{eq:norm of the Jack}), we have 
\begin{equation}
d_{\mathbf{m}}\|P_{\mathbf{m}}^{\left(\frac{2}{d}\right)}\|_{r,\frac{2}{d}}^{2}
=\left\{\prod_{j=1}^{r}\frac{1}{\Gamma\left(\frac{d}{2}(j-1)+1\right)}\frac{\Gamma\left(\frac{d}{2}j\right)}{\Gamma\left(\frac{d}{2}\right)}\right\}
P_{\mathbf{m}}^{\left(\frac{2}{d}\right)}(1,\ldots,1)^{2}.
\end{equation}
Therefore, by (\ref{eq:degenerate to Jack}) and the orthogonality of the Jack polynomial, we obtain
\begin{align}
&\frac{\widetilde{c_{0}}}{(2\pi)^{n}}\int_{\mathcal{S}^{r}}
\phi_{\mathbf{m}}^{\left(d\,;\,\frac{n}{r},0\right)}\left(e^{i\theta}\right)\overline{\phi_{\mathbf{n}}^{\left(d\,;\,\frac{n}{r},0\right)}\left(e^{i\theta}\right)}
\prod_{1\leq p<q\leq r}|e^{i\theta_{p}}-e^{i\theta_{q}}|^{d}\,d\theta_{1}\cdots d\theta_{r} \nonumber \\
&=\frac{1}{(2\pi)^{n}}\frac{(2\pi)^{\frac{n-r}{2}}}{r!}
\left\{\prod_{j=1}^{r}\frac{\Gamma\left(\frac{d}{2}\right)}{\Gamma\left(\frac{d}{2}j\right)}\right\}
\frac{d_{\mathbf{m}}^{2}}{P_{\mathbf{m}}^{\left(\frac{2}{d}\right)}(1,\ldots,1)^{2}}(2\pi)^{r}r!\|P_{\mathbf{m}}^{\left(\frac{2}{d}\right)}\|_{r,\frac{2}{d}}^{2}\delta_{\mathbf{m}\mathbf{n}}  \nonumber \\
&=\frac{d_{\mathbf{m}}}{(2\pi)^{\frac{n-r}{2}}}\prod_{j=1}^{r-1}\frac{1}{\Gamma\left(1+\frac{d}{2}(r-j)\right)}\delta_{\mathbf{m}\mathbf{n}}
=d_{\mathbf{m}}\frac{1}{\Gamma_{\Omega}\left(\frac{n}{r}\right)}\delta_{\mathbf{m}\mathbf{n}}. \nonumber
\end{align}
\end{proof}
To summarize, we draw the following diagram of GMCJ polynomial. 
\begin{align}
\begin{array}{ccccc}
& &{\text{{\bf{$\alpha ,\nu$-deform}}}} & & \\
&{\text{
\framebox[4.5cm][c]{Spherical poly\,:\,$\Phi_{\mathbf{m}}^{}$}}} & \Longrightarrow 
& {\text{
\framebox[4.5cm][c]{{\rm{(1)}}\,MCJ poly\,:\,$\phi_{\mathbf{m}}^{(\alpha, \nu)}$}}} &\\
& & & & \\
{\text{{\bf{$d$-deform}}}}&\Downarrow & & \Downarrow &\\
 % & &{\text{$\alpha ,\nu$-deformations}} & & \\
& & & &\\
&{\text{
\framebox[4.5cm][c]{{\rm{(2)}}\,Jack poly\,:\,$P_{\mathbf{m}}^{(\frac{2}{d})}$}}} & \Longrightarrow 
& {\text{
\framebox[4.5cm][c]{GMCJ poly\,:\,$\phi_{\mathbf{m}}^{(d\,;\,\alpha, \nu)}$}}} &\\
& & & &\\
{\text{{\bf{$q,t$-deform}}}}&\Downarrow & & \Downarrow &\\
 % & &{\text{$\alpha ,\nu$-deformations?}} & & \\
& & & & \\
&\,\,{\text{
\framebox[4.5cm][c]{Macdonald poly\,:\,$P_{\mathbf{m}}^{(q,t)}$}}} & \Longrightarrow & \framebox[4.5cm][c]{{\text{unknown poly\,:\,?}}} &\nonumber 
\end{array}
\end{align}

% \begin{align}
% \begin{array}{ccccc}
% & &{\text{{\bf{$\alpha ,\nu$-deform}}}} & & \\
% &{\text{
% \fbox{\,\,\,\,\,Spherical poly\,:\,$\Phi_{\mathbf{m}}$\,\,\,\,}}} & \Longrightarrow 
% & {\text{
% \fbox{\,\,\,\,\,MCJ poly\,:\,$\phi_{\mathbf{m}}^{(\alpha, \nu)}$\,\,\,\,\,}}} &\\
% & & & & \\
% {\text{{\bf{$d$-deform}}}}&\Downarrow & & \Downarrow &\\
%  % & &{\text{$\alpha ,\nu$-deformations}} & & \\
% & & & &\\
% &{\text{
% \fbox{\,\,\,\,\,\,\, Jack poly\,:\,$P_{\mathbf{m}}^{(\frac{2}{d})}$ \,\,\,\,\,\,\,}}} & \Longrightarrow 
% & {\text{
% \fbox{\,GMCJ poly\,:\,$\phi_{\mathbf{m}}^{(d\,;\,\alpha, \nu)}$}}} &\\
% & & & &\\
% {\text{{\bf{$q,t$-deform}}}}&\Downarrow & & \Downarrow &\\
%  % & &{\text{$\alpha ,\nu$-deformations?}} & & \\
% & & & & \\
% &\,\,{\text{
% \fbox{Macdonald poly\,:\,$P_{\mathbf{m}}^{(q,t)}$}}} & \Longrightarrow & \fbox{\,\, {\text{unknown poly\,:\,?}}\quad} &\nonumber 
% \end{array}
% \end{align}

Although a proof of a general case would be desirable, 
%we think by our method we may not be able to prove the statement.
%we think this one cannot be proved by using the analysis on the symmetric cones, so that our method is not able to apply under . 
our method in this paper cannot be applied to a general case. %this deformed case. 
It may be necessary to consider a method of quantum integrable systems, %which are the standard theory for multivariate special orthogonal polynomials, % to prove this one, 
that is, to construct some commuting families of differential or pseudo-differential operators whose simultaneous eigenfunctions become GMCJ polynomials. 
Actually, for the multivariate Laguerre polynomials, there exist a commuting family of some differential operators $D_{1},\ldots,D_{r}$ such that 
$$
D_{k}\psi_{\mathbf{m}}^{(\alpha)}(u)=\lambda_{k,\mathbf{m}}\psi_{\mathbf{m}}^{(\alpha)}(u),
$$ 
for all $k=1,\ldots,r$. 
Hence, if we put $\widetilde{D_{k}}:=(\mathcal{C}_{\alpha,\nu}^{-1}\circ\mathcal{F}_{\alpha,\nu}^{-1})\circ D_{k}\circ (\mathcal{F}_{\alpha,\nu}\circ \mathcal{C}_{\alpha,\nu})$, then these (pseudo?) differential operators $\widetilde{D_{k}}$ commute with each other and for all $k=1,\ldots,r$, 
$$
\widetilde{D_{k}}\phi_{\mathbf{m}}^{(d\,;\,\alpha, \nu)}(\sigma)=\lambda_{k,\mathbf{m}}\phi_{\mathbf{m}}^{(d\,;\,\alpha, \nu)}(\sigma). 
$$
That is to say, MCJ polynomials give a new quantum integrable system. 
Hence, although we are not aware of any studies of constructions of commuting families of pseudo-differential operators thus far for GMCJ polynomials, 
%However, we may need to consider this problem for proving the above conjecture. 
we think our conjecture is likely to be an important target of investigations into quantum integrable systems.

Therefore, we give the more problems for GMCJ polynomials, which are very related to the above conjecture.  
First, we remark commuting families of differential operators for the Jack polynomials are given by using a degenerate double affine Hecke algebra (DDAHA) of type A.  
Moreover, since GMCJ polynomials are 2-parameter deformation of the Jack polynomials, we present the following problem naturally. 
\begin{prob}
Discover a 2-parameter deformation of the DDAHA, which need to construct commuting families of pseudo-differential operators whose simultaneous eigenfunctions become GMCJ polynomials. 
\end{prob}
\noindent

\noindent
Next, we recall the following result for Jack polynomials. 
\begin{thm}
{\rm{(1)}}\,The singular vectors of the Virasoro algebra are Jack polynomials with rectangular partitions \cite{MY}.\\
\noindent
{\rm{(2)}}\,The singular vectors of the $\mathcal{W}$ algebra are Jack polynomials with arbitrary partitions \cite{AMOS}. 
\end{thm}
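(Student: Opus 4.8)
The plan is to reproduce the free-field (bosonic) arguments of Mimachi--Yamada \cite{MY} for part (1) and of Awata--Matsuo--Odake--Shiraishi \cite{AMOS} for part (2); in both cases the route passes through the Fock-space realization of the chiral algebra and the canonical isomorphism between that Fock space and the ring of symmetric functions, under which the Jack polynomials appear.

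For the Virasoro case I would first set up the Feigin--Fuchs bosonization: introduce a Heisenberg algebra $\{a_n\}$ with $[a_m,a_n]=m\delta_{m+n,0}$ and a background charge, and realize the generators $L_n$ as normal-ordered quadratics in the $a_n$. Writing the central charge as $c=1-6(\sqrt{\beta}-1/\sqrt{\beta})^2$ introduces the coupling $\beta$, which will determine the Jack parameter. The Fock module built on a highest-weight vector of conformal weight $h_{r,s}$ is identified with the ring of symmetric functions $\Lambda$ via $a_{-n}\mapsto$ a $\beta$-dependent multiple of the power sum $p_n$, and under this identification the singular vector at level $rs$ becomes a distinguished homogeneous symmetric polynomial. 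Since $L_1$ and $L_2$ generate the positive Virasoro subalgebra, this vector is precisely the homogeneous element annihilated by the images of $L_1$ and $L_2$ acting on $\Lambda$ as $\beta$-deformed operators; the key step is to show that these two annihilation conditions force the polynomial to be a simultaneous eigenfunction of the Calogero--Sutherland (Sekiguchi--Debiard) commuting Hamiltonians, whose unique triangular symmetric eigenbasis is the Jack basis. Matching the level $rs$ and the weight $h_{r,s}$ then pins the shape to the rectangle $\lambda=(s^r)$. Equivalently one may use the screening construction, applying $r$ screening currents $\oint V(z)\,dz$ to the highest-weight state and recognizing the resulting Dotsenko--Fateev contour integral as the Selberg-type integral representation of the rectangular Jack polynomial.

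For part (2) the strategy is parallel but uses the $\mathcal{W}_N$ free-field realization with $N-1$ bosons and the associated screening operators $S_1,\dots,S_{N-1}$. Applying products of these screenings to the highest-weight vector produces singular vectors, and because several independent screening types are now available the admissible shapes are no longer restricted to rectangles but exhaust arbitrary partitions $\lambda$; the identification of the higher $\mathcal{W}$-zero-modes with the commuting Calogero--Sutherland Hamiltonians then shows each singular vector is the Jack polynomial $P_\lambda$ of the corresponding parameter. The main obstacle in both parts is exactly this identification step: one must verify that, under the boson--symmetric-function dictionary with its $\beta$-dependent normalization, the singular-vector equations coincide with the defining eigenvalue equations of the Jack polynomials (and not of some other deformation) and that the eigenvalues determine the partition uniquely. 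The screening route replaces this spectral bookkeeping by an explicit Selberg-integral evaluation. As both statements are established in \cite{MY} and \cite{AMOS} respectively, I would cite those results directly and reproduce only the bosonization and the eigenfunction identification to the degree needed for self-containedness.
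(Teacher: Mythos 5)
The paper offers no proof of this theorem at all: it appears in the concluding remarks purely as a recollection of known results, with the citations \cite{MY} and \cite{AMOS} serving as the entire justification, in order to motivate the subsequent problems on $2$-parameter deformations of the Virasoro and $\mathcal{W}$ algebras. Your sketch of the Feigin--Fuchs bosonization, the boson--symmetric-function dictionary, and the screening/Calogero--Sutherland identification is a faithful outline of how those references actually prove the statements, and since you too ultimately defer to citing \cite{MY} and \cite{AMOS}, your proposal is consistent with (indeed strictly more detailed than) what the paper does.
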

\noindent
From this theorem, 
%and more general result for a quantum deformation of the Virasoro algebra and the Macdonald polynomials which are $q,t$-deformation of the Jack polynomials (see \cite{SKAO}), 
we can also think the following problem. 
\begin{prob}
{\rm{(1)}}\,Discover a 2-parameter deformation of the Virasoro algebra whose singular vectors become the GMCJ polynomials with rectangular partitions.\\
\noindent
{\rm{(2)}}\,Discover a 2-parameter deformation of the $\mathcal{W}$ algebra whose singular vectors become the GMCJ polynomials with arbitrary partitions.
%Moreover, are there any 2-parameter deformations of the W
\end{prob}

Finally, we would like to raise the issue of applications for MCJ polynomials. 
In particular, since the weight function of the orthogonality relation for MCJ polynomials coincides with a circular Jacobi ensemble, 
we expect an application to the random matrix model whose density function is a circular Jacobi ensemble. 
However, further details on this are goals of future work.

\bibliographystyle{amsplain}

\noindent Institute of Mathematics for Industry, Kyushu University\\
744, Motooka, Nishi-ku, Fukuoka, 819-0395, JAPAN.\\
E-mail: g-shibukawa@math.kyushu-u.ac.jp

\end{document}